\newtheorem{theorem}{Theorem}[section]
\newtheorem{corollary}[theorem]{Corollary}
\newtheorem{definition}[theorem]{Definition}
\newtheorem{proposition}[theorem]{Proposition}
\newtheorem{lemma}[theorem]{Lemma}
\theoremstyle{remark}
\newtheorem{remark}[theorem]{Remark}
\newcommand{\Sym}{\mathrm{Sym}}
\newcommand{\GL}{\mathrm{GL}}
\newcommand{\re}{\mathrm{Re}}
\newcommand{\R}{\mathbb{R}}
\newcommand{\CC}{\mathbb{C}}
\renewcommand{\bar}{\overline}
\renewcommand{\tilde}{\widetilde}
\newcommand{\fF}{\mathfrak{F}}
\newcommand{\fA}{\mathfrak{A}}
\numberwithin{equation}{section}
\title[Landau--Siegel zeros of Rankin--Selberg $L$-functions]{Landau--Siegel zeros of Rankin--Selberg $L$-functions}
\author{Jesse Thorner}
\address{Department of Mathematics, University of Illinois, Urbana, IL 61801, USA}
\email{\href{mailto:jesse.thorner@gmail.com}{jesse.thorner@gmail.com}}
\author{Shifan Zhao}
\address{Department of Mathematics, The Ohio State University, Columbus, OH 43210, USA}
\email{\href{mailto:zhao.3326@buckeyemail.osu.edu}{zhao.3326@buckeyemail.osu.edu}}
\begin{document}

\begin{abstract}
We establish standard zero-free regions with no exceptional Landau--Siegel zeros for Rankin--Selberg $L$-functions and triple product $L$-functions in several new families for which modularity is not yet known.
\end{abstract}

\maketitle

\section{Introduction and statement of the main results}
\label{sec:intro}

Let $F$ be a number field, $\mathbb{A}_F$ the ring of adeles over $F$, and $\mathfrak{F}_{n}$ the set of cuspidal automorphic representations $\pi$ of $\mathrm{GL}_{n}(\mathbb{A}_F)$.  Let $\mathfrak{C}_{\pi}$ be the analytic conductor of $\pi$ (see \eqref{eqn:AC_def}), which captures the arithmetic and spectral complexity of $\pi$.  Let $L(s,\pi)$ be the standard $L$-function, $\widetilde{\pi}\in\mathfrak{F}_{n}$ the contragredient, and $\omega_{\pi}$ the central character of $\pi$ (which we normalize to be unitary).  The generalized Riemann hypothesis (GRH) asserts that if $\pi\in\mathfrak{F}_{n}$ and $\mathrm{Re}(s)>\frac{1}{2}$, then $L(s,\pi)\neq 0$.  Jacquet and Shalika~\cite{JS} proved that if $\mathrm{Re}(s)\geq 1$, then $L(s,\pi)\neq 0$, extending classical work on the Riemann zeta function.  Let $|\cdot|$ denote the idelic norm.  Replacing $\pi$ with $\pi\otimes|\cdot|^{it}$ and varying $t\in\R$, we find that it is equivalent to prove that if $\pi\in\mathfrak{F}_{n}$ and $\sigma\geq 1$, then $L(\sigma,\pi)\neq 0$.

In the absence of GRH, it is important for arithmetic applications that the zero-free region of $L(s,\pi)$ to have strong uniform dependence on $\mathfrak{C}_{\pi}$.  Classical techniques (e.g.,~\cite{Wattanawanichkul}) show that there is an absolute constant $\Cl[abcon]{ZFR1}>0$ such that $L(\sigma,\pi)$ has at most one real exceptional zero in the interval $\sigma\geq  1-\Cr{ZFR1}/(n\log \mathfrak{C}_{\pi})$.  This exceptional zero, which can only exist when $\pi=\tilde{\pi}$ (i.e., $\pi$ is {\it self-dual}), might be very close to $s=1$ as a function of $\mathfrak{C}_{\pi}$.  Many important problems depend on the elimination of this exceptional zero, which is sometimes called a Landau--Siegel zero.

Let $v$ be a place of $F$, and let $F_v$ be the completion of $F$ relative to $v$.  Given $\pi\in\mathfrak{F}_{n}$, we express $\pi$ as a restricted tensor product $\bigotimes_v\pi_v$ of smooth, admissible representations of $\mathrm{GL}_n(F_v)$.  There is a finite $S_{\pi}$ (possibly empty) set of places $v$ at which $\pi_v$ is ramified.  If $v\notin S_{\pi}$ is non-archimedean, then the Satake isomorphism attaches to $\pi_v$ a semisimple conjugacy class of $\GL_n(\CC)$ with representative $A(\pi_v)=\mathrm{diag}(\alpha_{1,\pi}(v),\ldots,\alpha_{n,\pi}(v))$.  The Langlands principle of functoriality predicts that if $r\colon \GL_2(\CC)\to \GL_n(\CC)$ is an algebraic representation, then there should be a map $\rho$ from automorphic representations of $\GL_2(\mathbb{A}_F)$ to automorphic representations of $\GL_n(\mathbb{A}_F)$, with compatible local maps, such that if $v\notin S_{\pi}$ is non-archimedean, then $r(A(\pi_v))=A(\rho(\pi)_v)$.  In order to establish the principle of functoriality for all representations $r$, it suffices to establish it for irreducible $r$.

Let $\pi \in \fF_2$.  For $m\geq 0$, let $\mathrm{Sym}^m\colon \mathrm{GL}_2(\mathbb{C})\to\mathrm{GL}_{m+1}(\mathbb{C})$ be the $(m+1)$-dimensional irreducible representation of $\mathrm{GL}_2(\mathbb{C})$ on symmetric tensors of rank $m$.  If $P(x,y)$ is a homogeneous degree $m$ polynomial in two variables and $g\in\mathrm{GL}_2(\mathbb{C})$, then $\mathrm{Sym}^m(g)\in\mathrm{GL}_{m+1}(\mathbb{C})$ is the matrix giving the change in coefficients of $P$ under the change of variables by $g$.  Let $\varphi_v$ be the two-dimensional representation of the Deligne--Weil group attached to $\pi_v$ and $\mathrm{Sym}^m(\pi_v)$ be the smooth admissible representation of $\mathrm{GL}_{m+1}(F_v)$ attached to the representation $\mathrm{Sym}^m\circ \varphi_v$.  By the local Langlands correspondence, $\mathrm{Sym}^m(\pi_v)$ is well-defined for every place $v$, so we can define the Euler product associated to the $m$-th symmetric power lift of $\pi$:  If $\chi\in\mathfrak{F}_1$, then
\[
L(s,\pi,\mathrm{Sym}^m\otimes\chi) = \prod_{v\nmid\infty} L(s,\mathrm{Sym}^m(\pi_v)\otimes\chi_v).
\]
If $r_0$ is the standard representation of $\GL_2(\CC)$ with determinant $L$, then for each irreducible representation $r$ of $\GL_2(\CC)$, there exist integers $n\geq 0$ and $k$ such that $r=\Sym^n(r_0)\otimes L^{\otimes k}$.  The principle of functoriality now predicts that $\mathrm{Sym}^m(\pi) = \bigotimes_v \mathrm{Sym}^m(\pi_v)$ is an automorphic representation of $\mathrm{GL}_{m+1}(\mathbb{A}_F)$.  This is known for $m\leq 4$~\cite{GJ,Kim,KimShahidi}.  If $\mathrm{Sym}^m(\pi)$ is known to be automorphic, then we write $L(s,\mathrm{Sym}^m(\pi)\otimes\chi)$ instead of $L(s,\pi,\mathrm{Sym}^m\otimes\chi)$.

Let $L(s,\pi\times\pi')$ be the Rankin--Selberg $L$-function associated to $(\pi,\pi')\in\mathfrak{F}_{n}\times\mathfrak{F}_{n'}$ (see Jacquet, Piatetski-Shapiro, and Shalika~\cite{JPSS}).  Shahidi~\cite{Shahidi} proved that $L(s,\pi\times\pi')\neq 0$ when $\mathrm{Re}(s)\geq 1$.  Equivalently, replacing $\pi$ with $\pi\otimes|\cdot|^{it}$ and varying $t\in\mathbb{R}$, we have that if $(\pi,\pi')\in\mathfrak{F}_{n}\times\mathfrak{F}_{n'}$ and $\sigma\geq 1$, then $L(\sigma,\pi\times\pi')\neq 0$.  Brumley~\cite{Brumley,Lapid} proved that there exists an effectively computable constant $\Cl[abcon]{Brumley}=\Cr{Brumley}(n,n',F)>0$ such that $L(\sigma,\pi\times\pi')\neq 0$ when $\sigma\geq 1-1/(\mathfrak{C}_{\pi}\mathfrak{C}_{\pi'})^{\Cr{Brumley}}$.  See also the related recent work of Harcos and Thorner \cite{HarcosThorner,HarcosThorner2}.

The principle of functoriality also asserts that $L(s,\pi\times\pi')$ factors as a product of standard $L$-functions (i.e., it is {\it modular}).  Hoffstein and Ramakrishnan~\cite{HoffsteinRamakrishnan} proved that if all Rankin--Selberg $L$-functions are modular and $\pi \in \cup_{n=2}^{\infty}\mathfrak{F}_{n}$, then $L(s,\pi)$ has no exceptional zero (see Section \ref{subsec:earlier_work} for a more detailed discussion).  Modularity for $L(s,\pi\times\pi')$ is known only in special cases, most notably when $\pi\in\mathfrak{F}_{2}$ and $\pi'\in\mathfrak{F}_{2}\cup\mathfrak{F}_{3}$~\cite{KimShahidi,Ramakrishnan}. Therefore, the unconditional elimination of exceptional zeros remains a difficult and fruitful problem.  We say that $L(s,\pi\times\pi')$ has {\it no exceptional zero}, or {\it no Landau--Siegel zero}, if there exists an absolute and effectively computable constant $\Cl[abcon]{ZFR11}>0$ such that
\[
L(\sigma,\pi\times\pi')\neq 0,\qquad \sigma \geq 1-\Cr{ZFR11}/(n n'\log(\mathfrak{C}_{\pi}\mathfrak{C}_{\pi'})).
\]
If $(\pi,\pi')\in\mathfrak{F}_2$ and $r,r'$ are algebraic representations such that $r(\pi)$ and $r'(\pi')$ are automorphic, then we say that $L(s,r(\pi)\times r'(\pi'))$ has {\it no exceptional zero}, or {\it no Landau--Siegel zero}, if there exists an effectively computable constant $\Cl[abcon]{ZFR_r}=\Cr{ZFR_r}(r,r')>0$ such that
\[
L(\sigma,r(\pi)\times r'(\pi'))\neq 0,\qquad \sigma \geq 1-\Cr{ZFR_r}/\log(\mathfrak{C}_{\pi}\mathfrak{C}_{\pi'}).
\]

We call $\pi,\pi'\in\mathfrak{F}_{n}$ {\it twist-equivalent}, denoted $\pi\sim\pi'$, if there exists $\psi\in\mathfrak{F}_{1}$ such that $\pi'=\pi\otimes\psi$.  Otherwise, we write $\pi\not\sim\pi'$.  Let $\mathbbm{1}\in\mathfrak{F}_{1}$ be the trivial representation, whose $L$-function is the Dedekind zeta function $\zeta_F(s)$.  If $\chi\in\mathfrak{F}_1$, $\chi=\bar{\chi}$, and $L(s,\chi)$ appears in as a factor of another $L$-function, then we call $L(s,\chi)$ a {\it self-dual abelian factor}.  If $\pi\in\mathfrak{F}_2$ and there exists a nontrivial quadratic $\eta\in\mathfrak{F}_1$ such that $\pi = \pi \otimes \eta$, then $\pi$ is \textit{dihedral}.

\begin{proposition}
\label{prop:list}
\begin{enumerate}[leftmargin=*]
	\item~\cite{HoffsteinRamakrishnan,Wattanawanichkul} If $\pi\in\cup_{n=1}^{\infty}\mathfrak{F}_{n}$ and $\pi\neq \tilde{\pi}$, then $L(s,\pi)$ has no exceptional zero.
	\item~\cite{Banks,HoffsteinRamakrishnan} If $\pi\in\mathfrak{F}_{2}\cup \mathfrak{F}_{3}$, then $L(s,\pi)$ has no exceptional zero.
	\item~\cite{Luo} If $\pi\in\mathfrak{F}_n$ and $\pi\otimes\psi=\pi$ for some $\psi\in\mathfrak{F}_1-\{\mathbbm{1}\}$, then $L(s,\pi)$ has no exceptional zero.
	\item~\cite{RamakrishnanWang} If $\pi,\pi'\in\mathfrak{F}_{2}$, then any exceptional zero of $L(s,\pi\times\pi')$ is a zero of a self-dual abelian factor.  No such factor exists when $\pi\not\sim\pi'$ and at least one of $\pi,\pi'$ is non-dihedral.
	\item~\cite{RamakrishnanWang} If $\pi\in\mathfrak{F}_{2}$ is self-dual, then any exceptional zero of $L(s,\mathrm{Sym}^2(\pi)\times\mathrm{Sym}^2(\pi))/\zeta_F(s)=L(s,\mathrm{Sym}^2(\pi)\otimes\omega_{\pi})L(s,\mathrm{Sym}^4(\pi))$ is a zero of a self-dual abelian factor.
	\item~\cite{Luo} Let $(\pi,\pi')\in\mathfrak{F}_2\times\mathfrak{F}_3$.  If $\pi'\not\sim\Sym^2(\pi)$ or $\pi$ is dihedral, then $L(s,\pi\times\pi')$ has no exceptional zero.
	\item~\cite{HumphriesThorner,Wattanawanichkul} Let $(\pi,\pi')\in\mathfrak{F}_n\times\mathfrak{F}_{n'}$.  Suppose that $L(s,\pi\times\pi')=L(s,\tilde{\pi}\times\tilde{\pi}')$.  If $t\neq 0$, then $L(s,\pi\times(\pi'\otimes|\cdot|^{it}))$ has no exceptional zero.
	\item~\cite{Humphries,Wattanawanichkul} If $(\pi,\pi')\in\mathfrak{F}_n\times\mathfrak{F}_{n'}$, $\pi\neq\tilde{\pi}$, and $\pi'=\tilde{\pi}'$, then $L(s,\pi\times\pi')$ has no exceptional zero.
\end{enumerate}
\end{proposition}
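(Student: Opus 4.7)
The plan is to verify each of the eight items by appealing to the cited literature and unifying them under the standard positivity framework for ruling out Landau--Siegel zeros. The common mechanism, which goes back to Hoffstein--Lockhart and Hoffstein--Ramakrishnan, is the following. Given a candidate exceptional zero $\beta\in[1-c/\log\mathfrak{C},1]$ of $L(s,\pi\times\pi')$, one builds an isobaric sum $\Pi=\boxplus_i m_i\sigma_i$ whose Rankin--Selberg square $L(s,\Pi\times\widetilde{\Pi})$ has nonnegative Dirichlet coefficients, a controlled pole at $s=1$, and factors through products of $L$-functions whose analytic properties (meromorphic continuation, boundedness in vertical strips, functional equation) are available under the modularity results listed in the introduction. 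Comparing the residue at $s=1$ with the contribution of $\beta$ via a Mellin--Perron contour shift yields a quantitative contradiction when $1-\beta$ is small.

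First I would treat items (1)--(3): for (1) the auxiliary object is $\pi\boxplus\widetilde\pi\boxplus\mathbbm{1}$, whose Rankin--Selberg square is known to be modular only through pieces of the form $L(s,\pi\times\pi)$ and $L(s,\pi\times\widetilde\pi)$, but in any case has a pole of order $\geq 4$ from the condition $\pi\neq\widetilde\pi$; (2) is handled analogously using the modularity of $\Sym^m$ for $m\leq 4$ together with the Kim--Shahidi functoriality, which provides enough automorphic $L$-functions to form $\Pi\times\widetilde\Pi$; (3) uses Luo's observation that the self-twist $\pi\otimes\psi\cong\pi$ induces an abelian decomposition making $L(s,\pi)$ a product of Hecke $L$-functions of a lower-degree extension, reducing the problem to known cases.

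Next I would turn to (4)--(6). For (4) and (5) the key input of Ramakrishnan--Wang is the explicit factorization of $L(s,\pi\times\pi')$ or $L(s,\Sym^2\pi\times\Sym^2\pi)/\zeta_F(s)$ via $\Sym^2$, $\Sym^4$, and the tensor product $\pi\boxtimes\pi'$ of Ramakrishnan, followed by a positivity argument that isolates exceptional zeros to abelian factors. For (6) one uses the modularity of $\Sym^2(\pi)$ and the classification of twist-equivalences: if $\pi'\not\sim\Sym^2(\pi)$, then $L(s,\pi\times\pi')\cdot L(s,\Sym^2(\pi)\times\pi')$ has a suitable factorization whose positivity rules out the zero; the dihedral case is treated separately via the induced-representation structure.

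Finally, items (7) and (8) follow the Humphries--Thorner technique of exploiting the shift $|\cdot|^{it}$: the hypothesis $L(s,\pi\times\pi')=L(s,\widetilde\pi\times\widetilde\pi')$ produces, alongside a putative real zero of $L(s,\pi\times(\pi'\otimes|\cdot|^{it}))$, a conjugate zero at the shifted point, and the combined pair forces a contradiction against the bounded-growth of $L$ on the edge. For (8) the asymmetry between a non-self-dual $\pi$ and a self-dual $\pi'$ plays the same role, with $\pi\boxplus\widetilde\pi$ providing the required positivity structure.

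The main technical obstacle, which I expect to be the one shared by all parts, is ensuring nonnegativity of Dirichlet coefficients at ramified and archimedean places, since the cleanest formulations of positivity assume unramified local parameters. This is resolved by invoking the local Langlands correspondence for $\GL_n$ and the explicit local factors for $\Sym^m(\pi_v)$ described in the introduction, together with standard archimedean $\Gamma$-function estimates. With these in place, each of (1)--(8) reduces, up to notational bookkeeping, to the result stated in the corresponding reference.
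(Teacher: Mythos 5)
Your reduction of Proposition~\ref{prop:list} to the cited references matches the paper's own treatment exactly: the paper offers no proof of this proposition, since each item is a known result quoted from the literature (Hoffstein--Ramakrishnan, Banks, Luo, Ramakrishnan--Wang, Humphries--Thorner, etc.), so deferring to those sources is the intended argument. One caveat on your supplementary sketches, though it does not affect the outcome: in item (1) the bookkeeping is reversed --- the hypothesis $\pi\neq\tilde{\pi}$ keeps the pole of $L(s,\Pi\times\tilde{\Pi})$ for $\Pi=\mathbbm{1}\boxplus\pi\boxplus\tilde{\pi}$ at order $3$ while the putative real zero occurs with multiplicity at least $4$ (not a pole of order $\geq 4$), and items (7)--(8) are likewise multiplicity-versus-pole-order positivity arguments in the spirit of Lemma~\ref{lem:GHL} (the role of $t\neq 0$ or $\pi\neq\tilde{\pi}$ being to suppress extra poles), not estimates on growth at the edge of the critical strip.
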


When $F$ is totally real and $\pi,\pi'\in\mathfrak{F}_2$ are non-dihedral and regular algebraic (so that they correspond with holomorphic primitive Hilbert cusp forms), Newton and Thorne~\cite{NewtonThorne3,NewtonThorne,NewtonThorne2} proved that if $n\geq 1$, then $\Sym^n(\pi),\Sym^n(\pi')\in\mathfrak{F}_{n+1}$.  Using this, Thorner proved the following result.

\begin{proposition}[{\cite[Theorem 1.1]{Thorner_Siegel}}]
\label{prop:Thorner}
Let $F$ be totally real.  Let $\pi,\pi'\in\mathfrak{F}_2$ be non-dihedral and regular algebraic.  If $m,n\geq 0$, $m+n\geq 1$, and $\chi\in\mathfrak{F}_1$ corresponds with a ray class character over $F$, then any exceptional zero of $L(s,\Sym^m(\pi)\times(\Sym^n(\pi')\otimes\chi))$ is a zero of a self-dual abelian factor.  No such factor exists when $m\neq n$ or $\pi\not\sim\pi'$.
\end{proposition}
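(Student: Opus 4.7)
The plan is to reduce the exceptional-zero analysis to cases already covered by Proposition~\ref{prop:list}. The key inputs are the Newton--Thorne theorem, which makes all symmetric power lifts of $\pi$ and $\pi'$ cuspidal automorphic, and the Clebsch--Gordan branching rule
\[
\Sym^m\otimes\Sym^n\cong\bigoplus_{k=0}^{\min(m,n)}\Sym^{m+n-2k}\otimes(\det)^k
\]
for representations of $\GL_2(\CC)$, which factors the Rankin--Selberg $L$-function into a product of standard $L$-functions.

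In the case $\pi\sim\pi'$ with $\pi'=\pi\otimes\psi$, one obtains
\[
L(s,\Sym^m(\pi)\times(\Sym^n(\pi')\otimes\chi))=\prod_{k=0}^{\min(m,n)}L(s,\Sym^{m+n-2k}(\pi)\otimes\omega_{\pi}^k\psi^n\chi).
\]
A self-dual abelian factor arises exactly from the term with $m+n-2k=0$, which forces $m=n$ (and $k=m$); hence no such factor appears when $m\neq n$. Each remaining factor is a cuspidal $L$-function $L(s,\Sym^r(\pi)\otimes\xi)$ with $r\geq 1$: non-self-dual ones are killed by Proposition~\ref{prop:list}(1), and self-dual ones on $\GL_2$ or $\GL_3$ by Proposition~\ref{prop:list}(2).

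For self-dual cuspidal $\Sym^r(\pi)\otimes\xi$ with $r\geq 3$, I would apply the Hoffstein--Ramakrishnan isobaric-sum trick: letting $\Pi=\mathbbm{1}\boxplus(\Sym^r(\pi)\otimes\xi)$, the $L$-function $L(s,\Pi\times\tilde\Pi)$ has nonnegative Dirichlet coefficients (Jacquet--Shalika), a double pole at $s=1$, and factors further via the Clebsch--Gordan identity
\[
L(s,\Sym^r(\pi)\times\Sym^r(\pi))=\prod_{j=0}^{r}L(s,\Sym^{2r-2j}(\pi)\otimes\omega_{\pi}^j)
\]
into cuspidal $L$-functions, each automorphic by Newton--Thorne. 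A putative exceptional zero of $L(s,\Sym^r(\pi)\otimes\xi)$ at $\beta$ then imposes a zero of order at least $2$ on $L(s,\Pi\times\tilde\Pi)$ at $\beta$, and the pole/zero balance combined with coefficient positivity yields $1-\beta\gg 1/\log\mathfrak{C}_{\Pi}$. When $\pi\not\sim\pi'$, I would apply the analogous argument to the isobaric sum $\mathbbm{1}\boxplus\Sym^m(\pi)\boxplus(\Sym^n(\pi')\otimes\chi)$, using that twist-inequivalence precludes any abelian summand in the resulting Rankin--Selberg factor.

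The principal obstacle is executing the recursion for self-dual symmetric powers of rank $r\geq 3$, since Proposition~\ref{prop:list}(2) covers only $\GL_2$ and $\GL_3$. One must also track the dependence on conductors so that the effective constant $\Cr{ZFR_r}$ remains independent of $\mathfrak{C}_{\pi}\mathfrak{C}_{\pi'}$. Newton--Thorne is exactly what makes each step of the recursion unconditional, by guaranteeing that every $L$-function appearing in the factorizations above is a genuine cuspidal $L$-function with polynomially controlled analytic conductor.
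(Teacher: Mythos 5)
There is a genuine gap, and it occurs at both of the places where you say "apply the Hoffstein--Ramakrishnan trick." The elimination mechanism (Proposition~\ref{prop:strategy}, via Lemma~\ref{lem:GHL}) requires the putative zero to occur in the auxiliary product with multiplicity \emph{strictly greater} than the order of the pole at $s=1$. For a self-dual cuspidal factor $\sigma=\Sym^r(\pi)\otimes\xi$ with $r\geq 4$, your choice $\Pi=\mathbbm{1}\boxplus\sigma$ gives $L(s,\Pi\times\tilde\Pi)=\zeta_F(s)\,L(s,\sigma)^2\,L(s,\sigma\times\tilde\sigma)$, so a real zero of $L(s,\sigma)$ has multiplicity $2$ while the pole has order $2$: no contradiction. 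The genuine Hoffstein--Ramakrishnan argument needs a third cuspidal constituent $\tau\notin\{\mathbbm{1},\sigma,\tilde\sigma\}$ of $\sigma\boxtimes\tilde\sigma$ together with holomorphy of $L(s,\sigma\times\tau)/L(s,\sigma)$ near $s=1$ (see Section~\ref{subsec:earlier_work}), which is available here only because Newton--Thorne makes the Clebsch--Gordan constituents automorphic; your sketch omits this step entirely. The same counting failure occurs in the twist-inequivalent case: with $\Pi=\mathbbm{1}\boxplus\Sym^m(\tilde\pi)\boxplus(\Sym^n(\pi')\otimes\chi)$, the target $L$-function and its dual contribute a zero of multiplicity $2$, while $\zeta_F(s)$ and the two diagonal Rankin--Selberg factors contribute a pole of order $3$, so Lemma~\ref{lem:GHL} yields nothing.

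The deeper issue in the twist-inequivalent case is that fixing the count forces a much larger positive auxiliary product, essentially $\mathcal{D}(s)$ in \eqref{eqn:JT_Sym}, and then one must show that the extra factors of the shape $L(s,\Sym^{j}(\pi)\times(\Sym^{j}(\pi')\otimes\xi))$ contribute no additional poles when $\pi\not\sim\pi'$. This is not a statement about "abelian summands" and it does not follow from Rankin--Selberg theory or from Newton--Thorne alone: in \cite{Thorner_Siegel} it is proved via the $\ell$-adic Galois representations attached to the regular algebraic $\pi,\pi'$ and the Chebotarev density theorem, and this is precisely where the hypotheses that $F$ is totally real, $\pi,\pi'$ are regular algebraic, and $\chi$ comes from a ray class character are used. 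Your proposal invokes Newton--Thorne but never engages with this pole-exclusion input, so the twist-inequivalent half of the statement (and the self-dual higher symmetric power factors in the twist-equivalent half) remains unproved as written. The twist-equivalent bookkeeping you do perform --- the Clebsch--Gordan factorization, the observation that a self-dual abelian factor forces $m=n$, and the disposal of the $\GL_2$ and $\GL_3$ factors by Proposition~\ref{prop:list}(1,2) --- is correct and consistent with the paper's account.
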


In this paper, we eliminate exceptional zeros for new families of Rankin--Selberg $L$-functions and triple product $L$-functions.

\begin{theorem}
\label{thm:main}
Let $\chi\in\mathfrak{F}_1$.  Let $\pi,\pi',\pi''\in\mathfrak{F}_2$ be non-dihedral and pairwise twist-inequivalent.  Let $\pi_0\in\mathfrak{F}_3$ satisfy $\pi_0\not\sim\Sym^2(\pi)$ and $\pi_0\not\sim\Sym^2(\pi')$.
\begin{enumerate}[leftmargin=*]
\item The following $L$-functions have no exceptional zeros:
\begin{enumerate}[leftmargin=*]
	\item $L(s,\mathrm{Sym}^2(\pi)\times\pi_0)$,
	\item $L(s,\mathrm{Sym}^3(\pi)\times(\mathrm{Sym}^2(\pi')\otimes\chi))$,
	\item $L(s,\mathrm{Sym}^4(\pi)\times\pi')$,
	\item $L(s,\pi\times\pi'\times\pi'')$,
	\item $L(s,\pi\times\pi'\times\pi_0)$, and
	\item $L(s,\pi\times\mathrm{Sym}^2(\pi')\times\mathrm{Sym}^2(\pi''))$.
\end{enumerate}
\item  If $\mathrm{Sym}^3(\pi)\neq\mathrm{Sym}^3(\pi')\otimes \overline{\omega}_{\pi'}^2$, then $L(s,\mathrm{Sym}^3(\pi)\times \pi')$ has no exceptional zero.
\item If $\mathrm{Sym}^4(\pi)\otimes\bar{\omega}_{\pi}^2\neq\mathrm{Sym}^4(\pi')\otimes\bar{\omega}_{\pi'}^2$ or $\chi^2\omega_{\pi}^4\omega_{\pi'}^2\neq\mathbbm{1}$, then $L(s,\mathrm{Sym}^4(\pi)\times(\mathrm{Sym}^2(\pi')\otimes\chi))$ has no exceptional zero.
\end{enumerate}
\end{theorem}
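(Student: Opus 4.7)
The plan is to extend the Hoffstein--Ramakrishnan / Ramakrishnan--Wang / Thorner factorization framework. For each target $L$-function in the theorem, we express it as an off-diagonal factor inside a Rankin--Selberg square $L(s,\Pi\times\tilde\Pi)$ built from an isobaric automorphic sum
\[
\Pi=\mathbbm{1}\boxplus\sigma_1\boxplus\cdots\boxplus\sigma_k,
\]
whose cuspidal constituents $\sigma_j\in\fF_{n_j}$ are obtained from $\pi,\pi',\pi'',\pi_0,\chi$ via the known cases of Langlands functoriality: Gelbart--Jacquet's $\Sym^2$, Kim--Shahidi's $\Sym^3$, Kim's $\Sym^4$, Ramakrishnan's tensor product $\GL_2\times\GL_2\to\GL_4$, and Kim--Shahidi's tensor product $\GL_2\times\GL_3\to\GL_6$. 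Since $L(s,\Pi\times\tilde\Pi)=\prod_{i,j}L(s,\sigma_i\times\tilde\sigma_j)$ is a Rankin--Selberg square, its logarithmic derivative has non-negative Dirichlet coefficients, and $L(s,\Pi\times\tilde\Pi)$ has a pole at $s=1$ of order $k+1$ whenever the $\sigma_j$ are pairwise non-isomorphic. The standard positivity inequality applied at a putative exceptional zero of one off-diagonal factor then forces a contradiction unless either a diagonal factor $L(s,\sigma_j\times\tilde\sigma_j)$ or another off-diagonal factor itself harbors an exceptional zero not already controlled by Proposition~\ref{prop:list} or Proposition~\ref{prop:Thorner}.

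For the Rankin--Selberg cases in part~(1)(a)--(c), we choose $\Pi$ to be an isobaric sum of $\mathbbm{1}$ with a suitable list of symmetric power lifts of $\pi$ and $\pi'$ together with $\pi_0$, designed so that the target $L$-function appears among the off-diagonal pieces and every diagonal factor $L(s,\sigma_j\times\tilde\sigma_j)$ is handled by parts~(2) or~(5) of Proposition~\ref{prop:list}, or by Proposition~\ref{prop:Thorner}. The twist-inequivalence hypotheses $\pi\not\sim\pi'$, $\pi_0\not\sim\Sym^2(\pi)$, and $\pi_0\not\sim\Sym^2(\pi')$ ensure that the $\sigma_j$ are pairwise non-isomorphic. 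For the triple product cases (1)(d)--(f), we first promote two of the three factors to their Ramakrishnan (or Kim--Shahidi) tensor lift in $\fF_4$ (respectively $\fF_6$), which is cuspidal by the non-dihedrality and pairwise twist-inequivalence of the original representations, and thereby reduce to an ordinary Rankin--Selberg $L$-function treated by the same isobaric sum technique.

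Parts~(2) and~(3) are borderline cases in which a single self-dual abelian factor could a priori sneak into the decomposition. The Clebsch--Gordan rule on $\GL_2(\CC)$ gives $\Sym^m\otimes\Sym^n=\bigoplus_{j=0}^{\min(m,n)}\Sym^{m+n-2j}\otimes\mathrm{det}^{j}$, so $L(s,\Sym^m(\pi)\times(\Sym^n(\pi')\otimes\chi))$ splits (wherever each constituent is automorphic) into a product of Rankin--Selberg $L$-functions among lower symmetric powers of $\pi$ and $\pi'$, twisted by explicit powers of the central characters $\omega_\pi,\omega_{\pi'}$ and by $\chi$. The explicit hypotheses in~(2) and~(3)---namely $\Sym^3(\pi)\neq\Sym^3(\pi')\otimes\ov\omega_{\pi'}^{\,2}$ in~(2), and $\Sym^4(\pi)\otimes\ov\omega_\pi^{\,2}\neq\Sym^4(\pi')\otimes\ov\omega_{\pi'}^{\,2}$ or $\chi^2\omega_\pi^4\omega_{\pi'}^2\neq\mathbbm{1}$ in~(3)---are precisely the conditions needed so that every constituent thus produced is either non-self-dual, or a cuspidal representation on $\GL_2$ or $\GL_3$, to which Proposition~\ref{prop:list}(1)--(2) applies.

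The main obstacle is the case-by-case verification that, under the given hypotheses, no coincidence $\sigma_i\sim\sigma_j$ and no self-dual abelian factor occurs in any of the nine sub-cases. This rests on the classical dictionary between non-dihedral $\GL_2$ cusp forms and their symmetric power lifts: any twist relation $\Sym^m(\pi)\sim\Sym^m(\pi')$ forces $\pi\sim\pi'$ up to a finite-order character twist, and central-character arithmetic then pins down the precise list of allowable twists. Once these combinatorial exclusions are collected, the quantitative extraction of a zero-free region from the positivity inequality follows the scheme carried out in Proposition~\ref{prop:Thorner} and in \cite{HumphriesThorner,Wattanawanichkul}.
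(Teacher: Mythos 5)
Your proposal runs into a structural obstruction at its core step, and this is precisely the obstruction that the paper's actual argument is designed to circumvent. If you insist that $\Pi=\mathbbm{1}\boxplus\sigma_1\boxplus\cdots\boxplus\sigma_k$ be a genuine isobaric automorphic representation built only from the known lifts ($\Sym^{m}$ for $m\leq 4$, $\GL_2\times\GL_2$, $\GL_2\times\GL_3$), then a target such as $L(s,\Sym^4(\pi)\times\pi')$ or $L(s,\Sym^4(\pi)\times(\Sym^2(\pi')\otimes\chi))$ can only occur in $L(s,\Pi\times\widetilde{\Pi})$ as the cross term attached to one specific pair of constituents, say with multiplicities $a$ and $b$; its total multiplicity is then at most $2ab$, while the diagonal already forces a pole at $s=1$ of order at least $a^2+b^2$ (plus $1$ from $\mathbbm{1}$), so positivity alone can never produce the contradiction $\ell_1+\ell_2>k$ of Proposition~\ref{prop:strategy}. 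The classical escape of Hoffstein--Ramakrishnan is to propagate the putative zero into \emph{other} factors by showing a quotient like $L(s,\pi_1\times\tau)/L(s,\pi_1)$ is entire; for the $L$-functions in Theorem~\ref{thm:main} this would require holomorphy of twists of $\Sym^5$ and $\Sym^6$, or modularity of $\GL_3\times\GL_3$, exactly the inputs that are unavailable (see Remarks~\ref{rem:GL4xGL3} and~\ref{rem:2} and Section~\ref{subsec:OurApproach}). The paper's proof does something genuinely different: it abandons global automorphy of the auxiliary object. Lemma~\ref{lem:D} builds $D(s)$ as a product of honest Rankin--Selberg $L$-functions of degree at most $5\times 5$, whose coefficient nonnegativity is verified \emph{locally} by Clebsch--Gordan, so that ``constituents'' like $A^2(\pi_v)\otimes A^2(\pi_v')$ never need to be automorphic; the multiplicities $c_{j,k,r}$ are then tuned so the target occurs to order $7$, $8$, or $12$, while the pole order is bounded by $6$, $7$, or $10$ via Lemma~\ref{counting pole lemma} --- the point being that, because $\pi\not\sim\pi'$, the would-be ``diagonal'' factors such as $L(s,A^2(\pi)\times A^2(\pi'))$ and (generically) $L(s,A^4(\pi)\times A^4(\pi'))$ are entire or have small pole order, so the quadratic-form accounting above no longer binds. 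Your plan contains no mechanism playing this role, and without it the positivity inequality does not close in cases (1a), (1b), (1c), (2), (3), nor in the general subcases of (1f).

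Two further points. First, your reading of the hypotheses in parts (2) and (3) is off: they are not there to make every Clebsch--Gordan constituent non-self-dual or cuspidal on $\GL_2$/$\GL_3$ (no such global splitting exists when $\pi\not\sim\pi'$); in the paper they serve to guarantee that $L(s,A^3(\pi)\times A^3(\pi'))$, resp.\ $L(s,A^4(\pi)\times A^4(\pi'))$ and its $\chi$-twist, contribute no extra pole to $D(s)$ (see Theorems~\ref{thm:GL2xGL4} and~\ref{thm:GL5xGL3}). Second, the degenerate cases are not a routine ``classical dictionary'': the self-dual $\pi_0$ case of (1a) needs Ramakrishnan's descent (Lemma~\ref{lem:exercise}), several branches need the self-twist criterion of Theorem~\ref{thm:main3}, and Theorem~\ref{symmetric fourth no Siegel zero} --- which the casework repeatedly invokes --- rests on Takeda's symmetric-square $L$-function of $\Sym^3(\pi)$ (Corollary~\ref{cor:sym2}), none of which appears in your outline. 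For the triple products (1d)--(1e) your idea of promoting two factors to a tensor lift is in the right spirit, but the actual content is the specific choice of the third constituent (e.g.\ $A^2(\pi)\boxtimes\widetilde{\pi}''$) and the verification that the resulting cross term $L(s,A^3(\pi)\times(\pi'\boxtimes\pi''))$ is entire, which is what pushes the target's multiplicity to $4$ against a pole of order $3$.
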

\begin{remark}
\begin{enumerate}
    \item Unlike Proposition~\ref{prop:Thorner}, the work of Newton and Thorne~\cite{NewtonThorne3,NewtonThorne,NewtonThorne2} does not apply in our level of generality.  In particular, we do not require that $F$ be totally real, and we permit $\pi,\pi'\in\mathfrak{F}_2$ to correspond with Hecke--Maa{\ss} newforms.
    \item Unlike Proposition~\ref{prop:list}(4,5,6), the $L$-functions considered are not yet known to be modular.
    \item Unlike Proposition~\ref{prop:list}(7,8), there are no hypotheses regarding self-duality.
    \item For readers already familiar with the proofs of Propositions~\ref{prop:list}~and~\ref{prop:Thorner}, we summarize our strategy and compare it with earlier results in Sections~\ref{subsec:earlier_work}~and~\ref{subsec:OurApproach}.
    \item The Euler products for the $L$-functions we consider have degree between $8$ and $18$.
    \item In Parts (2) and (3), the hypothesis is satisfied, for example, when
	\begin{enumerate}
	\item[(i)] there is a non-archimedean place $v$ of $F$ at which exactly one of $\pi_v$ and $\pi_v'$ is ramified, or
	\item[(ii)] $\pi$ and $\pi'$ are regular algebraic (regardless of whether $F$ is totally real).
	\end{enumerate}
\end{enumerate}
\end{remark}

Our proof of Theorem~\ref{thm:main} accounts for more possibilities than we have stated (e.g., at least one of $\pi,\pi',\pi''$ is dihedral, at least two are twist-equivalent, etc.).  Within our exhaustive casework, there are several cases where any exceptional zero must be a zero of a self-dual abelian factor.  We show how to classify the possible self-dual abelian factors when they exist, though we do not always make this classification fully explicit.  To help with this classification, we need two additional results on the nonexistence of exceptional zeros.  The first generalizes Proposition~\ref{prop:list}(5) (see~\cite{RamakrishnanWang}), and the second generalizes Proposition~\ref{prop:list}(3) (see~\cite{Luo}).

\begin{theorem}
\label{symmetric fourth no Siegel zero}
If $\pi\in \mathfrak{F}_2$ and $\chi\in \mathfrak{F}_1$, then any exceptional zero of $L(s,\Sym^4(\pi)\otimes\chi)$ is a zero of a self-dual abelian factor.  No such factor exists when $\pi$ is octahedral or not of solvable polyhedral type.
\end{theorem}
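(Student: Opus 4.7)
Let $\Sigma := \Sym^4(\pi)\otimes\chi$. The plan, modeled on the Ramakrishnan--Wang proof of Proposition~\ref{prop:list}(5), is to combine the standard self-duality and self-twist reductions with the Rankin--Selberg factorization of $L(s,\Sym^2(\pi)\times(\Sym^2(\pi)\otimes\chi))$ and a Hoffstein--Ramakrishnan-style positivity argument to trace any exceptional zero to a self-dual abelian factor.

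\emph{Reductions.} By Proposition~\ref{prop:list}(1), any exceptional zero of $L(s,\Sigma)$ forces $\Sigma\cong\widetilde{\Sigma}$. Since $\widetilde{\Sym^4(\pi)}\cong\Sym^4(\pi)\otimes\omega_\pi^{-4}$, this is equivalent to $\Sym^4(\pi)\otimes(\chi^2\omega_\pi^4)\cong\Sym^4(\pi)$. If $\chi^2\omega_\pi^4\neq\mathbbm{1}$, then $\Sigma$ is self-twist-equivalent by a non-trivial character and Proposition~\ref{prop:list}(3) finishes. So assume $\chi^2\omega_\pi^4=\mathbbm{1}$, which makes $\omega_\pi^2\chi$ a self-dual Hecke character.

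\emph{Main identity and positivity.} The $\mathrm{GL}_2(\mathbb{C})$-representation identity $\Sym^2\otimes\Sym^2=\Sym^4\oplus(\Sym^2\otimes\det)\oplus\det^2$ gives
\[
L(s,\Sym^2(\pi)\times(\Sym^2(\pi)\otimes\chi))=L(s,\Sigma)\,L(s,\Sym^2(\pi)\otimes\omega_\pi\chi)\,L(s,\omega_\pi^2\chi),
\]
in which $L(s,\omega_\pi^2\chi)$ is the candidate self-dual abelian factor. When $\pi$ is non-dihedral, $\Sym^2(\pi)\otimes\omega_\pi\chi$ is cuspidal in $\mathfrak{F}_3$ and so has no exceptional zero by Proposition~\ref{prop:list}(1,2). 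Apply the Hoffstein--Ramakrishnan positivity method to the isobaric sum $\Pi:=\Sym^2(\pi)\boxplus(\Sym^2(\pi)\otimes\chi)$: the Rankin--Selberg $L$-function $L(s,\Pi\times\widetilde{\Pi})$ has a pole at $s=1$ of order at least~$2$ (one from each cuspidal summand paired with its contragredient), and its logarithm has non-negative Dirichlet coefficients on prime powers. A standard de~la~Vall\'ee~Poussin-type estimate, combined with the factorization above, pins any exceptional zero of $L(s,\Sigma)$ to a zero of $L(s,\omega_\pi^2\chi)$, establishing the first sentence.

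\emph{Octahedral and non-polyhedral cases.} When $\pi$ is octahedral or not of solvable polyhedral type, Kim's theorem yields that $\Sym^4(\pi)$ is cuspidal in $\mathfrak{F}_5$. Then $\Sigma$ is cuspidal, its isobaric decomposition contains no abelian constituent, so no self-dual abelian factor exists and the first sentence rules out any exceptional zero. The principal technical obstacle is arranging the positivity argument of the previous paragraph precisely enough to confirm that the unique self-dual abelian constituent through which an exceptional zero of $L(s,\Sigma)$ can propagate is indeed $L(s,\omega_\pi^2\chi)$, rather than some other self-dual abelian factor hidden in the decomposition of $L(s,\Pi\times\widetilde{\Pi})$ coming from the various twisted $\Sym^2\otimes\Sym^2$ subidentities.
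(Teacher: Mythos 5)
There is a genuine gap, and it sits exactly at the hard case of the theorem: $\pi$ not of solvable polyhedral type (so $\Sigma=\Sym^4(\pi)\otimes\chi$ is cuspidal) with $\Sigma$ self-dual. Your positivity setup with $\Pi=\Sym^2(\pi)\boxplus(\Sym^2(\pi)\otimes\chi)$ does not deliver what you claim. The product $L(s,\Pi\times\widetilde{\Pi})$ has a pole of order $2$ at $s=1$ (one from each cuspidal summand against its contragredient), while a real zero $\beta$ of $L(s,\Sigma)$ enters only through the two cross terms $L(s,\Sym^2(\pi)\times(\Sym^2(\pi)\otimes\bar{\omega}_{\pi}^{2}\chi^{\pm1}))$, hence with multiplicity $2$ in the self-dual situation. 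A zero of multiplicity $2$ against a pole of order $2$ is perfectly consistent with Lemma~\ref{lem:GHL}, so the de~la~Vall\'ee~Poussin argument yields nothing; in particular it cannot ``pin'' $\beta$ to $L(s,\omega_{\pi}^{2}\chi)$ --- that abelian $L$-function is simply a different factor of the auxiliary product, and when $\Sigma$ is cuspidal the theorem asserts there is \emph{no} exceptional zero, not that it migrates to an abelian factor. The missing ingredient is the paper's Corollary~\ref{cor:sym2} (built on Takeda's twisted symmetric square of the $\GL_4$ form $\Sym^3(\pi)$), which shows $L(s,A^2(\pi)\times(A^4(\pi)\otimes\chi))/L(s,A^4(\pi)\otimes\chi)$ is holomorphic for $\re(s)>56/65$. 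With that in hand the paper takes $\Pi=\mathbbm{1}\boxplus A^2(\pi)\boxplus(A^4(\pi)\otimes\chi)\in\mathfrak{A}_9$, whose Rankin--Selberg square has a pole of order $3$ but vanishes at $\beta$ to order at least $4$ (twice from $L(s,A^4(\pi)\otimes\chi)^2$ and twice more from $L(s,A^2(\pi)\times(A^4(\pi)\otimes\chi))^2$ via the corollary), and only then does Lemma~\ref{lem:GHL} give a contradiction. Your closing paragraph acknowledges this ``technical obstacle'' but does not overcome it.

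Two further problems. First, your treatment of the octahedral case is incorrect: $\Sym^4(\pi)$ is cuspidal precisely when $\pi$ is neither dihedral, tetrahedral, nor octahedral, so for octahedral $\pi$ one has (Lemma~\ref{symmetric power decomposition}(3b)) $A^4(\pi)=\nu\boxplus(A^2(\pi)\otimes\eta)$ with $\nu\in\mathfrak{F}_2$ dihedral; the correct argument applies Proposition~\ref{prop:list}(2) to these two non-abelian cuspidal factors rather than asserting cuspidality of $\Sigma$. Second, the cases where the self-dual abelian factors actually arise --- $\pi$ dihedral or tetrahedral --- are never addressed: your main identity never factors $\Sym^4(\pi)$ itself, whereas the first sentence of the theorem requires exactly the explicit decompositions of Lemma~\ref{symmetric power decomposition}(2),(3a) (and Proposition~\ref{prop:list}(2) applied to the constituents). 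Relatedly, your opening reductions via Proposition~\ref{prop:list}(1),(3) presuppose that $\Sigma$ is cuspidal, so they are only legitimate in the not-solvable-polyhedral case and cannot launch a uniform argument covering all $\pi\in\mathfrak{F}_2$.
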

\begin{remark}
See Section \ref{subsec:symm} for the definitions of ``octahedral'' and ``solvable polyhedral type.''	
\end{remark}

\begin{theorem}
\label{thm:main3}
Let $\pi\in\mathfrak{F}_n$ and $\pi'\in\mathfrak{F}_{n'}$.  If there exists a nontrivial $\psi\in\mathfrak{F}_1$ such that $\pi=\pi\otimes\psi$ and $\pi'\neq\pi'\otimes\psi$, then $L(s,\pi\times\pi')$ has no exceptional zero.	
\end{theorem}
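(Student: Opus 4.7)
The plan is to adapt the Hoffstein--Ramakrishnan auxiliary $L$-function technique: construct an isobaric automorphic representation $\Pi$ so that $L(s, \Pi \times \tilde{\Pi})$ is a Dirichlet series with nonnegative logarithmic coefficients which, near $s = 1$, picks up strictly more zeros (from a putative exceptional zero of $L(s, \pi \times \pi')$) than its pole order there allows. The algebraic input is the self-twist $\pi \cong \pi \otimes \psi$, which forces the identity $L(s, \pi \times \sigma) = L(s, \pi \times (\sigma \otimes \bar\psi))$ for every isobaric $\sigma$, while the hypothesis $\pi' \not\cong \pi' \otimes \psi$ keeps certain twists of $\tilde{\pi'}$ inequivalent and thereby supplies an extra cuspidal summand in $\Pi$.

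First I would reduce to the case $\pi \not\cong \tilde{\pi'}$: otherwise $L(s, \pi \times \pi') = L(s, \pi \times \tilde\pi)$ has a simple pole at $s = 1$ by Jacquet--Shalika and hence cannot vanish nearby. Under this reduction, set
\[
\Pi := \pi \boxplus \tilde{\pi'} \boxplus (\tilde{\pi'} \otimes \psi),
\]
an isobaric representation of $\mathrm{GL}_{n + 2n'}(\mathbb{A}_F)$ whose three cuspidal components are pairwise distinct: $\pi \not\cong \tilde{\pi'}$ by the reduction, $\tilde{\pi'} \not\cong \tilde{\pi'} \otimes \psi$ by hypothesis, and $\pi \not\cong \tilde{\pi'} \otimes \psi$ because otherwise twisting by $\bar\psi$ and using the self-twist would force $\pi \cong \tilde{\pi'}$. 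Expanding the nine factors of $L(s, \Pi \times \tilde{\Pi})$ and using $\pi \otimes \psi \cong \pi$ (and its dual $\tilde\pi \otimes \psi \cong \tilde\pi$) to collapse every $\psi$-twist that enters through the $\pi$-summand, the product simplifies to
\[
L(s, \Pi \times \tilde{\Pi}) = L(s, \pi \times \tilde\pi) \cdot L(s, \pi' \times \tilde{\pi'})^{2} \cdot L(s, \pi \times \pi')^{2} \cdot L(s, \tilde\pi \times \tilde{\pi'})^{2} \cdot L(s, \pi' \times (\tilde{\pi'} \otimes \psi)) \cdot L(s, \pi' \times (\tilde{\pi'} \otimes \bar\psi)),
\]
which has pole of order exactly $3$ at $s = 1$ by Jacquet--Shalika: the last two factors contribute no pole because $\pi' \not\cong \pi' \otimes \psi$, and the cross factors $L(s, \pi \times \pi')$ and $L(s, \tilde\pi \times \tilde{\pi'})$ contribute none because $\pi \not\cong \tilde{\pi'}$.

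Finally, $-L'(s, \Pi \times \tilde{\Pi})/L(s, \Pi \times \tilde{\Pi})$ has nonnegative Dirichlet coefficients (Hoffstein--Lockhart positivity), and a real exceptional zero $\beta$ of $L(s, \pi \times \pi')$ automatically forces $L(\beta, \tilde\pi \times \tilde{\pi'}) = 0$ by complex conjugation. Hence $L(s, \Pi \times \tilde{\Pi})$ vanishes at $\beta$ to order at least $4$, strictly exceeding the pole order $3$ at $s = 1$. The now-standard Hoffstein--Ramakrishnan / Goldfeld--Hoffstein--Lieman contour argument (as packaged in the work of Humphries and of Humphries--Thorner) then converts this imbalance into an effective lower bound of the form $1 - \beta \gg 1/(nn' \log(\mathfrak{C}_{\pi}\mathfrak{C}_{\pi'}))$, contradicting the definition of an exceptional zero. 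The main obstacle is really only the bookkeeping inside the nine-factor expansion and confirming that $\pi \cong \tilde{\pi'}$ is indeed the only degenerate case the preliminary reduction needs to absorb.
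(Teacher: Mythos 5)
Your construction is, up to taking contragredients, exactly the one in the paper: the paper sets $\Pi=\widetilde{\pi}\boxplus\pi'\boxplus(\pi'\otimes\psi)$, obtains the same six-factor expansion of $L(s,\Pi\times\widetilde{\Pi})$ with a pole of order $3$ at $s=1$ (the two $\psi$-twisted factors being entire since $\pi'\neq\pi'\otimes\psi$), and concludes via nonnegativity of the Dirichlet coefficients of $-(L'/L)(s,\Pi\times\widetilde{\Pi})$ together with the Goldfeld--Hoffstein--Lieman-type lemma, using that a real zero of $L(s,\pi\times\pi')$ is also a zero of $L(s,\widetilde{\pi}\times\widetilde{\pi}')$. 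So the core of your argument coincides with the paper's proof.

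The one step that does not hold up as written is your preliminary reduction. You dismiss the case $\pi\cong\widetilde{\pi}'$ by asserting that the simple pole of $L(s,\pi\times\widetilde{\pi})$ at $s=1$ forces non-vanishing nearby; that is a non sequitur. A pole of order $1$ together with positivity only yields \emph{at most one} real zero in the standard window, and ruling out a Landau--Siegel zero of $L(s,\pi\times\widetilde{\pi})$ for general $\pi\in\mathfrak{F}_n$ is precisely the kind of open problem this whole circle of ideas is designed to avoid. Fortunately the case is vacuous under the hypotheses: from $\pi=\pi\otimes\psi$ one gets $\widetilde{\pi}=\widetilde{\pi}\otimes\overline{\psi}$, hence $\widetilde{\pi}=\widetilde{\pi}\otimes\psi$, so if $\pi'=\widetilde{\pi}$ then $\pi'=\pi'\otimes\psi$, contradicting the hypothesis. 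Replacing your reduction by this observation (which is also exactly what guarantees that the cross factors $L(s,\pi\times\pi')^2\,L(s,\widetilde{\pi}\times\widetilde{\pi}')^2$ contribute no pole) repairs the argument, after which your proof is the same as the paper's.
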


\subsection*{An application}

In each of our theorems, we can replace $\pi$ with $\pi\otimes|\cdot|^{it}$ and let $t\in\R$ vary.  So doing, our theorems produce {\it standard zero-free regions} with no exceptional zero. This makes the results described above useful in the theory of primes.  Let $F$ be a totally real number field, and let $\pi,\pi',\pi''\in\mathfrak{F}_2$ be non-dihedral, pairwise twist-inequivalent, and regular algebraic.   The motivation of Thorner in~\cite{Thorner_Siegel} for proving Proposition~\ref{prop:Thorner} was to obtain a highly uniform rate of convergence in the Sato--Tate law for the Hecke eigenvalues of $\pi$ and the joint Sato--Tate law for the Hecke eigenvalues of $\pi$ and $\pi'$.  One possible application of Theorem~\ref{thm:main} is the existence of an absolute constant $\Cl[abcon]{ST3}>0$, and nontrivial proper subintervals $I,I',I''\subseteq[-2,2]$, and a non-archimedean place $v$ at which $\pi_v$, $\pi_v'$, and $\pi_v''$ are unramified such that $q_v\leq (\mathfrak{C}_{\pi}\mathfrak{C}_{\pi'}\mathfrak{C}_{\pi''})^{\Cr{ST3}}$ and the Hecke eigenvalues at $v$ satisfy $(a_{\pi}(v),a_{\pi'}(v),a_{\pi''}(v))\in I\times I'\times I''$.  The authors plan to explore this in another paper.

\subsection*{Organization}
In Sections~\ref{sec:Properties}~and~\ref{sec:symm}, we cite the properties of automorphic representations and $L$-functions that we need for our proofs.

In Section~\ref{sec:proofs_Sym4+self-twist}, we prove Theorems~\ref{symmetric fourth no Siegel zero}~and~\ref{thm:main3}.

In Section~\ref{sec:strategy}, we summarize our strategy for proving Theorem~\ref{thm:main} and contrast it with earlier approaches.

In Section~\ref{sec:1a}, we prove Theorem~\ref{thm:GL3xGL3}, from which we deduce Theorem~\ref{thm:main}(1a).

In Section~\ref{sec:1b}, we prove Theorem~\ref{thm:GL4xGL3}, from which we deduce Theorem~\ref{thm:main}(1b).

In Section~\ref{sec:1c}, we prove Theorem~\ref{thm:GL5xGL2}, from which we deduce Theorem~\ref{thm:main}(1c).

In Section~\ref{sec:1d}, we prove Theorem~\ref{thm:GL2xGL2xGL2}, from which we deduce Theorem~\ref{thm:main}(1d).

In Section~\ref{sec:1e}, we prove Theorem~\ref{thm:GL2xGL2xGL3}, from which we deduce Theorem~\ref{thm:main}(1e).

In Section~\ref{sec:1f}, we prove Theorem~\ref{thm:GL2xGL3xGL3}, from which we deduce Theorem~\ref{thm:main}(1f).

In Section~\ref{sec:2}, we prove Theorem~\ref{thm:GL2xGL4}, from which we deduce Theorem~\ref{thm:main}(2).

In Section~\ref{sec:3}, we prove Theorem~\ref{thm:GL5xGL3}, from which we deduce Theorem~\ref{thm:main}(3).

\subsection*{Acknowledgements}

The authors thank Jeffrey Hoffstein, Wenzhi Luo, and Djordje Mili{\'c}evi{\'c} for helpful conversations.  JT is partially funded by the Simons Foundation (MP-TSM-00002484) and the National Science Foundation (DMS-2401311).

\section{Analytic properties of $L$-functions}
\label{sec:Properties}

Let $F$ be a number field with absolute discriminant $D_F$.  For a place $v$ of $F$, let $F_v$ be the associated completion.  For each place $v$ of $F$, we write $v\mid\infty$ (resp. $v\nmid \infty$) if $v$ is archimedean (resp. non-archimedean).  For $v\nmid\infty$, $q_v$ is the cardinality of the residue field of the local ring of integers $\mathcal{O}_v\subseteq F_v$, and $\varpi_v$ is the uniformizer.  The properties of $L$-functions given here rely on~\cite{GodementJacquet,JPSS,MoeglinWaldspurger}.

\subsection{Standard $L$-functions}
\label{subsec:standard}

Let $\pi\in\mathfrak{F}_{n}$, let $\widetilde{\pi}\in\mathfrak{F}_{n}$ be the contragredient, and let $\omega_{\pi}$ the central character.  We express $\pi$ as a restricted tensor product $\bigotimes_v \pi_v$ of smooth admissible representations of $\mathrm{GL}_n(F_v)$.  Let $\delta_{\pi}=1$ if $\pi=\mathbbm{1}$ and $\delta_{\pi}=0$ otherwise.  Define the sets $S_{\pi}=\{v\nmid\infty\colon \textup{$\pi_v$ ramified}\}$ and $S_{\pi}^{\infty}=S_{\pi}\cup\{v\mid\infty\}$.  Let $N_{\pi}$ be the norm of the conductor of $\pi$.  If $v\nmid\infty$, then there are $n$ Satake parameters $(\alpha_{j,\pi}(v))_{j=1}^n$ such that
	\[
	L(s,\pi)=\prod_{v\nmid\infty}L(s,\pi_{v}),\qquad L(s,\pi_{v}) = \prod_{j=1}^{n}\frac{1}{1-\alpha_{j,\pi}(v)q_v^{-s}}
	\]
	converges absolutely for $\mathrm{Re}(s)>1$.  If $v\in S_{\pi}$, then at least one of the $\alpha_{j,\pi}(v)$ equals zero.
	
If $v\mid\infty$, then $(\mu_{j,\pi}(v))_{j=1}^n$ are the Langlands parameters at $v$, from which we define
\[
\Gamma_v(s)=\begin{cases}
	\pi^{-s/2}\Gamma(s/2)&\mbox{if $F_v=\mathbb{R}$,}\\
	2(2\pi)^{-s}\Gamma(s)&\mbox{if $F_v=\mathbb{C}$,}
	\end{cases}\qquad L(s,\pi_{\infty}) = \prod_{v\mid\infty}L(s,\pi_v) = \prod_{v\mid\infty}\prod_{j=1}^n \Gamma_v(s+\mu_{j,\pi}(v)).
	\]
	The completed $L$-function
	\[
	\Lambda(s,\pi)=(s(1-s))^{\delta_{\pi}}(D_F^n N_{\pi})^{\frac{s}{2}}L(s,\pi)L(s,\pi_{\infty})
	\]
	is entire of order $1$, and there exists a complex number $W(\pi)$ of modulus $1$ such that
	\[
	\Lambda(s,\pi)=W(\pi)\Lambda(1-s,\tilde{\pi}).
	\]
	Since $\{\alpha_{j,\widetilde{\pi}}(v)\}=\{\overline{\alpha_{j,\pi}(v)}\}$, $N_{\widetilde{\pi}}=N_{\pi}$, and $\{\mu_{j,\widetilde{\pi}}(v)\}=\{\overline{\mu_{j,\pi}(v)}\}$, we have that $L(s,\widetilde{\pi})=\overline{L(\overline{s},\pi)}$.  The analytic conductor is
\begin{equation}
\label{eqn:AC_def}
\mathfrak{C}_{\pi}=D_F^n N_{\pi} \prod_{v\mid\infty}\prod_{j=1}^n (|\mu_{j,\pi}(v)|+3)^{[F_v:\mathbb{R}]}.
\end{equation}
By~\cite{LRS,MullerSpeh} there exists $\theta_n\in[0,\frac{1}{2}-\frac{1}{n^2+1}]$ such that
\begin{equation}
\label{eqn:Ramanujan1}
|\alpha_{j,\pi}(v)|\leq q_v^{\theta_{n}},\qquad \mathrm{Re}(\mu_{j,\pi}(v))\geq -\theta_{n}.
\end{equation}
We define $a_{\pi}(v^{\ell})$ by the Dirichlet series identity
\[
-\frac{L'}{L}(s,\pi)=\sum_{v \nmid\infty}\sum_{\ell=1}^{\infty}\frac{\sum_{j=1}^n \alpha_{j,\pi}(v)^{\ell}\log q_v}{q_v^{\ell s}}=\sum_{v\nmid\infty}\sum_{\ell=1}^{\infty}\frac{a_{\pi}(v^{\ell})\log q_v}{q_v^{\ell s}}.
\]

\subsection{Rankin--Selberg $L$-functions}
\label{subsec:RS}

Let $\pi\in\mathfrak{F}_{n}$ and $\pi'\in\mathfrak{F}_{n'}$.  Let
\[
\delta_{\pi\times\pi'}=\begin{cases}
1&\mbox{if $\pi'=\widetilde{\pi}$,}\\
0&\mbox{otherwise.}
\end{cases}
\]
For each $v\notin S_{\pi}^{\infty}\cup S_{\pi'}^{\infty}$, define
\[
L(s,\pi_{v}\times\pi_{v}')=\prod_{j=1}^n \prod_{j'=1}^{n'}\frac{1}{1-\alpha_{j,\pi}(v)\alpha_{j',\pi'}(v)q_v^{-s}}.
\]
Jacquet, Piatetski-Shapiro, and Shalika proved the following theorem.

\begin{theorem}\cite{JPSS}
\label{thm:JPSS}
If $(\pi,\pi')\in\mathfrak{F}_{n}\times\mathfrak{F}_{n'}$, then there exist
\begin{enumerate}
\item complex numbers $(\alpha_{j,j',\pi\times\pi'}(v))_{j=1}^n{}_{j'=1}^{n'}$ for each $v\in S_{\pi}\cup S_{\pi'}$, from which we define
	\begin{align*}
	L(s,\pi_v\times\pi_v') = \prod_{j=1}^n \prod_{j'=1}^{n'}\frac{1}{1-\alpha_{j,j',\pi\times\pi'}(v)q_v^{-s}},\quad L(s,\widetilde{\pi}_v\times\widetilde{\pi}_v') = \prod_{j=1}^n \prod_{j'=1}^{n'}\frac{1}{1-\overline{\alpha_{j,j',\pi\times\pi'}(v)}q_v^{-s}};
	\end{align*}
\item complex numbers $(\mu_{j,j',\pi\times\pi'}(v))_{j=1}^{n}{}_{j'=1}^{n'}$ for each $v\mid\infty$, from which we define
	\begin{align*}
	L(s,\pi_{v}\times\pi_{v}') = \prod_{j=1}^n \prod_{j'=1}^{n'}\Gamma_v(s+\mu_{j,j',\pi\times\pi'}(v)),\quad L(s,\widetilde{\pi}_{v}\times\widetilde{\pi}_{v}') = \prod_{j=1}^n \prod_{j'=1}^{n'}\Gamma_v(s+\overline{\mu_{j,j',\pi\times\pi'}(v)});
	\end{align*}
\item a conductor, an integral ideal whose norm is denoted $N_{\pi\times\pi'}=N_{\widetilde{\pi}\times\widetilde{\pi}'}$; and
\item a complex number $W(\pi\times\pi')$ of modulus $1$
\end{enumerate}
such that the Rankin--Selberg $L$-functions
\[
L(s,\pi\times\pi')=\prod_{v\nmid\infty}L(s,\pi_v\times\pi_v'),\qquad L(s,\widetilde{\pi}\times\widetilde{\pi}')=\prod_{v\nmid\infty}L(s,\widetilde{\pi}_v\times\widetilde{\pi}_v')
\]
converge absolutely for $\mathrm{Re}(s)>1$, the completed $L$-functions
\begin{align*}
\Lambda(s,\pi\times\pi') &= (s(1-s))^{\delta_{\pi\times\pi'}} (D_F^{n'n}N_{\pi\times\pi'})^{\frac{s}{2}} L(s,\pi\times\pi')\prod_{v\mid\infty}L(s,\pi_v\times\pi_{v}')\\
\Lambda(s,\widetilde{\pi}\times\widetilde{\pi}') &= (s(1-s))^{\delta_{\pi\times\pi'}} (D_F^{n'n}N_{\pi\times\pi'})^{\frac{s}{2}}L(s,\widetilde{\pi}\times\widetilde{\pi}')\prod_{v\mid\infty}L(s,\widetilde{\pi}_{v}\times\widetilde{\pi}_{v}')
\end{align*}
are entire of order $1$, and $\Lambda(s,\pi\times\pi')=W(\pi\times\pi')\Lambda(1-s,\widetilde{\pi}\times\widetilde{\pi}')$.
\end{theorem}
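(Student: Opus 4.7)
The plan is to follow the Rankin--Selberg integral representation approach of Jacquet, Piatetski-Shapiro, and Shalika, obtaining the local factors, conductor, root number, functional equation, and entirety of order~$1$ from a single global zeta integral that factors through Whittaker models.

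First, I would develop the local theory at each place $v$. For $\pi_v,\pi_v'$ (possibly ramified, possibly archimedean), fix additive characters and Whittaker models $\mathcal{W}(\pi_v,\psi_v)$, $\mathcal{W}(\pi_v',\psi_v^{-1})$. When $n=n'$ and $W\in\mathcal{W}(\pi_v,\psi_v)$, $W'\in\mathcal{W}(\pi_v',\psi_v^{-1})$, define the local zeta integrals
\[
\Psi(s,W,W',\Phi)=\int_{N_n(F_v)\backslash \mathrm{GL}_n(F_v)} W(g)W'(g)\,\Phi(e_n g)\,|\det g|^{s}\,dg
\]
with an auxiliary Schwartz--Bruhat $\Phi$; when $n>n'$ use the analogous integral over $N_{n'}\backslash \mathrm{GL}_{n'}$ with an additional inner integration. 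I would show that these integrals converge absolutely in a right half-plane, admit meromorphic continuation, and span a fractional ideal of $\mathbb{C}[q_v^{s},q_v^{-s}]$ (respectively an ideal in a ring of archimedean type). The local $L$-factor $L(s,\pi_v\times\pi_v')$ is then defined as the generator of this ideal, so that the normalized integrals $\Psi/L$ are entire and attain the value~$1$. A local functional equation of the form
\[
\frac{\Psi(1-s,\widetilde{W},\widetilde{W}',\widehat{\Phi})}{L(1-s,\widetilde{\pi}_v\times\widetilde{\pi}_v')}=\omega_{\pi_v'}(-1)^{n-1}\,\varepsilon(s,\pi_v\times\pi_v',\psi_v)\,\frac{\Psi(s,W,W',\Phi)}{L(s,\pi_v\times\pi_v')}
\]
yields the local $\varepsilon$-factor. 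At unramified non-archimedean $v$, an explicit computation with spherical Whittaker functions (Casselman--Shalika) recovers the standard Euler product, extracting the Satake parameters $\alpha_{j,j',\pi\times\pi'}(v)$. At archimedean places, analogous work with Jacquet integrals yields the Langlands parameters $\mu_{j,j',\pi\times\pi'}(v)$ and the gamma factors.

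Next, I would globalize. Choose cusp forms $\varphi\in\pi$, $\varphi'\in\pi'$, a Schwartz--Bruhat $\Phi$ on $\mathbb{A}_F^n$, and form the global integral
\[
I(s,\varphi,\varphi',\Phi)=\int_{\mathrm{GL}_n(F)\backslash \mathrm{GL}_n(\mathbb{A}_F)} \varphi(g)\,\varphi'(g)\,E(g,s,\Phi)\,dg,
\]
where $E(g,s,\Phi)$ is the Epstein--Eisenstein series attached to $\Phi$ (when $n=n'$; the $n>n'$ version uses a mirabolic period). Since $\varphi,\varphi'$ are cuspidal and of rapid decay, $I$ is entire in $s$ except for possible simple poles coming from those of $E$, and those poles contribute precisely when $\pi'=\widetilde{\pi}$, giving the factor $(s(1-s))^{\delta_{\pi\times\pi'}}$. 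Unfolding $I$ against the Fourier expansion of $\varphi'$ yields a Whittaker period that factors as an Euler product of the local integrals $\Psi(s,W_v,W_v',\Phi_v)$. Combined with the previous step, this realizes $\Lambda(s,\pi\times\pi')$ as $(s(1-s))^{\delta_{\pi\times\pi'}}$ times a holomorphic multiple of $I$, proving meromorphic continuation to $\mathbb{C}$ with only the stated poles, hence entirety of $\Lambda$. The global functional equation follows from the functional equation $\Phi\mapsto\widehat\Phi$ of the Eisenstein series (equivalently, Poisson summation), combined with the product of local functional equations; this identifies the conductor $N_{\pi\times\pi'}$ (as a product of local conductor exponents) and the global root number $W(\pi\times\pi')$ as the product of local $\varepsilon$-factors evaluated at $s=\tfrac12$.

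Finally, I would verify that $\Lambda(s,\pi\times\pi')$ is of order~$1$. Here I would combine the exponential bounds on the gamma factors with growth estimates on $I(s,\varphi,\varphi',\Phi)$ coming from the rapid decay of $\varphi\varphi'$ and standard Phragm\'en--Lindel\"of arguments in vertical strips, reducing the order from above to~$1$; a matching lower bound follows from the divergence of the zero-counting integral in the critical strip. The main obstacles I anticipate are (i) the archimedean local theory, where one must show that the $\Psi/L$ quotients remain bounded in vertical strips and the local $L$-factor is genuinely a greatest common divisor of the integrals, and (ii) the case $n\neq n'$, which requires the correct choice of mirabolic integral and a careful unfolding so that convergence and the identification with the Euler product both go through. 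Once these are handled, all four conclusions of the theorem follow simultaneously from the integral representation.
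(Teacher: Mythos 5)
This statement is quoted from the literature: the paper gives no proof of Theorem~\ref{thm:JPSS}, simply citing \cite{JPSS} (together with the sources listed at the start of Section~\ref{sec:Properties}), and your plan is a faithful outline of exactly that Rankin--Selberg integral-representation argument (local zeta integrals and $L$-factors as ideal generators, unramified Casselman--Shalika computation, global unfolding against an Eisenstein series or mirabolic period, Poisson summation for the functional equation, with the pole and order-one statements resting on the archimedean theory and boundedness-in-vertical-strips issues you correctly flag). So your proposal takes essentially the same route as the paper's source, and the anticipated obstacles you name are precisely the points handled in the cited works.
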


It follows from Theorem~\ref{thm:JPSS} that
\begin{equation}
	\label{eqn:dual}
	L(s,\widetilde{\pi}\times\widetilde{\pi}')=\overline{L(\overline{s},\pi\times\pi')}
	\end{equation}
The following bounds hold:
	\begin{equation}
		\label{eqn:Ramanujan2}
		\begin{aligned}
	|\alpha_{j,j',\pi\times\pi'}(v)|\leq  q_v^{\theta_n+\theta_{n'}},\qquad \mathrm{Re}(\mu_{j,j',\pi\times\pi'}(v))\geq -(\theta_n+\theta_{n'}).
	\end{aligned}
	\end{equation}
	If $\ell\geq 1$ is an integer and $v\nmid\infty$, then we define
	\begin{equation}
	\label{eqn:a_def}
	\begin{aligned}
	a_{\pi\times\pi'}(v^{\ell})&= \begin{cases}
 	a_{\pi}(v^{\ell})a_{\pi'}(v^{\ell})&\mbox{if $v\notin S_{\pi}\cup S_{\pi'}$,}\\
 	\sum_{j=1}^n \sum_{j'=1}^{n'}\alpha_{j,j',\pi\times\pi'}(v)^{\ell}&\mbox{if $v\in S_{\pi}\cup S_{\pi'}$,}
 \end{cases}\\
 a_{\widetilde{\pi}\times\widetilde{\pi}'}(v^{\ell})&=\overline{a_{\pi\times\pi'}(v^{\ell})}.
 \end{aligned}
	\end{equation}
	We have the Dirichlet series identity
	\begin{equation}
		\label{eqn:log_deriv}
	-\frac{L'}{L}(s,\pi\times\pi')=\sum_{v\nmid\infty}\sum_{\ell=1}^{\infty}\frac{a_{\pi\times\pi'}(v^{\ell})\log q_v}{q_v^{\ell s}},\qquad\mathrm{Re}(s)>1.
	\end{equation}
	
\subsection{Isobaric sums}
\label{subsec:isobaric}

Let $r\geq 1$ be an integer.  For $1\leq j\leq r$, let $\pi_{j}\in\mathfrak{F}_{d_j}$.  Langlands associated to $(\pi_1,\ldots,\pi_r)$ an automorphic representation of $\mathrm{GL}_{d_1+\cdots+d_r}(\mathbb{A}_F)$, the isobaric sum $\Pi=\pi_1\boxplus\cdots\boxplus\pi_r$.  Its $L$-function is
\[
L(s,\Pi)=\prod_{j=1}^r L(s,\pi_j),
\]
and its contragredient is $\widetilde{\pi}_1\boxplus\cdots\boxplus\widetilde{\pi}_r$.  Let $\mathfrak{A}_{n}$ be the set of isobaric automorphic representations of $\mathrm{GL}_n(\mathbb{A}_F)$.  If $\Pi=\pi_1\boxplus\cdots\boxplus\pi_r\in\mathfrak{A}_n$ and $\Pi'=\pi_1'\boxplus\cdots\boxplus\pi_{r'}'\in\mathfrak{A}_{n'}$, then
\begin{equation}
\label{eqn:isobaric_RS}
L(s,\Pi\times\Pi')=\prod_{j=1}^r \prod_{k=1}^{r'} L(s,\pi_j\times\pi_k').
\end{equation}
\begin{lemma}\cite{HoffsteinRamakrishnan}
\label{lem:HR}
If $\Pi\in\mathfrak{A}_{n}$, then $-\frac{L'}{L}(s,\Pi\times\widetilde{\Pi})$ has non-negative Dirichlet coefficients.
\end{lemma}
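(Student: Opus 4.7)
The plan is to combine the isobaric factorization \eqref{eqn:isobaric_RS} with \eqref{eqn:log_deriv} to reduce the claim to a place-by-place non-negativity of Dirichlet coefficients. Writing $\Pi = \pi_1 \boxplus \cdots \boxplus \pi_r$ with $\pi_j \in \mathfrak{F}_{d_j}$ and recalling that $\widetilde{\Pi} = \widetilde{\pi}_1 \boxplus \cdots \boxplus \widetilde{\pi}_r$ gives
\[
-\frac{L'}{L}(s, \Pi \times \widetilde{\Pi}) = \sum_{j, k = 1}^r -\frac{L'}{L}(s, \pi_j \times \widetilde{\pi}_k) = \sum_{v \nmid \infty} \sum_{\ell = 1}^\infty \frac{\log q_v}{q_v^{\ell s}} \sum_{j, k = 1}^r a_{\pi_j \times \widetilde{\pi}_k}(v^\ell),
\]
so the goal becomes $\sum_{j, k = 1}^r a_{\pi_j \times \widetilde{\pi}_k}(v^\ell) \geq 0$ for every $v \nmid \infty$ and every $\ell \geq 1$.

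For the unramified case the argument will be a direct sum-of-squares. At $v \notin \bigcup_j S_{\pi_j}$, the first line of \eqref{eqn:a_def} combined with the identity $a_{\widetilde{\pi}_k}(v^\ell) = \overline{a_{\pi_k}(v^\ell)}$ yields
\[
\sum_{j, k = 1}^r a_{\pi_j \times \widetilde{\pi}_k}(v^\ell) = \sum_{j, k = 1}^r a_{\pi_j}(v^\ell) \, \overline{a_{\pi_k}(v^\ell)} = \Bigl| \sum_{j = 1}^r a_{\pi_j}(v^\ell) \Bigr|^2 \geq 0.
\]

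The ramified case is the main obstacle, since the JPSS local factor of Theorem~\ref{thm:JPSS} is no longer a naive product over Satake parameters. To handle it, I would invoke the local Langlands correspondence to replace each $\pi_{j, v}$ by a Weil--Deligne parameter $\rho_{j, v}$ on $V_j$, so that $L(s, \pi_{j, v} \times \widetilde{\pi}_{k, v}) = L(s, \rho_{j, v} \otimes \rho_{k, v}^\vee)$. Taking the product over $j, k$ identifies
\[
\prod_{j, k = 1}^r L(s, \pi_{j, v} \times \widetilde{\pi}_{k, v}) = L(s, \rho_v \otimes \rho_v^\vee), \qquad \rho_v := \bigoplus_{j = 1}^r \rho_{j, v} \text{ acting on } V := \bigoplus_{j = 1}^r V_j,
\]
whose $v^\ell$-th Dirichlet coefficient equals the trace of $\mathrm{Frob}_v^\ell$ on the inertia-invariant monodromy kernel of $\mathrm{End}(V)$. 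Decomposing $V$ into inertia-isotypic pieces $V = \bigoplus_\sigma (\sigma \otimes W_\sigma)$ reduces the task to showing $\operatorname{tr}(\mathrm{Frob}_v^\ell \mid \mathrm{End}(W_\sigma^{N = 0})) \geq 0$ for every $\sigma$. The technical heart is the possible failure of local temperedness, which I would address via the Langlands classification of each $\pi_{j, v}$ as a Langlands quotient of an induced representation with tempered inducing data; in the tempered case the Frobenius eigenvalues $\alpha_i$ on $W_\sigma^{N=0}$ lie on the unit circle, and the identity $\sum_{\alpha, \beta}(\alpha \overline{\beta})^\ell = |\sum_\alpha \alpha^\ell|^2$ closes the argument. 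The full local calculation is carried out in detail in~\cite{HoffsteinRamakrishnan}.
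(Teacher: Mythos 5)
The paper itself offers no argument for this lemma; it is quoted directly from \cite{HoffsteinRamakrishnan}. Your reduction via \eqref{eqn:isobaric_RS} and \eqref{eqn:log_deriv} to place-by-place positivity, and your sum-of-squares computation at the unramified places, are correct and are exactly the easy half of the standard argument. The issue is that the ramified half --- which is the actual content of the Hoffstein--Ramakrishnan lemma --- is sketched in a way that does not work as written. First, the space is wrong: the local factor of $\rho_v\otimes\rho_v^{\vee}$ is computed on the inertia invariants of $\ker\bigl(N\otimes 1+1\otimes N^{\vee}\bigr)$ inside $\mathrm{End}(V)$, and this is strictly larger than $\mathrm{End}\bigl(V^{N=0}\bigr)$ whenever $N\neq 0$; e.g.\ for a Steinberg component one has $L(s,\mathrm{St}_v\times\mathrm{St}_v)=\zeta_v(s)\zeta_v(s+1)$, two Euler factors, whereas your space $\mathrm{End}(W_\sigma^{N=0})$ is one-dimensional. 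Consequently the trace you need to bound is \emph{not} of the form $\sum_{\alpha,\beta}(\alpha\overline{\beta})^{\ell}=\bigl|\sum_\alpha\alpha^{\ell}\bigr|^{2}$; positivity still holds, but it requires the segment (Clebsch--Gordan) analysis of $\mathrm{Sp}(m_i)\otimes\mathrm{Sp}(m_j)$, where the truncated sums $\sum_{k=0}^{\min(m_i,m_j)-1}q_v^{-\ell((m_i+m_j-2)/2-k)}$ form a positive semidefinite matrix --- an argument that is genuinely absent from your proposal.

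Second, the non-tempered case is named but not handled. Passing to the Langlands quotient with tempered inducing data and real exponents $b_i$ produces cross terms weighted by $q_v^{-\ell(b_i-b_j)}$, and a Hermitian positive semidefinite matrix conjugated by such asymmetric weights need not yield a non-negative sum (a $2\times 2$ example with entries $\pm1$ already fails). What rescues the argument is unitarity: since each $\pi_{j,v}$ is the local component of a unitary cuspidal representation, $\widetilde{\pi}_{j,v}\cong\overline{\pi}_{j,v}$, so the parameter of the contragredient is the complex conjugate of the parameter and the pairing is genuinely Hermitian (equivalently, the Langlands exponents of a unitary generic representation occur in pairs $\pm t$). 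This input, together with the fact that Frobenius permutes the inertia-isotypic components (inertia is normal, so the reduction ``for every $\sigma$'' needs the orbit structure), is what makes the local positivity go through. As it stands, your sketch asserts the two claims that carry all the difficulty and then defers ``the full local calculation'' to \cite{HoffsteinRamakrishnan}; citing the reference for the lemma is legitimate (it is what the paper does), but the intermediate assertions you interpolate are not correct statements of that calculation.
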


\subsection{Real zeros}
	
We define the analytic conductor
	\[
	\mathfrak{C}_{\pi\times\pi'}=D_F^{n'n}N_{\pi\times{\pi}'}\prod_{v\mid\infty}\prod_{j=1}^n \prod_{j'=1}^{n'} (|\mu_{j,j',\pi\times{\pi}'}(v)|+3)^{[F_v:\mathbb{R}]}.
	\]
	Using~\cite[Theorem 2]{BushnellHenniart} and~\cite[Lemma~A.1]{Wattanawanichkul}, we infer that
	\begin{equation}
		\label{eqn:BH}
		N_{\pi\times\pi'}\mid N_{\pi}^{n'}N_{\pi'}^{n},\qquad \mathfrak{C}_{\pi\times\pi'}\leq \mathfrak{C}_{\pi}^{n'}\mathfrak{C}_{\pi'}^{n},\qquad \mathfrak{C}_{\pi\times(\pi'\otimes\chi)}\leq \mathfrak{C}_{\pi}^{n'}\mathfrak{C}_{\pi'}^{n}\mathfrak{C}_{\chi}^{n'n}.
	\end{equation}

\begin{lemma}
	\label{lem:GHL}
Let $J\geq 1$.  For $j\in\{1,\ldots,J\}$, let $(\pi_j,\pi_j',\chi_j)\in\mathfrak{F}_{n_j}\times\mathfrak{F}_{n_j'}\times\mathfrak{F}_{1}$.  Define
\[
\mathfrak{Q} = \prod_{j = 1}^
   J C (\pi_j) C (\pi_j') C (\chi),\quad \mathfrak {S} = \bigcup_ {j = 1}^J (S_{\pi_j}\cup S_{\pi_j'}\cup S_{\chi_j}),\quad D(s) =  \prod_{j = 1}^J L(s, \pi_j\times(\pi' _j\otimes\chi_j)).
\]
Assume that $D(s)$ is holomorphic on $\mathbb{C}-\{1\}$ with a pole of order $r\geq 1$ at $s=1$.  Write
\begin{equation}
\label{eqn:aDdef}
a_D(v^{\ell})=\sum_{j=1}^J a_{\pi_j\times(\pi_j'\otimes\chi)}(v^{\ell}),\qquad -\frac{D'}{D}(s) = \sum_{v\nmid\infty}\sum_{\ell=1}^{\infty}\frac{a_D(v^{\ell})\log q_v}{q_v^{\ell s}}.
\end{equation}
Let $Q \geq \mathfrak{Q}$.  There exists an effectively computable constant $\Cl[abcon]{GHL}>0$ (depending only on the numbers $n_j$, $n_j'$, and $r$) such that if $\mathrm{Re}(a_D(v^{\ell}))\geq 0$ for all $\ell\geq 1$ and $v\notin\mathfrak{S}$, then $D(\sigma)$ has no zeros in the interval $[1,\infty)$ and at most $r$ zeros in the interval $[1-\Cr{GHL}/\log Q,1)$.
\end{lemma}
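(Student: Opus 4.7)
The plan is a classical de la Vall\'ee Poussin-style argument applied to the product $D(s)$.  For the non-vanishing claim on $[1,\infty)$: when $\sigma>1$, each $L(\sigma,\pi_j\times(\pi_j'\otimes\chi_j))$ is given by an absolutely convergent Euler product (by (\ref{eqn:Ramanujan2})) with non-vanishing local factors, so $D(\sigma)\neq 0$; at $\sigma=1$ the function has a pole of order $r\ge 1$ and is therefore not a zero.

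For the zero-count in $[1-\Cr{GHL}/\log Q,1)$, the plan is to compare two expressions for $-D'/D(\sigma)$ on a well-chosen real $\sigma>1$.  By Theorem~\ref{thm:JPSS}, after multiplying $D$ by $(s(1-s))^{r}$, by the conductor factor $\mathfrak{Q}_D^{s/2}:=\prod_{j}(D_F^{n_jn_j'}N_{\pi_j\times(\pi_j'\otimes\chi_j)})^{s/2}$, and by the product $L_\infty^D(s)$ of archimedean factors, we obtain an entire function $\Lambda_D$ of order $1$ with a functional equation relating it to the corresponding completed function of the contragredients at $1-s$.  Taking the logarithmic derivative of the Hadamard factorization of $\Lambda_D$, and baselining at $s=2$ (where the log derivative is $O(\log Q)$ by absolute convergence of the Euler products), yields for real $\sigma\in(1,2)$
\[
-\frac{D'}{D}(\sigma) \;=\; \frac{r}{\sigma-1} \;-\; \sum_\rho \mathrm{Re}\Bigl(\frac{1}{\sigma-\rho}-\frac{1}{2-\rho}\Bigr) \;+\; \Phi(\sigma), \qquad \Phi(\sigma)=O(\log Q),
\]
where $\rho$ runs over the zeros of $\Lambda_D$.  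The bound on $\Phi$ combines (\ref{eqn:BH}) to control $\tfrac12\log\mathfrak{Q}_D$ with Stirling estimates on $\Gamma'/\Gamma$ together with (\ref{eqn:Ramanujan2}) and (\ref{eqn:AC_def}) to control $L_\infty^{D'}/L_\infty^D(\sigma)$.  Each summand on the right is nonnegative for $\sigma\in(1,2)$ and $\mathrm{Re}\,\rho\le 1$.

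On the Dirichlet-series side, split (\ref{eqn:aDdef}) at $\mathfrak{S}$: the unramified part has nonnegative real part by hypothesis, and the ramified part is absolutely bounded by $O(\log Q)$ using (\ref{eqn:Ramanujan2}) together with $\sum_{v\in\mathfrak{S}}\log q_v=O(\log Q)$ (from (\ref{eqn:BH})).  Hence $\mathrm{Re}(-D'/D(\sigma))\ge -C_1\log Q$, and combining with the identity above gives
\[
\sum_\rho \mathrm{Re}\Bigl(\frac{1}{\sigma-\rho}-\frac{1}{2-\rho}\Bigr) \;\leq\; \frac{r}{\sigma-1} \;+\; C_2\log Q,
\]
with $C_2>0$ depending only on the $n_j, n_j'$, and $r$.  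If $\beta_1,\dots,\beta_{r+1}\in[1-\Cr{GHL}/\log Q,1)$ were all zeros of $\Lambda_D$, then choosing $\sigma=1+2r\Cr{GHL}/\log Q$ and retaining only the contributions from $\beta_1,\dots,\beta_{r+1}$ on the left produces an inequality of the form $\tfrac{1}{2(2r+1)\Cr{GHL}}\le C_2+O(1)$, which fails once $\Cr{GHL}$ is chosen small enough in terms of $C_2$ and $r$.

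The main technical obstacle is the $\Phi(\sigma)=O(\log Q)$ bookkeeping: one must dissect $L_\infty^D$ place by place, apply (\ref{eqn:Ramanujan2}) to the Langlands parameters $\mu_{j,j',\pi_j\times(\pi_j'\otimes\chi_j)}(v)$, and sum cleanly over $v\mid\infty$ and over $j$ so that the archimedean contribution matches the corresponding piece of $\log\mathfrak{Q}\le\log Q$ via (\ref{eqn:AC_def}).  Once this is in hand, the positivity/zero-counting step above is routine.
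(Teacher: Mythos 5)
Your proposal is essentially the paper's proof: the paper disposes of this lemma by citing \cite[Lemma 5.9]{IK} and remarking that the coefficients at $v\in\mathfrak{S}$ are estimated with \eqref{eqn:Ramanujan1} and \eqref{eqn:Ramanujan2}, and what you write out is precisely that Hadamard-factorization-plus-positivity argument, including the correct $O(\log Q)$ treatment of the ramified part (note you do not even need \eqref{eqn:BH} there: $\sum_{v\in\mathfrak{S}}\log q_v\leq\log\mathfrak{Q}\leq\log Q$ directly from the definitions of $\mathfrak{Q}$ and $\mathfrak{S}$). One local slip in your write-up: the assertion that each term $\mathrm{Re}\bigl(\tfrac{1}{\sigma-\rho}-\tfrac{1}{2-\rho}\bigr)$ is nonnegative is false for zeros with large imaginary part, since $x\mapsto x/(x^{2}+\gamma^{2})$ is increasing for $0<x<|\gamma|$; so you cannot justify ``retaining only $\beta_1,\dots,\beta_{r+1}$'' directly from that display. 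The standard repair is either to use the Hadamard constant identity $\mathrm{Re}\,B_D=-\sum_{\rho}\mathrm{Re}(1/\rho)$, which puts the zero sum in the form $\sum_{\rho}\mathrm{Re}\bigl(\tfrac{1}{\sigma-\rho}\bigr)$ with every term genuinely nonnegative for real $\sigma>1\geq\mathrm{Re}\,\rho$, or to observe that $\sum_{\rho}\mathrm{Re}\bigl(\tfrac{1}{2-\rho}\bigr)=O(\log Q)$ (again from the factorization evaluated at $s=2$) and absorb that whole sum into $\Phi(\sigma)$. With that one-line fix, your choice of $\sigma=1+2r\,c/\log Q$ and the resulting contradiction for $c$ small enough go through, and the argument matches \cite[Lemma 5.9]{IK} as adapted in the paper.
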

\begin{proof}
The proof is identical to~\cite[Lemma~5.9]{IK} except that we estimate the contribution from the $a_D(v^{\ell})$ due to $v\in\mathfrak{S}$ using \eqref{eqn:Ramanujan1} and \eqref{eqn:Ramanujan2}.
\end{proof}

\section{Symmetric power lifts}
\label{sec:symm}

\subsection{Symmetric power lifts from $\mathrm{GL}_2$}
\label{subsec:symm}

Let $\pi\in\mathfrak{F}_{2}$.  Define $\mathrm{Sym}^0(\pi)=\mathbbm{1}$ and $\mathrm{Sym}^1(\pi)=\pi$.  For an integer $m\geq 2$ and a place $v$ of $F$, recall the definition of $\Sym^m(\pi_v)$ from Section \ref{sec:intro}, and let
\[
\Sym^m(\pi)=\bigotimes_v \Sym^m(\pi_v).
\]
It is conjectured that $\mathrm{Sym}^{m}(\pi)\in\mathfrak{A}_{m+1}$ with contragredient
\begin{equation}
\label{eqn:contra_sym}
\mathrm{Sym}^{m}(\widetilde{\pi})=\mathrm{Sym}^{m}(\pi)\otimes\overline{\omega}_{\pi}^{m},
\end{equation}
and this conjecture is known for $m\leq 4$.  For $\pi\in\mathfrak{F}_2$, we introduce
\[
A^0(\pi)=\mathbbm{1},~~~A^1(\pi)=\pi,~~~A^2(\pi)=\Sym^2(\pi)\otimes\overline{\omega}_{\pi},~~~A^3(\pi)=\Sym^3(\pi)\otimes\overline{\omega}_{\pi},~~~A^4(\pi)=\Sym^4(\pi)\otimes\overline{\omega}_{\pi}^2.
\]
Using \eqref{eqn:contra_sym}, we record the identities
\[
\tilde{A^j(\pi)}=A^j(\tilde{\pi}),\quad A^1(\tilde{\pi})=\pi\otimes\overline{\omega}_{\pi},\quad A^2(\tilde{\pi})=A^2(\pi),\quad A^3(\tilde{\pi})=A^3(\pi)\otimes\overline{\omega}_{\pi},\quad A^4(\tilde{\pi})=A^4(\pi).
\]

\begin{definition}
Let $\pi\in\mathfrak{F}_2$.
\begin{enumerate}[leftmargin=*]
    \item If there exists a non-trivial quadratic character $\eta=\eta_{\pi}\in\mathfrak{F}_1$ such that $\pi = \pi \otimes \eta$, then $\pi$ is {\bf dihedral}.  If $K/F$ is the quadratic extension associated with $\eta$, then there exists $\xi=\xi_{\pi}$ defined over $K$ such that $\pi=I_K^F(\xi)$, the automorphic induction of $\xi$ from $K$ to $F$.  Let $\theta \in \mathrm{Gal}(K/F)$ be the non-trivial element, and set $\xi' := \xi \circ \theta$. In this case, $\pi$ is {\bf dihedral (by $(\eta,\xi,K)$)}.
    \item If $\pi$ is not dihedral and there exists a non-trivial cubic character $\mu=\mu_{\pi}\in\mathfrak{F}_1$ such that $\Sym^2(\pi) = \Sym^2(\pi) \otimes \mu$, then $\pi$ is {\bf tetrahedral (by $\mu$)}.
    \item If $\pi$ is not dihedral or tetrahedral, and there exists a non-trivial quadratic character $\eta=\eta_{\pi}$ such that $\Sym^3(\pi) = \Sym^3(\pi) \otimes \eta$, then $\pi$ is {\bf octahedral (by $\eta$)}.
    \item If $\pi$ is not dihedral, tetrahedral, or octahedral, then $\pi$ is {\bf not of solvable polyhedral type}.
\end{enumerate}
\end{definition}

Throughout our proofs, we will use the following classification result.

\begin{lemma}
\label{symmetric power decomposition}\cite{GJ,Kim,KimShahidi2,KimShahidi,Ramakrishnan}
\begin{enumerate}[leftmargin=*]
	\item If $\pi\in\mathfrak{F}_{2}$, then $\mathrm{Sym}^2(\pi)\in\mathfrak{A}_{3}$, $\mathrm{Sym}^3(\pi)\in\mathfrak{A}_{4}$, and $\mathrm{Sym}^4(\pi)\in\mathfrak{A}_{5}$.
	\item If $\pi\in\mathfrak{F}_2$, then $\pi$ is dihedral if and only if $\mathrm{Sym}^2(\pi)\notin\mathfrak{F}_{3}$.  If $\pi$ is dihedral by $(\eta,\xi,K)$, then
	\begin{align*}
	A^2(\pi) &= I_K^F(\xi{\xi'}^{-1})\boxplus \eta,\\
	A^3(\pi)&=\begin{cases}
		\pi\otimes\mu\boxplus \pi\otimes\mu\eta &\parbox{0.4\linewidth}{if there exists $\mu\in\mathfrak{F}_1$ such that $\mu^2=\mathbbm{1}$ and $\xi\xi'^{-1}=\mu\circ\mathrm{N}_{K/F}$,}\\
		I_K^F(\xi^2\xi'^{-1})\boxplus\pi &\mbox{otherwise,}
	\end{cases}\\
	A^4(\pi)&=\begin{cases}
		\mathbbm{1}\boxplus		\mathbbm{1}\boxplus\mu\boxplus\eta\boxplus\mu\eta &\parbox{0.4\linewidth}{if there exists $\mu\in\mathfrak{F}_1$ such that $\mu^2=\mathbbm{1}$ and $\xi\xi'^{-1}=\mu\circ\mathrm{N}_{K/F}$,}\\
		\mathbbm{1}\boxplus I_K^F(\xi^3\xi'^{-1})\otimes\overline{\omega}_{\pi}\boxplus I_K^F(\xi^2)\otimes\overline{\omega}_{\pi}&\mbox{otherwise.}
	\end{cases}
	\end{align*}
	\item Let $\pi\in\mathfrak{F}_{2}$ be non-dihedral, in which case $A^2(\pi)\in\mathfrak{F}_{3}$.
	\begin{enumerate}
	\item $\Sym^3(\pi)\notin\mathfrak{F}_4$ if and only if there exists a non-trivial cubic character $\mu\in\mathfrak{F}_1$ such that $\pi$ is tetrahedral by $\mu$, in which case $A^2(\pi)=A^2(\pi)\otimes\mu$, $A^3(\pi)=\pi\otimes\mu \boxplus\pi\otimes\bar{\mu}$, and $A^4(\pi) = A^2(\pi)\boxplus \mu\boxplus\overline{\mu}$.
	\item If $\mathrm{Sym}^3(\pi)\in\mathfrak{F}_{4}$, then $\mathrm{Sym}^4(\pi)\notin\mathfrak{F}_{5}$ if and only if there exists a non-trivial quadratic character $\eta\in\mathfrak{F}_1$ such that $\pi$ is octahedral by $\eta$, in which case there exists a dihedral $\nu=\nu_{\pi}\in\mathfrak{F}_2$ such that $A^3(\pi)=A^3(\pi)\otimes\eta$ and $A^4(\pi) =  \nu \boxplus A^2(\pi)\otimes\eta$.
\end{enumerate}
\item If $\pi\in\mathfrak{F}_2$, then $\pi$ is not of solvable polyhedral type if and only if $A^2(\pi)\in\mathfrak{F}_3$, $A^3(\pi)\in\mathfrak{F}_4$, and $A^4(\pi)\in\mathfrak{F}_5$.
\end{enumerate}
\end{lemma}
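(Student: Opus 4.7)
The plan is to treat the four parts of the lemma in turn, since the statement is essentially a compilation of classification results drawn from the cited references.

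For Part (1), I would simply invoke the cited modularity theorems: $\Sym^2(\pi)\in\fA_3$ is Gelbart--Jacquet~\cite{GJ}; $\Sym^3(\pi)\in\fA_4$ is Kim--Shahidi~\cite{KimShahidi}; and $\Sym^4(\pi)\in\fA_5$ is Kim~\cite{Kim}. These are automatic from the construction of the symmetric-power Euler products and the verification that they admit an automorphic realization.

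For Part (2), with $\pi=I_K^F(\xi)$ dihedral by $(\eta,\xi,K)$, the decompositions follow from the tensor-product identity for induced representations
\[
I_K^F(\xi_1)\otimes I_K^F(\xi_2)=I_K^F(\xi_1\xi_2)\boxplus I_K^F(\xi_1\xi_2'),
\]
combined with the iterated Clebsch--Gordan relation
\[
\pi\otimes\Sym^{m-1}(\pi)=\Sym^m(\pi)\boxplus\Sym^{m-2}(\pi)\otimes\omega_\pi.
\]
Starting from $\Sym^1(\pi)=\pi$, one inductively expresses each $A^m(\pi)$ as an isobaric sum of terms $I_K^F(\xi^a{\xi'}^b)$, possibly twisted by characters of $F^\times$. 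A summand $I_K^F(\chi)$ whose character $\chi$ is Galois-invariant splits further as $\chi_0\boxplus\chi_0\eta$ on $F$. The dichotomy in the stated formulas for $A^3(\pi)$ and $A^4(\pi)$ reflects precisely whether $\xi{\xi'}^{-1}$ factors through $N_{K/F}$, i.e., whether there exists a quadratic $\mu$ with $\xi{\xi'}^{-1}=\mu\circ N_{K/F}$.

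For Part (3), the non-dihedral case, I would appeal to the cuspidality criteria of Kim--Shahidi~\cite{KimShahidi,KimShahidi2} and Ramakrishnan~\cite{Ramakrishnan}. The failure of $\Sym^3(\pi)$ to be cuspidal is equivalent to $A^2(\pi)$ admitting a non-trivial cubic self-twist $\mu$ (tetrahedral case); $A^2(\pi)$ is then automorphically induced from the cubic cyclic extension cut out by $\mu$, and the claimed decompositions of $A^3(\pi)$ and $A^4(\pi)$ follow. Assuming $\Sym^3(\pi)\in\fF_4$, failure of $\Sym^4(\pi)$ to be cuspidal is likewise equivalent to $A^3(\pi)$ admitting a quadratic self-twist $\eta$ (octahedral case), and Kim's analysis of the exterior-square lift of $\Sym^3(\pi)$ identifies the dihedral cuspidal summand $\nu_\pi\in\fF_2$ inside $A^4(\pi)$. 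Part (4) is then an immediate consequence of (2) and (3): $\pi$ is not of solvable polyhedral type exactly when none of the self-twist obstructions from (2)--(3) occur, equivalently when $A^2(\pi)\in\fF_3$, $A^3(\pi)\in\fF_4$, and $A^4(\pi)\in\fF_5$.

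The primary technical obstacle is the bookkeeping in Part (2), where one must carefully identify, for each $m$, which induced summands $I_K^F(\chi)$ happen to be Galois-invariant and thus split, and then pin down the correct descent characters on $F^\times$ (as opposed to merely their twists by $\eta$). The $A^4(\pi)$ formula is especially delicate: the two sub-cases produce qualitatively different decompositions and hinge on a precise analysis of when $\xi{\xi'}^{-1}$ is a norm lift, which also governs whether $\pi$ is both dihedral and tetrahedral.
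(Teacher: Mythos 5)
The paper offers no proof of this lemma at all --- it is stated purely as a compilation of results from the cited references (Gelbart--Jacquet for $\mathrm{Sym}^2$, Kim and Kim--Shahidi for $\mathrm{Sym}^3$ and $\mathrm{Sym}^4$ and their cuspidality criteria, Ramakrishnan for the dihedral/induction identities) --- and your proposal assembles exactly those sources in the intended way: part (1) from the modularity theorems, part (2) from the induction formula $I_K^F(\xi_1)\boxtimes I_K^F(\xi_2)=I_K^F(\xi_1\xi_2)\boxplus I_K^F(\xi_1\xi_2')$ together with Clebsch--Gordan and the splitting of Galois-invariant induced characters (with the dichotomy governed by whether $\xi{\xi'}^{-1}$ factors through $\mathrm{N}_{K/F}$), part (3) from the self-twist cuspidality criteria, and part (4) as a formal consequence. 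Your sketch is correct and matches the paper's (purely citational) treatment, the only difference being that your inductive bookkeeping in part (2) supplies detail the paper leaves entirely to the references.
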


Since $\Sym^m(\pi_v)$ is well-defined for each place $v$ of $F$ and each $m\geq 1$, as described in Section \ref{sec:intro}, the following lemma holds for all unramified places $v$ and all symmetric powers.

\begin{lemma}
\label{lem:CG}
If $j,k\geq 0$ are integers, $\pi\in\mathfrak{F}_{2}$ has central character $\omega$, $\chi\in\mathfrak{F}_1$, and $v\notin S_{\pi}^{\infty}\cup S_{\chi}^{\infty}$ is a place of $F$, then
\[
\mathrm{Sym}^{j}(\pi_v)\otimes\mathrm{Sym}^k(\pi_v)\otimes\chi_v=\bigoplus_{r=0}^{\min\{j,k\}}\mathrm{Sym}^{j+k-2r}(\pi_v)\otimes\omega_{v}^{r}\chi_v.
\]
In particular, if $j,k\in\{0,1,2\}$, then
\[
A^j(\pi_v)\otimes A^k(\widetilde{\pi}_v)\otimes\chi_v = \begin{cases}
\bigoplus_{r=0}^{\min\{j,k\}} A^{j+k-2r}(\pi_v)\otimes\chi_v&\mbox{if $(j,k)\notin\{(0,1),(2,1)\}$,}\vspace{1mm}\\
\bigoplus_{r=0}^{\min\{j,k\}} A^{j+k-2r}(\pi_v)\otimes \bar{\omega}_{\pi}\chi_v&\mbox{if $(j,k)\in\{(0,1),(2,1)\}$.}
\end{cases}
\]
\end{lemma}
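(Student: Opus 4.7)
\medskip

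\noindent\textbf{Proof proposal for Lemma~\ref{lem:CG}.}

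The plan is to reduce everything to the classical Clebsch--Gordan decomposition for $\mathrm{GL}_2(\mathbb{C})$ and then unwind the definitions of $A^j$ to verify the consequence in the second display. Since $v\notin S_{\pi}^{\infty}\cup S_{\chi}^{\infty}$, the representations $\pi_v$ and $\chi_v$ are unramified, so $\pi_v$ is determined up to isomorphism by its Satake class $A(\pi_v)\in\mathrm{GL}_2(\mathbb{C})$ (with $\det A(\pi_v)=\omega_v(\varpi_v)$), and $\mathrm{Sym}^m(\pi_v)$ corresponds to $\mathrm{Sym}^m\circ A(\pi_v)$ as described in Section~\ref{sec:intro}.

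First I would establish the first display. As representations of $\mathrm{GL}_2(\mathbb{C})$ one has the classical decomposition
\[
\mathrm{Sym}^j\otimes\mathrm{Sym}^k \;\cong\; \bigoplus_{r=0}^{\min\{j,k\}}\mathrm{Sym}^{j+k-2r}\otimes(\det)^{r},
\]
which can be verified, for instance, by comparing characters on a diagonal matrix $\mathrm{diag}(x,y)$ and recognizing the Jacobi identity
\[
\frac{x^{j+1}-y^{j+1}}{x-y}\cdot\frac{x^{k+1}-y^{k+1}}{x-y}
= \sum_{r=0}^{\min\{j,k\}}(xy)^{r}\cdot\frac{x^{j+k-2r+1}-y^{j+k-2r+1}}{x-y}.
\]
Evaluating at the Satake class $A(\pi_v)$, whose determinant is $\omega_v(\varpi_v)$, transforms $\det^{r}$ into $\omega_v^{r}$ at the level of the unramified representation. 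Tensoring by $\chi_v$ yields the first identity.

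For the second identity, I would expand $A^j(\pi_v)$ and $A^k(\widetilde{\pi}_v)$ using the definitions
\[
A^1=\pi,\quad A^2=\mathrm{Sym}^2(\pi)\otimes\bar\omega_\pi,
\]
together with the unramified identity $\widetilde{\pi}_v=\pi_v\otimes\bar{\omega}_v$ (equivalently $\mathrm{Sym}^k(\widetilde{\pi}_v)=\mathrm{Sym}^k(\pi_v)\otimes\bar{\omega}_v^{k}$). This writes $A^j(\pi_v)\otimes A^k(\widetilde{\pi}_v)\otimes\chi_v$ as $\mathrm{Sym}^j(\pi_v)\otimes\mathrm{Sym}^k(\pi_v)$ twisted by an explicit power of $\bar\omega_v$ times $\chi_v$, where the exponent of $\bar\omega_v$ contributed by the $A$-normalizations equals $\lfloor j/2\rfloor+\lfloor k/2\rfloor+k$ (accounting for the $\bar\omega^{\lfloor\cdot/2\rfloor}$ in $A^2$ and the $\bar\omega_v^{k}$ from $\widetilde{\pi}_v$). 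Applying the first identity of the lemma and then re-packaging each summand $\mathrm{Sym}^{j+k-2r}(\pi_v)\otimes\omega_v^{r}$ back into $A^{j+k-2r}(\pi_v)$ (using $A^{m}=\mathrm{Sym}^{m}(\pi)\otimes\bar\omega_\pi^{\lfloor m/2\rfloor}$ for $m\in\{0,1,2,3,4\}$) collapses the total central-character twist. A direct check on the nine pairs $(j,k)\in\{0,1,2\}^{2}$ then shows that the residual twist is trivial except in the two cases $(0,1)$ and $(2,1)$, where exactly one factor of $\bar\omega_v$ survives; these are precisely the pairs for which $\lfloor j/2\rfloor+\lfloor k/2\rfloor+k$ has opposite parity to $\lfloor (j+k-2r)/2\rfloor$ uniformly in $r$.

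The content of the argument is entirely representation-theoretic and requires no new input beyond Clebsch--Gordan and the definitions; the only mildly subtle step is the central-character bookkeeping in the final case analysis, which explains the apparent asymmetry singling out $(0,1)$ and $(2,1)$.
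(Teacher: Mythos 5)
Your overall route is the one the paper itself takes: its proof of Lemma~\ref{lem:CG} is a one-line appeal to the Clebsch--Gordan identities, and your verification of the first display (comparing characters at the Satake class, with $\det$ becoming $\omega_v$, then twisting by $\chi_v$) is exactly that and is fine.

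The problem is in your central-character bookkeeping for the second display, which as stated does not yield the lemma. The operator $A^k$ applied to $\widetilde{\pi}$ is normalized by the central character of $\widetilde{\pi}$, namely $\omega_{\widetilde{\pi}}=\bar{\omega}_{\pi}$, so at an unramified place $A^k(\widetilde{\pi}_v)=\Sym^k(\widetilde{\pi}_v)\otimes\omega_v^{\lfloor k/2\rfloor}=\Sym^k(\pi_v)\otimes\bar{\omega}_v^{\,k-\lfloor k/2\rfloor}$ (consistent with the identity $A^2(\widetilde{\pi})=A^2(\pi)$ recorded in Section~\ref{subsec:symm}), not $\Sym^k(\widetilde{\pi}_v)\otimes\bar{\omega}_v^{\lfloor k/2\rfloor}$ as your exponent assumes. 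Hence the total twist on $\Sym^j(\pi_v)\otimes\Sym^k(\pi_v)$ is $\bar{\omega}_v^{\lfloor j/2\rfloor+\lceil k/2\rceil}\chi_v$, whereas your exponent $\lfloor j/2\rfloor+\lfloor k/2\rfloor+k$ is too large by $2$ whenever $k=2$: following it literally, every pair $(j,2)$ would acquire a residual $\bar{\omega}_v^{2}$ (e.g.\ $(j,k)=(0,2)$ would give $A^2(\pi_v)\otimes\bar{\omega}_v^{2}\chi_v$ rather than $A^2(\pi_v)\otimes\chi_v$), contradicting the lemma unless $\omega_{\pi}^2=\mathbbm{1}$. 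Relatedly, your final ``opposite parity'' criterion is not the right invariant: $\omega_{\pi}$ is a general unitary Hecke character, so the residual twist is trivial only when the integer exponent vanishes, not when it is even. The correct computation gives, uniformly in $r$, the residual twist $\omega_v^{\,e(j,k)}$ with $e(j,k)=\lfloor (j+k)/2\rfloor-\lfloor j/2\rfloor-\lceil k/2\rceil$, and checking the nine pairs shows $e(j,k)=-1$ exactly for $(j,k)\in\{(0,1),(2,1)\}$ and $e(j,k)=0$ otherwise, which is the asserted dichotomy. So the method is right and matches the paper; the fix is to use $\overline{\omega}_{\widetilde{\pi}}=\omega_{\pi}$ in the normalization of $A^k(\widetilde{\pi})$ and to replace the parity test by exact vanishing of the exponent.
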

\begin{proof}
	These follow from the Clebsch--Gordan identities.
\end{proof}

\begin{lemma}
\label{lem:AC}
If $\pi\in\mathfrak{F}_2$ and $j\leq 4$, then $\log C(A^j(\pi))\ll \log \mathfrak{C}_{\pi}$ and $\log \mathfrak{C}_{\omega_{\pi}}\ll \log \mathfrak{C}_{\pi}$.
\end{lemma}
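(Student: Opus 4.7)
The plan is to bound the analytic conductor of each ingredient of $A^j(\pi)$ place by place, reducing everything to the Langlands parameter of $\pi_v$ at each $v$. There are two main ingredients: controlling $\mathfrak{C}_{\omega_\pi}$, and controlling $\mathfrak{C}_{\mathrm{Sym}^j(\pi)}$ for $j\le 4$. Once both are known, the bound for $A^j(\pi)=\mathrm{Sym}^j(\pi)\otimes\bar\omega_\pi^{k}$ with $k\in\{0,1,2\}$ follows from the twist bound \eqref{eqn:BH}.

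First I would handle $\omega_\pi$. By local Langlands, the parameter of $\omega_\pi$ at $v$ is the determinant of the parameter $\varphi_{\pi_v}$ of $\pi_v$. At non-archimedean $v$, the conductor exponent of $\det\varphi_{\pi_v}$ is at most the conductor exponent of $\pi_v$ (a standard fact about Artin/Swan conductors under the determinant map, e.g.\ a direct consequence of the definition of $a(\varphi)$ via ramification filtration). At $v\mid\infty$, the single Langlands parameter of $\omega_\pi$ equals $\mu_{1,\pi}(v)+\mu_{2,\pi}(v)$, so $|\mu_{\omega_\pi}(v)|+3\ll\prod_{i=1}^{2}(|\mu_{i,\pi}(v)|+3)$. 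Multiplying over $v$ yields $\log\mathfrak{C}_{\omega_\pi}\ll\log\mathfrak{C}_\pi$.

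Next I would treat $\mathrm{Sym}^j(\pi)$ for $j\le 4$. By the local Langlands correspondence (valid for $\mathrm{GL}_n$ at every place), the parameter of $\mathrm{Sym}^j(\pi_v)$ is $\mathrm{Sym}^j\circ\varphi_{\pi_v}$. At $v\mid\infty$, if $\pi_v$ has Langlands parameters $\mu_1,\mu_2$, then $\mathrm{Sym}^j(\pi_v)$ has parameters $\{k\mu_1+(j-k)\mu_2:0\le k\le j\}$, giving $\prod_{k}(|k\mu_1+(j-k)\mu_2|+3)^{[F_v:\R]}\ll_j\prod_{i}(|\mu_i|+3)^{(j+1)[F_v:\R]}$. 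At $v\nmid\infty$, if $v\notin S_\pi$, then $\mathrm{Sym}^j(\pi_v)$ is also unramified. For $v\in S_\pi$, since $\mathrm{Sym}^j(\pi)$ is known to be automorphic for $j\le 4$ (Gelbart--Jacquet, Kim, Kim--Shahidi), the conductor exponent of $\mathrm{Sym}^j(\pi_v)$ is a finite $j$-dependent function of $\varphi_{\pi_v}$; one bounds it by $O_j(a(\pi_v))$ via standard bounds on conductors of $\mathrm{Sym}^j\circ\varphi$ (alternatively, one can iterate Bushnell--Henniart \cite{BushnellHenniart} using $\mathrm{Sym}^2(\pi_v)\oplus \omega_{\pi_v}\cong \pi_v\otimes\pi_v$ and the decompositions in Lemma~\ref{symmetric power decomposition}, so that $N_{\mathrm{Sym}^2(\pi)}$ divides $N_{\pi\times\pi}$, and similarly for $j=3,4$). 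Hence $\log\mathfrak{C}_{\mathrm{Sym}^j(\pi)}\ll_j\log\mathfrak{C}_\pi$.

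Combining the two bounds via \eqref{eqn:BH} gives $\log\mathfrak{C}_{A^j(\pi)}=\log\mathfrak{C}_{\mathrm{Sym}^j(\pi)\otimes\bar\omega_\pi^{k}}\ll \log\mathfrak{C}_{\mathrm{Sym}^j(\pi)}+\log\mathfrak{C}_{\omega_\pi}\ll \log\mathfrak{C}_\pi$, as required. The main obstacle is the non-archimedean conductor bound at $v\in S_\pi$; everything else is a direct calculation with local Langlands parameters. This obstacle is handled cleanly by the Bushnell--Henniart inequality in \eqref{eqn:BH} together with the explicit local/global isobaric decompositions of $\mathrm{Sym}^2$, $\mathrm{Sym}^3$, and $\mathrm{Sym}^4$ already recorded in Lemma~\ref{symmetric power decomposition}.
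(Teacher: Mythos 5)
Your argument is correct in substance, and its essential engine coincides with the paper's, but the execution is different: the paper works globally, using the $L$-function identities $L(s,\pi\times\tilde{\pi})=\zeta_F(s)L(s,A^2(\pi))$, $L(s,A^2(\pi)\times\pi)=L(s,A^3(\pi))L(s,\pi)$, and $L(s,A^2(\pi)\times A^2(\pi))=\zeta_F(s)L(s,A^2(\pi))L(s,A^4(\pi))$ to convert analytic conductors of $A^3(\pi)$ and $A^4(\pi)$ into ratios of Rankin--Selberg analytic conductors, which are then bounded by \eqref{eqn:BH} together with \cite[Theorem A]{RamakrishnanYang}; you instead argue place by place with local Langlands parameters, bounding the archimedean parameters of $\Sym^j(\pi_v)$ and $\omega_{\pi,v}$ directly and the non-archimedean conductor exponents at ramified places. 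The one soft spot in your write-up is the phrase ``standard bounds on conductors of $\Sym^j\circ\varphi$,'' which as stated is not a citable step; however, your parenthetical fix --- iterating Bushnell--Henniart via $\pi_v\otimes\pi_v=\Sym^2(\pi_v)\oplus\omega_{\pi_v}$ and the analogous Clebsch--Gordan/isobaric decompositions for $j=3,4$ --- is exactly the mechanism the paper uses (in global form), so the gap is self-repairing. Two things your local route quietly assumes, and which the paper disposes of by citation, are the local-global compatibilities: that the ramified local components of the automorphic $\Sym^j(\pi)$ ($j\le 4$) are the local Langlands $\Sym^j(\pi_v)$, and that the JPSS Rankin--Selberg conductors agree with (or at least dominate, as in \eqref{eqn:BH}) the conductors of the corresponding tensor products of parameters; this is where \cite{RamakrishnanYang} enters in the paper, and you should either cite such compatibility or phrase everything through \eqref{eqn:BH} as you do in the parenthetical. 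On the plus side, your treatment of $\omega_\pi$ (determinant of the local parameter, termwise comparison of Artin conductors via the ramification filtration, and $\mu_{\omega_\pi}(v)=\mu_{1,\pi}(v)+\mu_{2,\pi}(v)$ at $v\mid\infty$) makes explicit a bound that the paper's proof leaves implicit, and your final twist step $A^j(\pi)=\Sym^j(\pi)\otimes\bar{\omega}_\pi^{k}$ via \eqref{eqn:BH} (applied factor by factor when $\Sym^j(\pi)$ is non-cuspidal) is fine.
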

\begin{proof}
There is nothing to prove when $j=1$.  Otherwise, the $L$-function identities
\begin{equation*}
\begin{gathered}
L(s,\pi\times\tilde{\pi})=\zeta_F(s)\cdot L(s,A^2(\pi)),\qquad L(s,A^2(\pi)\times\pi) = L(s,A^3(\pi))L(s,\pi),\\
L(s,A^2(\pi)\times A^2(\pi))=\zeta_F(s)\cdot L(s,A^2(\pi))\cdot L(s,A^4(\pi)).
\end{gathered}
\end{equation*}
yield the analytic conductor identities
\[
\mathfrak{C}_{A^2(\pi)}=\frac{\mathfrak{C}_{\pi\times\tilde{\pi}}}{\mathfrak{C}_{\mathbbm{1}}},\quad C(A^3(\pi))=\frac{C(A^2(\pi)\times\pi)}{\mathfrak{C}_{\pi}},\quad C(A^4(\pi))=\frac{C(A^2(\pi)\times A^2(\pi))}{\mathfrak{C}_{\mathbbm{1}}\mathfrak{C}_{A^2(\pi)}}.
\]
The claimed bounds now follow from \eqref{eqn:BH} and~\cite[Theorem A]{RamakrishnanYang}.
\end{proof}

\subsection{The symmetric square lift from $\mathrm{GL}_n$}

Let $(\Sigma,\chi)\in\mathfrak{F}_{n}\times\mathfrak{F}_{1}$.  Let $S$ be a set of places containing $S_{\Sigma}^{\infty}\cup S_{\chi}^{\infty}$ and all places dividing $2$.  The $\chi$-twist of the partial $L$-function of the symmetric square representation $\mathrm{Sym}^2\colon \mathrm{GL}_n(\mathbb{C})\to\mathrm{GL}_{n(n+1)/2}(\mathbb{C})$ is
\begin{equation}
\label{eqn:SYM2def}
L^S(s,\Sigma;\mathrm{Sym}^2\otimes\chi)=\prod_{v\notin S}~\prod_{1\leq j\leq k\leq n}\frac{1}{1-\chi_v(\varpi_v)\alpha_{j,\Sigma}(v)\alpha_{k,\Sigma}(v)q_v^{-s}}.
\end{equation}
\begin{theorem}[{\cite[Theorem 7.1]{Takeda}}]
\label{thm:sym2}
Let $\Sigma\in\mathfrak{F}_{n}$ and $\chi\in\mathfrak{F}_{1}$.  If $S$ is a set of places containing $S_{\Sigma}^{\infty}\cup S_{\chi}^{\infty}\cup\{v\mid 2\}$, then $L^S(s,\Sigma;\mathrm{Sym}^2\otimes\chi)$ is holomorphic on $\mathbb{C}$ except possibly for simple poles at $s\in\{0,1\}$.  If $\chi^n \omega_{\Sigma}^2\neq\mathbbm{1}$, then there is no pole.
\end{theorem}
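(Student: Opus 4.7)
The plan is to combine the Rankin--Selberg theory with the known analytic properties of the partial exterior square $L$-function. At each place $v \notin S$, the representation-theoretic decomposition $\mathrm{std}\otimes\mathrm{std}\simeq\mathrm{Sym}^2\oplus\wedge^2$ as $\GL_n(\CC)$-modules, evaluated at the Satake parameters $A(\Sigma_v)$ and twisted by $\chi_v$, gives the local factorization
\[
L(s,\Sigma_v\times(\Sigma_v\otimes\chi_v))=L(s,\Sigma_v;\mathrm{Sym}^2\otimes\chi_v)\,L(s,\Sigma_v;\wedge^2\otimes\chi_v).
\]
Taking partial Euler products over $v\notin S$, this rearranges to
\[
L^S(s,\Sigma;\mathrm{Sym}^2\otimes\chi) \;=\; \frac{L^S(s,\Sigma\times(\Sigma\otimes\chi))}{L^S(s,\Sigma;\wedge^2\otimes\chi)}.
\]

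By Theorem~\ref{thm:JPSS}, the numerator is meromorphic on $\CC$ with at most simple poles at $s\in\{0,1\}$, and a pole at $s=1$ occurs if and only if $\Sigma\otimes\chi\simeq\tilde\Sigma$. Matching central characters ($\chi^n\omega_\Sigma$ versus $\bar\omega_\Sigma$) makes this equivalent to $\chi^n\omega_\Sigma^2=\mathbbm{1}$, so when the central-character identity fails, the numerator is entire. For the denominator, one invokes the meromorphic continuation and pole analysis of the twisted partial exterior square, established through the Langlands--Shahidi method (Shahidi, Kim) or equivalently through the Jacquet--Shalika integral, which shows that $L^S(s,\Sigma;\wedge^2\otimes\chi)$ is holomorphic and nonzero on $\Re(s)=1$ away from $s=1$. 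Dividing gives the meromorphic continuation of the left-hand side with poles only possibly at $s\in\{0,1\}$; the vanishing of the pole at $s=0$ when $\chi^n\omega_\Sigma^2\neq\mathbbm{1}$ is then enforced by the functional equation $s \leftrightarrow 1-s$, which swaps $(\Sigma,\chi)$ with $(\tilde\Sigma,\chi^{-1})$ and preserves the central-character identity.

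The main obstacle along this route is the interaction between possible zeros and poles of the exterior square on the line $\Re(s)=1$ and the desired pole structure of the symmetric square: a zero of $L^S(s,\Sigma;\wedge^2\otimes\chi)$ at $s=1$ would artificially create a pole of $L^S(s,\Sigma;\mathrm{Sym}^2\otimes\chi)$, while a pole of $L^S(s,\Sigma;\wedge^2\otimes\chi)$ there would mask a genuine pole. Ruling this out precisely requires the nonvanishing theorems of Shahidi on the edge of the critical strip together with a careful pole analysis of the exterior square. A cleaner alternative, and the one taken by Takeda, is the Bump--Ginzburg integral representation: one realizes $L^S(s,\Sigma;\mathrm{Sym}^2\otimes\chi)$, up to archimedean and ramified local factors, as a global zeta integral that pairs a cusp form in $\Sigma$ with a theta function attached to $\chi$ against an Eisenstein series on the metaplectic double cover of $\GL_n$. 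The meromorphic continuation and simple poles at $s\in\{0,1\}$ of that Eisenstein series transfer directly to the $L$-function, and the condition $\chi^n\omega_\Sigma^2\neq\mathbbm{1}$ emerges naturally as the obstruction to nonvanishing of the residual theta functional, eliminating both potential poles. This integral-representation approach also explains the exclusion of dyadic places $v\mid 2$ in $S$, since the metaplectic cover and the Weil representation behave anomalously there.
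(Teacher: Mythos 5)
You should first note that the paper does not prove this statement at all: it is quoted, with attribution, from Takeda's Theorem~7.1, so the paper's ``proof'' is a citation. Your second paragraph correctly identifies what actually lies behind the result --- Takeda's twisted generalization of the Bump--Ginzburg integral, pairing a cusp form in $\Sigma$ and a theta function attached to $\chi$ against an Eisenstein series on the metaplectic double cover of $\GL_n$, with the condition $\chi^n\omega_\Sigma^2\neq\mathbbm{1}$ governing the residual pole and the dyadic places excluded because of the local theory of the cover. As an account of the source this is accurate, but it is a pointer to the literature rather than a proof, i.e.\ it does exactly what the paper does.

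The self-contained part of your proposal (the quotient route) has a genuine gap, which your own ``main obstacle'' paragraph understates. From the identity $L^S(s,\Sigma;\mathrm{Sym}^2\otimes\chi)=L^S(s,\Sigma\times(\Sigma\otimes\chi))/L^S(s,\Sigma;\wedge^2\otimes\chi)$ you can only conclude holomorphy of the left-hand side at points where the denominator is nonzero, or where every zero of the denominator is matched by a zero of the numerator of at least the same order. Shahidi-type nonvanishing controls this on the line $\re(s)=1$ only; inside the critical strip $L^S(s,\Sigma;\wedge^2\otimes\chi)$ certainly has zeros, and showing that these are zeros of the Rankin--Selberg $L$-function of sufficient order is tantamount to the holomorphy of the symmetric square that you are trying to prove --- the argument is circular away from the edge of the strip and cannot deliver holomorphy on $\mathbb{C}\setminus\{0,1\}$ as the statement requires. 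In addition, the meromorphic continuation and pole structure of the \emph{twisted} partial exterior square for general $n$ and general $\chi$ is itself an input of comparable depth (Jacquet--Shalika treat the untwisted case), so it cannot simply be invoked off the shelf; and your final functional-equation step concerns the completed $L$-function, whereas the partial $L$-function differs from it by ramified and archimedean factors whose zeros and poles would have to be tracked. (A small further slip: $\Sigma\otimes\chi\simeq\widetilde{\Sigma}$ implies $\chi^n\omega_\Sigma^2=\mathbbm{1}$ but is not equivalent to it; fortunately only the implication you actually use is needed.) In short, the only complete path in your proposal is the appeal to Takeda, which is precisely the citation the paper makes.
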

Our next result uses Theorem~\ref{thm:sym2} in a situation where $L^S(s,\Sigma;\mathrm{Sym}^2\otimes\chi)$ is entire and $\chi^n\omega_{\Sigma}^2=\mathbbm{1}$.
\begin{corollary}
\label{cor:sym2}
If $\pi\in\mathfrak{F}_2$, $\chi\in\mathfrak{F}_1$, and $\pi$ is not of solvable polyhedral type, then
\begin{equation}
\label{eqn:sym2}
\frac{L(s,A^2(\pi)\times(A^4(\pi)\otimes\chi))}{L(s,A^4(\pi)\otimes\chi)}
\end{equation}
is holomorphic in the region $\re(s)>56/65$.
\end{corollary}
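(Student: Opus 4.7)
The plan is to rewrite the quotient as a twisted symmetric-square partial $L$-function on $\GL_4$ and then apply Theorem~\ref{thm:sym2}. Set $A^6(\pi_v):=\Sym^6(\pi_v)\otimes\bar\omega_{\pi,v}^3$. At each unramified place $v$, applying the $\GL_2$ Clebsch--Gordan identity $\Sym^j\otimes\Sym^k=\bigoplus_{r=0}^{\min(j,k)}\Sym^{j+k-2r}\otimes\det^r$ to the Satake parameters of $\pi_v$ gives
\[
A^2(\pi_v)\otimes A^4(\pi_v)\otimes\chi_v\ =\ A^6(\pi_v)\otimes\chi_v\ \oplus\ A^4(\pi_v)\otimes\chi_v\ \oplus\ A^2(\pi_v)\otimes\chi_v.
\]
Separately, using $\Sym^2(\Sym^3)=\Sym^6\oplus\Sym^2\otimes\det^2$ and chasing the twist $\bar\omega_\pi$ through $A^3(\pi_v)=\Sym^3(\pi_v)\otimes\bar\omega_{\pi,v}$, one checks that $\Sym^2(A^3(\pi_v))=A^6(\pi_v)\otimes\omega_{\pi,v}\oplus A^2(\pi_v)\otimes\omega_{\pi,v}$, so after twisting by $\chi_v\bar\omega_{\pi,v}$ the two non-$A^4$ summands above are recovered as $\Sym^2(A^3(\pi_v))\otimes\chi_v\bar\omega_{\pi,v}$.

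Let $S=S_\pi^\infty\cup S_\chi^\infty\cup\{v\mid 2\}$. The local identities combine to the global statement
\[
\frac{L^S(s,A^2(\pi)\times(A^4(\pi)\otimes\chi))}{L^S(s,A^4(\pi)\otimes\chi)}\ =\ L^S(s,A^3(\pi);\Sym^2\otimes\chi\bar\omega_\pi).
\]
Since $\pi$ is not of solvable polyhedral type, Lemma~\ref{symmetric power decomposition} gives $A^3(\pi)\in\mathfrak{F}_4$, so Theorem~\ref{thm:sym2} applies: the right-hand side is holomorphic on $\CC$ except possibly for simple poles at $s\in\{0,1\}$.

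It remains to transfer holomorphy from partial to full $L$-functions and to dispose of the potential pole at $s=1$. The ratios of local factors at $v\in S$ relating $L^S$ to $L$ on both the $\GL_3\times\GL_5$ Rankin--Selberg side and the $\GL_5$ standard side are finite products of $(1-\alpha q_v^{-s})^{\pm 1}$ with $|\alpha|\leq q_v^{\theta_3+\theta_5}$ or $q_v^{\theta_5}$, by \eqref{eqn:Ramanujan1}--\eqref{eqn:Ramanujan2}. Since $\theta_3\leq\tfrac{2}{5}=\tfrac{26}{65}$ and $\theta_5\leq\tfrac{6}{13}=\tfrac{30}{65}$, these factors are holomorphic and nonvanishing in $\Re(s)>56/65$. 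Thus the full-$L$ quotient is holomorphic in $\Re(s)>56/65$ except possibly at $s=1$. But the numerator $L(s,A^2(\pi)\times(A^4(\pi)\otimes\chi))$ is entire because $A^4(\pi)\otimes\chi\in\mathfrak{A}_5$ cannot equal $\widetilde{A^2(\pi)}\in\mathfrak{A}_3$, the denominator $L(s,A^4(\pi)\otimes\chi)$ is entire for the same dimension reason, and $L(1,A^4(\pi)\otimes\chi)\neq 0$ by Jacquet--Shalika~\cite{JS}; hence the quotient is already finite at $s=1$, completing the proof. The main bookkeeping hurdle is tracking all twists by $\omega_\pi$: one must verify that the factor $\chi\bar\omega_\pi$ chosen for the symmetric-square lift of $A^3(\pi)$ precisely converts $\Sym^2(A^3)$ into $A^6\oplus A^2$ twisted by $\chi$, not by some shifted character.
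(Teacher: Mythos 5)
Your proposal is correct and follows essentially the same route as the paper: both identify the quotient (away from a finite set $S$ of bad places) with a $\GL_1$-twisted partial symmetric-square $L$-function of the cuspidal $\Sym^3$-lift via Clebsch--Gordan/plethysm, invoke Theorem~\ref{thm:sym2} to control its poles, bound the local factors at $v\in S$ using \eqref{eqn:Ramanujan1}--\eqref{eqn:Ramanujan2} with $\theta_3+\theta_5\leq 56/65$, and remove the potential pole at $s=1$ using entireness of numerator and denominator together with nonvanishing at $s=1$. The only difference is cosmetic (you phrase the lift as $\Sym^2$ of $A^3(\pi)$ twisted by $\chi\bar{\omega}_\pi$ rather than of $\Sym^3(\pi)$ twisted by $\bar{\omega}_\pi^3\chi$, and you kill the pole at $s=1$ on the quotient directly rather than first transferring holomorphy at $s=1$ to the partial $L$-function), which does not change the substance of the argument.
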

\begin{proof}
Since $\pi$ is not of solvable polyhedral type, $A^2(\pi)\in\mathfrak{F}_3$ and $A^4(\pi)\in\mathfrak{F}_5$.  Therefore, the numerator and denominator of \eqref{eqn:sym2} are each entire and non-vanishing in the half-plane $\re(s)\geq 1$.  It follows that \eqref{eqn:sym2} has no pole at $s=1$.

Proceeding locally, much like in~\cite[Lemma 7.1]{RamakrishnanWang}, we find that if $S$ is a set of places containing $S_{\pi}^{\infty}\cup S_{\chi}^{\infty}\cup\{v\mid 2\}$, then \eqref{eqn:sym2} equals
\begin{equation}
\label{eqn:factor_ratio_Sym4}
L^S(s,\Sym^3(\pi);\Sym^2\otimes\bar{\omega}_{\pi}^3\chi)\prod_{v\in S}\frac{L(s,A^2(\pi_v)\times(A^4(\pi_v)\otimes\chi_v))}{L(s,A^4(\pi_v)\otimes\chi_v)}.
\end{equation}
It follows from \eqref{eqn:Ramanujan1} and \eqref{eqn:Ramanujan2} that the product over $v\in S$ is holomorphic in the region
\[
\re(s)>\Big(\frac{1}{2}-\frac{1}{5^2+1}\Big)+\Big(\frac{1}{2}-\frac{1}{3^2+1}\Big)=\frac{56}{65}.
\]
Since \eqref{eqn:sym2} and the product over $v\in S$ on the right-hand side of \eqref{eqn:factor_ratio_Sym4} are each holomorphic at $s=1$, so is $L^S(s,\Sym^3(\pi);\Sym^2\otimes\bar{\omega}_{\pi}^3\chi)$.  Therefore, by Theorem~\ref{thm:sym2}, $L^S(s,\Sym^3(\pi);\Sym^2\otimes\bar{\omega}_{\pi}^3\chi)$ is holomorphic in the half-plane $\re(s)>1/2$.  We conclude that \eqref{eqn:sym2} is holomorphic in the half-plane $\re(s)>56/65$.
\end{proof}

\subsection{Results on modularity}

Let $\pi\in\mathfrak{F}_n$ and $\pi'\in\mathfrak{F}_{n'}$.  If $L(s,\pi\times\pi')$ is modular, then there exists a representation $\pi\boxtimes\pi'\in \mathfrak{A}_{n'n}$ such that $L(s,\pi\times\pi')=L(s,\pi\boxtimes\pi')$.

\begin{theorem}[\cite{KimShahidi,Ramakrishnan,RamakrishnanWang2}]
\label{thm:Langlands}
If $\pi\in\mathfrak{F}_2\cup\mathfrak{F}_3$ and $\pi'\in\mathfrak{F}_2$, then there exists an isobaric automorphic representation $\pi\boxtimes\pi'$ such that $L(s,\pi\times\pi')=L(s,\pi\boxtimes\pi')$.
\begin{enumerate}[leftmargin=*]
\item If $\pi,\pi'\in\mathfrak{F}_2$ are non-dihedral, then $\pi\boxtimes\pi'$ is cuspidal if and only if $\pi\not\sim\pi'$.
\item  If $\pi\in\mathfrak{F}_3$ and $\pi'\in\mathfrak{F}_2$, then $\pi\boxtimes\pi'$ is non-cuspidal if and only if
\begin{enumerate}
\item $\pi'$ is non-dihedral and $\pi\sim\Sym^2(\pi')$, or
\item $\pi'$ is dihedral, there exists an idele class character $\chi$ of a cubic non-normal extension $K/F$ such that $L(s,\pi)=L(s,\chi)$, and the base change $\pi_K$ is Eisensteinian.
\end{enumerate}
\item If $\pi,\pi' \in \fF_2$ are non-dihedral and $\pi \not\sim\pi'$, then $A^2(\pi) \not\sim A^2(\pi')$ and $\pi \boxtimes A^2(\pi')$ is cuspidal.
\end{enumerate}
\end{theorem}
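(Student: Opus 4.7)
The existence of $\pi\boxtimes\pi'$ splits into two independent constructions according to the rank of $\pi$, while the cuspidality assertions follow uniformly from Jacquet--Shalika pole counting. For $\pi,\pi'\in\mathfrak{F}_2$, the plan is Ramakrishnan's route through the Cogdell--Piatetski-Shapiro converse theorem for $\mathrm{GL}_4$: recognize the degree-four Euler product $L(s,\pi\times\pi')$ as the standard $L$-function of an isobaric representation of $\mathrm{GL}_4(\mathbb{A}_F)$. The hypotheses of the converse theorem reduce to verifying that, for every sufficiently ramified $\eta\in\mathfrak{F}_1\cup\mathfrak{F}_2$, the twists $L(s,\pi\times\pi'\otimes\eta)$ and the triple product $L(s,\pi\times\pi'\times\eta)$ are entire, bounded in vertical strips, and satisfy the expected functional equation. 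Entireness and the functional equation for the triple product come from Shahidi's Langlands--Shahidi analysis of a maximal Levi with semisimple part $\mathrm{SL}_2\times\mathrm{SL}_2\times\mathrm{SL}_2$ inside a simply-connected exceptional group, while boundedness in vertical strips is due to Gelbart--Shahidi. The converse theorem then outputs $\pi\boxtimes\pi'\in\mathfrak{A}_4$.

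For $\pi\in\mathfrak{F}_3$ and $\pi'\in\mathfrak{F}_2$, one runs the Kim--Shahidi analogue with the converse theorem for $\mathrm{GL}_6$, which requires analytic control of twisted $L(s,\pi\times\pi'\times\eta)$ for $\eta$ cuspidal on $\mathrm{GL}_r$ with $1\leq r\leq 4$. The Langlands--Shahidi method supplies this control, since the tensor product of the relevant standard representations appears in the adjoint action of a maximal Levi on a unipotent radical inside a suitable quasi-split exceptional group; holomorphy on $\mathrm{Re}(s)=1$, the functional equation, and boundedness in strips then follow from the standard Langlands--Shahidi package, producing $\pi\boxtimes\pi'\in\mathfrak{A}_6$.

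With existence in hand, cuspidality is a pole-counting computation. Write $\pi\boxtimes\pi'=\sigma_1\boxplus\cdots\boxplus\sigma_k$ as an isobaric sum of distinct cuspidals; then Jacquet--Shalika gives $-\mathrm{ord}_{s=1}L(s,(\pi\boxtimes\pi')\times\widetilde{\pi\boxtimes\pi'})=k$, and the left side equals $L(s,\pi\times\widetilde{\pi}\times\pi'\times\widetilde{\pi}')$. The identity $\pi\boxtimes\widetilde{\pi}=A^2(\pi)\boxplus\mathbbm{1}$ (valid for non-dihedral $\pi\in\mathfrak{F}_2$, by Lemma~\ref{symmetric power decomposition}), and its analogues, expand this four-fold product into factors whose pole orders at $s=1$ can be read off. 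In~(1), the expansion is $\zeta_F(s)\,L(s,A^2(\pi))\,L(s,A^2(\pi'))\,L(s,A^2(\pi)\times A^2(\pi'))$, yielding a simple pole exactly when $A^2(\pi)\not\cong A^2(\pi')$, which for non-dihedral $\pi,\pi'$ is equivalent to $\pi\not\sim\pi'$. In~(2), the corresponding expansion uses $\pi\boxtimes\widetilde{\pi}=\mathrm{Ad}(\pi)\boxplus\mathbbm{1}$ at the $\mathrm{GL}_3$ level, and the self-twist structure of $\pi\in\mathfrak{F}_3$ can make $\mathrm{Ad}(\pi)\in\mathfrak{A}_8$ non-cuspidal; tracking the extra poles recovers exactly the exceptions~(2a) (coincidence with $\mathrm{Sym}^2(\pi')$) and~(2b) (induction from a cubic non-normal extension over a dihedral $\pi'$), as in Ramakrishnan--Wang.

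For part~(3), part~(1) applied to $\pi'$ gives $A^2(\pi')\in\mathfrak{F}_3$. The non-equivalence $A^2(\pi)\not\sim A^2(\pi')$ follows from a rigidity of the adjoint lift on non-dihedral $\mathrm{GL}_2$ cusp forms: if $A^2(\pi')=A^2(\pi)\otimes\chi$, comparing central characters forces $\chi^3=\mathbbm{1}$, and comparing unramified Satake parameters via Clebsch--Gordan together with the tetrahedral self-twist characterization in Lemma~\ref{symmetric power decomposition} forces $\chi=\mathbbm{1}$ and $\pi\sim\pi'$, contradicting the hypothesis. Now apply~(2) to the pair $(A^2(\pi'),\pi)\in\mathfrak{F}_3\times\mathfrak{F}_2$: case~(2a) would require $A^2(\pi')\sim\mathrm{Sym}^2(\pi)=A^2(\pi)\otimes\omega_{\pi}$, i.e.\ $A^2(\pi')\sim A^2(\pi)$, which we just excluded; case~(2b) would require $\pi$ to be dihedral, contradicting the hypothesis. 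Hence $\pi\boxtimes A^2(\pi')$ is cuspidal. The main obstacle throughout is the analytic input: verification of entireness, functional equation, and vertical strip boundedness for the triple product $L$-function and the Langlands--Shahidi $L$-functions on exceptional groups is deep and constitutes the core difficulty; once those ingredients are available, the cuspidality combinatorics is routine.
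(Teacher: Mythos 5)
This statement is quoted in the paper from \cite{KimShahidi,Ramakrishnan,RamakrishnanWang2} with no proof given, so your sketch has to be judged against the arguments in those works; your overall outline (converse theorems for $\GL_4$ and $\GL_6$ fed by Langlands--Shahidi analytic inputs, then Jacquet--Shalika pole counting for cuspidality) is indeed the route taken there, apart from minor attribution slips (the $\GL_2\times\GL_2\times\GL_2$ triple product arises from the $D_4$, i.e.\ $\mathrm{Spin}_8$, case of the Langlands--Shahidi method, not an exceptional group, and Ramakrishnan's control of it also leans on the Garrett/Piatetski-Shapiro--Rallis/Ikeda integral representation).

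There is, however, a genuine gap in how you handle the cuspidality statements (1) and (3). Pole counting only gives: $\pi\boxtimes\pi'$ is cuspidal if and only if $A^2(\pi)\not\cong A^2(\pi')$, since $L(s,(\pi\boxtimes\pi')\times(\widetilde{\pi}\boxtimes\widetilde{\pi}'))=\zeta_F(s)L(s,A^2(\pi))L(s,A^2(\pi'))L(s,A^2(\pi)\times A^2(\pi'))$. Converting this to ``if and only if $\pi\not\sim\pi'$'' requires the implication $A^2(\pi)\cong A^2(\pi')\Rightarrow\pi\sim\pi'$, which is \emph{not} a matter of ``comparing unramified Satake parameters via Clebsch--Gordan'': it is Ramakrishnan's adjoint rigidity (multiplicity one for $\mathrm{SL}_2$) theorem, one of the main results of \cite{Ramakrishnan}, proved using the $\GL_4$ lift and base change. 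The same missing ingredient reappears in your part (3): your central-character argument correctly gives $\chi^3=\mathbbm{1}$, but it does not ``force $\chi=\mathbbm{1}$'' --- $\chi$ could be a nontrivial cubic self-twist character, in which case $\pi$ is tetrahedral and $A^2(\pi)\otimes\chi\cong A^2(\pi)$; either way you only reach $A^2(\pi')\cong A^2(\pi)$, and the final step to $\pi\sim\pi'$ again needs the rigidity theorem (alternatively, once part (1) is in hand, $A^2(\pi)\not\cong A^2(\pi')$ follows from the simple pole of $L(s,(\pi\boxtimes\pi')\times(\widetilde{\pi}\boxtimes\widetilde{\pi}'))$, but you do not make that deduction, and part (1) itself cannot be obtained without rigidity). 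Finally, in part (2) your expansion invokes ``$\mathrm{Ad}(\pi)\in\mathfrak{A}_8$'' for $\pi\in\mathfrak{F}_3$, which is not known (automorphy of $\GL_3\times\GL_3$ products is open), and the identification of the precise exceptional cases (2a)--(2b) is the substance of \cite{RamakrishnanWang2} rather than a routine pole-tracking exercise; deferring to that paper is fine, but the intermediate claim as stated is unjustified.
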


\subsection{Other auxiliary results}

\begin{lemma}
\label{counting pole lemma}
Let $\pi,\pi' \in \fF_2$ be non-dihedral and $\chi \in \fF_1$. Let $\pi \not\sim \pi'$, $m,n\in\{1,2,3,4\}$, and $\mathfrak{o}_{m,n}(\pi,\pi')=-\mathrm{ord}_{s=1}L(s,A^m(\pi) \times (A^n(\pi') \otimes \chi))$.  Then $L(s,A^m(\pi) \times (A^n(\pi') \otimes \chi))$ is entire except possibly in the following cases:
\begin{enumerate}[leftmargin=*]
    \item $m=n=3$ and $\pi,\pi'$ are both non-tetrahedral, in which case $\mathfrak{o}_{m,n}(\pi,\pi')\in \{0,1\}$;
    \item $m=n=4$ and $\pi,\pi'$ are both tetrahedral, in which case $\mathfrak{o}_{m,n}(\pi,\pi')\in\{0,1,2,3,4\}$;
    \item $m=n=4$ and $\pi,\pi'$ are both octahedral, in which case $\mathfrak{o}_{m,n}(\pi,\pi')\in \{0,1\}$; and
    \item $m=n=4$ and $\pi,\pi'$ are both not of solvable polyhedral type, in which case $\mathfrak{o}_{m,n}(\pi,\pi')\in \{0,1\}$.
\end{enumerate}
\end{lemma}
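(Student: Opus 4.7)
The plan is to use Lemma~\ref{symmetric power decomposition} to decompose $A^m(\pi)$ and $A^n(\pi')$ into isobaric sums of cuspidal representations, and then apply \eqref{eqn:isobaric_RS} to factor
\[
L(s, A^m(\pi) \times (A^n(\pi') \otimes \chi)) = \prod_{\tau,\tau'} L(s, \tau \times (\tau' \otimes \chi)),
\]
where $\tau$ ranges over the cuspidal constituents of $A^m(\pi)$ and $\tau'$ over those of $A^n(\pi')$. By Theorem~\ref{thm:JPSS}, each factor with $\tau \in \fF_d$ and $\tau' \in \fF_{d'}$ is entire unless $d = d'$ and $\tau' \otimes \chi = \widetilde{\tau}$, in which case it contributes a simple pole at $s=1$. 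The problem thus reduces to a finite combinatorial check over the sixteen choices $(m,n) \in \{1,2,3,4\}^2$ and the possible polyhedral types of $\pi$ and $\pi'$.

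Two structural facts rule out the vast majority of potential poles. First, $\pi \not\sim \pi'$ together with $\widetilde{\pi} = \pi \otimes \bar\omega_\pi$ for $\pi \in \fF_2$ kills every $\GL_2 \times \GL_2$ pole that would arise from a twist of $\pi$ against a twist of $\pi'$; second, Theorem~\ref{thm:Langlands}(3) upgrades this to $A^2(\pi) \not\sim A^2(\pi')$, which kills the analogous $\GL_3 \times \GL_3$ pole (using also $A^2(\widetilde{\pi}) = A^2(\pi)$, so $A^2(\pi)$ is self-dual). The only $\GL_2$ constituent of any $A^m(\pi)$ that is not a twist of $\pi$ is the dihedral representation $\nu_\pi$ appearing in $A^4(\pi)$ when $\pi$ is octahedral; since $\pi$ is non-dihedral, pairings of $\nu_{\pi'}$ against a twist of $\pi$ (and symmetrically) contribute no pole. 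Finally, whenever $\deg \tau \neq \deg \tau'$ the factor is automatically entire at $s = 1$, removing the overwhelming majority of $(m,n)$ pairs from consideration.

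Running through the enumeration, the only surviving configurations are precisely those listed in the statement. For $m = n = 3$ with $\pi, \pi'$ non-tetrahedral, $A^3(\pi), A^3(\pi') \in \fF_4$ yield a single $\GL_4 \times \GL_4$ factor of order at most $1$. For $m = n = 4$ with both $\pi, \pi'$ not of solvable polyhedral type, $A^4(\pi), A^4(\pi') \in \fF_5$ yield a single $\GL_5 \times \GL_5$ factor of order at most $1$. For $m = n = 4$ with both octahedral, the $\GL_3 \times \GL_3$ pairing $L(s,(A^2(\pi)\otimes\eta_\pi) \times (A^2(\pi')\otimes\eta_{\pi'}\chi))$ is killed by $A^2(\pi) \not\sim A^2(\pi')$, leaving only $L(s, \nu_\pi \times (\nu_{\pi'} \otimes \chi))$, of order at most $1$. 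For $m = n = 4$ with both tetrahedral, $A^4(\pi) = A^2(\pi) \boxplus \mu_\pi \boxplus \bar\mu_\pi$ and likewise for $\pi'$; the $A^2(\pi)$-vs-$A^2(\pi')$ factor vanishes as before, and only the four $\GL_1 \times \GL_1$ Hecke factors $L(s, \mu_\pi^{\pm 1} \mu_{\pi'}^{\pm 1} \chi)$ can each contribute a simple pole, bounding $\mathfrak{o}_{4,4}(\pi, \pi')$ by $4$.

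The main obstacle is the bookkeeping: one must track central characters carefully when forming contragredients via the identities $\widetilde{A^j(\pi)} = A^j(\widetilde{\pi})$ recorded after the definition of $A^j(\pi)$, and verify that a pole at $s = 1$ really does force an equality of unitary cuspidal constituents rather than merely a twist-equivalence. Once this framework is in place, the remaining case check is tedious but mechanical.
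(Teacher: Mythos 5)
Your proposal is correct and is exactly the argument the paper intends but omits: decompose $A^m(\pi)$ and $A^n(\pi')$ into cuspidal constituents via Lemma~\ref{symmetric power decomposition}, use the Rankin--Selberg pole criterion, kill the $\GL_2\times\GL_2$ and $\GL_3\times\GL_3$ coincidences via $\pi\not\sim\pi'$ and Theorem~\ref{thm:Langlands}(3), and run the remaining degree-matching casework. The surviving configurations and pole-order bounds you obtain agree with the lemma (your bound of $4$ in the tetrahedral case is not sharp, but the lemma only claims $\mathfrak{o}_{4,4}\in\{0,1,2,3,4\}$, so this is fine).
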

\begin{proof}
    The proof is a direct application of Lemma~\ref{symmetric power decomposition} and Theorem~\ref{thm:Langlands}(3).  The proof is long, tedious, and unenlightening for our purposes, so we omit the details.
\end{proof}

\begin{lemma}
\label{lem:exercise}
If $\pi_0\in\mathfrak{F}_3$ is self-dual, then there exists a non-dihedral $\pi\in\mathfrak{F}_2$ such that $\pi_0=A^2(\pi)\otimes\omega_{\pi_0}$, and $\omega_{\pi_0}^2=\mathbbm{1}$.  If $\pi'\in\mathfrak{F}_2$ also satisfies $\pi_0=A^2(\pi')\otimes\omega_{\pi_0}$, then $\pi\sim\pi'$.
\end{lemma}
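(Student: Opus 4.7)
The plan is to reduce the lemma to the Gelbart--Jacquet characterization of the image of the adjoint-square lift. The central-character claim is immediate: applying $\omega$ to both sides of $\pi_0=\tilde{\pi}_0$ gives $\omega_{\pi_0}=\omega_{\tilde{\pi}_0}=\bar{\omega}_{\pi_0}=\omega_{\pi_0}^{-1}$, so $\omega_{\pi_0}^2=\mathbbm{1}$. With this in hand I would introduce the auxiliary representation $\sigma_0:=\pi_0\otimes\omega_{\pi_0}\in\mathfrak{F}_3$ and check that it lies in the ``trivial-central-character'' slice of the self-dual locus. Using that $\pi_0$ has rank three, the central character transforms as $\omega_{\sigma_0}=\omega_{\pi_0}\cdot\omega_{\pi_0}^3=\omega_{\pi_0}^4=\mathbbm{1}$, and self-duality is preserved since $\tilde{\sigma}_0=\tilde{\pi}_0\otimes\omega_{\pi_0}^{-1}=\pi_0\otimes\omega_{\pi_0}=\sigma_0$.

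Next I would invoke the Gelbart--Jacquet theorem~\cite{GJ}, which says that the image of the adjoint-square lift $\pi\mapsto A^2(\pi)$ on non-dihedral elements of $\mathfrak{F}_2$ is exactly the set of self-dual cuspidal representations of $\mathrm{GL}_3(\mathbb{A}_F)$ with trivial central character. Applied to $\sigma_0$, this produces some $\pi\in\mathfrak{F}_2$ with $A^2(\pi)=\sigma_0$, and $\pi$ must be non-dihedral by Lemma~\ref{symmetric power decomposition}(2), since otherwise $A^2(\pi)$ would fail to be cuspidal. Unwinding, and using $\omega_{\pi_0}^2=\mathbbm{1}$, gives $\pi_0=\sigma_0\otimes\omega_{\pi_0}^{-1}=A^2(\pi)\otimes\omega_{\pi_0}$, as required.

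For uniqueness, suppose $\pi'\in\mathfrak{F}_2$ satisfies $\pi_0=A^2(\pi')\otimes\omega_{\pi_0}$. Then $A^2(\pi)=A^2(\pi')$, and comparing Satake parameters at any place $v\notin S_{\pi}^{\infty}\cup S_{\pi'}^{\infty}$ yields the equality of multisets
\[
\{\alpha_{1,\pi}(v)/\alpha_{2,\pi}(v),\,1,\,\alpha_{2,\pi}(v)/\alpha_{1,\pi}(v)\}=\{\alpha_{1,\pi'}(v)/\alpha_{2,\pi'}(v),\,1,\,\alpha_{2,\pi'}(v)/\alpha_{1,\pi'}(v)\}.
\]
This forces $\{\alpha_{1,\pi}(v),\alpha_{2,\pi}(v)\}$ and $\{\alpha_{1,\pi'}(v),\alpha_{2,\pi'}(v)\}$ to differ by a common nonzero scalar, so locally $\pi'_v\cong\pi_v\otimes c_v$ for an unramified character $c_v$. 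A standard class-field-theoretic gluing, combined with strong multiplicity one for $\mathrm{GL}_2$, then promotes the family $\{c_v\}$ to a global idele class character $\chi\in\mathfrak{F}_1$ with $\pi'=\pi\otimes\chi$, giving $\pi\sim\pi'$.

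The only substantive input is the Gelbart--Jacquet theorem, already invoked elsewhere in the paper; the remainder is bookkeeping with central characters and Satake parameters. The one delicate point is the uniqueness step, where one must upgrade the pointwise local twists (each determined only up to a local square root of $\omega_{\pi'}/\omega_{\pi}$) into a single global character; this is the expected place where some care is needed, but it follows a well-trodden template in the literature on the adjoint lift.
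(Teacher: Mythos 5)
The paper's own proof is a one-line citation: the statement is exactly \cite[Theorem A]{Ramakrishnan_exercise}, which contains both the existence and the uniqueness clauses. Your existence half is a reasonable sketch of how that theorem is proved: the computation $\omega_{\pi_0}^2=\mathbbm{1}$, the twist $\sigma_0=\pi_0\otimes\omega_{\pi_0}$, and the verification that $\sigma_0$ is self-dual with trivial central character are all correct, and non-dihedrality of the descended $\pi$ does follow from Lemma~\ref{symmetric power decomposition}(2). One caveat: the input you attribute to Gelbart--Jacquet is stated more strongly than what \cite{GJ} literally provides. Their descent criterion is phrased in terms of a pole of the (partial) symmetric square $L$-function, so to apply it you must first show that self-duality plus trivial central character forces such a pole, via $L(s,\sigma_0\times\sigma_0)=L(s,\sigma_0,\Sym^2)L(s,\sigma_0,\Lambda^2)$ and $\Lambda^2(\sigma_0)\cong\widetilde{\sigma}_0\otimes\omega_{\sigma_0}$, whose standard $L$-function is entire and nonvanishing at $s=1$. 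That missing bridge is precisely what the citation to \cite{Ramakrishnan_exercise} packages.

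The genuine gap is in your uniqueness step. From $A^2(\pi)=A^2(\pi')$ you correctly deduce that $\pi_v'\cong\pi_v\otimes c_v$ for an unramified character $c_v$ at almost every place, but the claim that ``a standard class-field-theoretic gluing'' promotes the family $\{c_v\}$ to a global $\chi\in\mathfrak{F}_1$ with $\pi'=\pi\otimes\chi$ is not a routine patching argument; it is the substantive theorem ``multiplicity one for $\mathrm{SL}(2)$.'' Everywhere-local twist-equivalence does \emph{not} in general imply global twist-equivalence --- this is exactly where multiplicity one for $\mathrm{SL}(n)$ fails (Blasius's examples), so no purely local-to-global character-gluing can work without further global input. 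For $\GL_2$ the implication $A^2(\pi)\cong A^2(\pi')\Rightarrow\pi\sim\pi'$ is a theorem of Ramakrishnan, proved globally: the pole of order $2$ at $s=1$ of $L(s,(\pi\boxtimes\widetilde{\pi}')\times(\widetilde{\pi}\boxtimes\pi'))=\zeta_F(s)L(s,A^2(\pi))L(s,A^2(\pi'))L(s,A^2(\pi)\times A^2(\pi'))$ forces $\pi\boxtimes\widetilde{\pi}'$ to be non-cuspidal, and the cuspidality criterion of Theorem~\ref{thm:Langlands}(1) (i.e.\ \cite{Ramakrishnan}) then yields $\pi'\cong\pi\otimes\chi$. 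So your uniqueness argument needs to be replaced either by this Rankin--Selberg argument or by a direct appeal to \cite[Theorem A]{Ramakrishnan_exercise}, as the paper does.
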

\begin{proof}
This is contained in~\cite[Theorem A]{Ramakrishnan_exercise}.
\end{proof}

We now elaborate on Proposition~\ref{prop:list}(6).

\begin{lemma}\cite[Theorems 1 and 2]{Luo}
\label{lem:Luo}
Let $\pi\in\mathfrak{F}_2$ and $\pi_0\in\mathfrak{F}_3$.  Let $\pi\boxtimes\pi_0$ be as in Theorem~\ref{thm:Langlands}.
\begin{enumerate}
	\item If $\pi\boxtimes\pi_0$ is cuspidal, then $L(s,\pi\times\pi_0)$ has no exceptional zero.
	\item If $\pi\boxtimes\pi_0$ is not cuspidal, then $L(s,\pi\times\pi_0)$ has at most one exceptional zero $\beta$ (necessarily simple).  If $\beta$ exists, then
	\begin{enumerate}
	\item $\pi$ is non-dihedral,
	\item there exists $\chi\in\mathfrak{F}_1$ such that $\pi_0=A^2(\pi)\otimes\chi$,
	\item $\beta$ is the sole (necessarily simple) exceptional zero of $L(s,A^3(\pi)\otimes\chi)$, and
	\item $\beta$ is a simple pole of $L(s,\pi,\Sym^5\otimes \bar{\omega}_{\pi}^2\chi)$.
	\end{enumerate}
\end{enumerate}
\end{lemma}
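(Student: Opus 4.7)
The plan is to split into cases according to whether $\Sigma:=\pi\boxtimes\pi_0\in\mathfrak{A}_6$ is cuspidal, with Lemma~\ref{lem:GHL} together with the non-negativity of Lemma~\ref{lem:HR} as the main workhorses.

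For Part (1), if $\Sigma$ is cuspidal and not self-dual, Proposition~\ref{prop:list}(1) immediately gives no exceptional zero for $L(s,\pi\times\pi_0)=L(s,\Sigma)$. If $\Sigma$ is cuspidal and self-dual, the plan is to form an isobaric sum such as $\Pi=\mathbbm{1}\boxplus\pi\boxplus\pi_0\boxplus\Sigma$ and expand $L(s,\Pi\times\tilde\Pi)$ via \eqref{eqn:isobaric_RS} into Rankin--Selberg factors. The pole order at $s=1$ is the number of self-paired cuspidal constituents in $\Pi$, while a putative exceptional zero $\beta$ of $L(s,\Sigma)=L(s,\tilde\Sigma)$ contributes with multiplicity from the $(\mathbbm{1},\tilde\Sigma)$, $(\Sigma,\mathbbm{1})$, $(\pi,\tilde\Sigma)$, and $(\pi_0,\tilde\Sigma)$ cross-terms. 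Tuning $\Pi$ so that the zero count at $\beta$ strictly exceeds the pole order at $1$ contradicts Lemma~\ref{lem:GHL}; this is the main technical point of Part (1).

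For Part (2), Theorem~\ref{thm:Langlands}(2) divides the non-cuspidal case into two scenarios. In the dihedral scenario, with $\pi=I_K^F(\xi)$ and $(\pi_0)_K$ Eisensteinian, automorphic induction gives $L(s,\pi\times\pi_0)=L(s,\xi\otimes(\pi_0)_K)$, and the Eisensteinian decomposition of $(\pi_0)_K$ expresses this as a product of standard $L$-functions of degree at most three over $K$, each with no exceptional zero by Proposition~\ref{prop:list}(1,2); this yields (2)(a). In the remaining scenario, $\pi_0=A^2(\pi)\otimes\chi$ for some $\chi\in\mathfrak{F}_1$, which is (2)(b). Lemma~\ref{lem:CG} gives
\[
L(s,\pi\times\pi_0)=L(s,A^3(\pi)\otimes\chi)\cdot L(s,\pi\otimes\chi),
\]
and Proposition~\ref{prop:list}(2) applied to $\pi\otimes\chi\in\mathfrak{F}_2$ forces any exceptional zero $\beta$ of the left-hand side to be a (necessarily simple) zero of $L(s,A^3(\pi)\otimes\chi)$, which is (2)(c).

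For (2)(d), the plan is to use a second Clebsch--Gordan identity, valid as an identity of Euler products,
\[
L(s,\pi\times A^4(\pi)\otimes\chi)=L(s,\pi,\Sym^5\otimes\overline{\omega}_{\pi}^2\chi)\cdot L(s,A^3(\pi)\otimes\chi).
\]
When $\pi$ is not of polyhedral type, $A^4(\pi)\in\mathfrak{F}_5$, so the left-hand side is an entire $\mathrm{GL}_2\times\mathrm{GL}_5$ Rankin--Selberg $L$-function (the two factors have distinct degrees, hence are not contragredient). A non-vanishing argument for the left-hand side at $\beta$, modelled on the proof of (2)(c) via the isobaric sum $\pi\boxplus(A^4(\pi)\otimes\chi)$ with Lemmas~\ref{lem:HR}~and~\ref{lem:GHL}, then shows that $\beta$ is a simple pole of $L(s,\pi,\Sym^5\otimes\overline{\omega}_{\pi}^2\chi)$ after dividing by the simple zero of $L(s,A^3(\pi)\otimes\chi)$ at $\beta$. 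The hardest step is this non-vanishing in (2)(d), alongside the self-dual subcase of Part (1); in both, the polyhedral sub-cases of Lemma~\ref{symmetric power decomposition} demand separate treatment, since $A^4(\pi)$ may fail to be cuspidal and the resulting Rankin--Selberg factorization introduces additional factors whose behavior at $s=\beta$ must be controlled via the classification in Lemma~\ref{symmetric power decomposition}.
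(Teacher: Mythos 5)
This lemma is not proved in the paper at all: it is quoted from Luo (the cited Theorems 1 and 2), so your attempt is a reproof of an external result rather than a reconstruction of an argument in the text. The routine parts of your plan are fine: the non-self-dual cuspidal case of (1) via Proposition~\ref{prop:list}(1); the use of Theorem~\ref{thm:Langlands}(2) to get $\pi_0=A^2(\pi)\otimes\chi$ in the non-cuspidal, non-dihedral case; and the Clebsch--Gordan factorization $L(s,\pi\times\pi_0)=L(s,A^3(\pi)\otimes\chi)\,L(s,\pi\otimes\chi)$ together with Proposition~\ref{prop:list}(2), which gives (2)(b) and the ``$\beta$ is a zero of $L(s,A^3(\pi)\otimes\chi)$'' half of (2)(c).

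The two hard statements, however, are not proved, and the constructions you sketch provably cannot prove them. For the self-dual cuspidal case of (1), take $\Pi=a\mathbbm{1}\boxplus b\pi\boxplus c\pi_0\boxplus d\Sigma$ with $\Sigma=\pi\boxtimes\pi_0$: the pole order of $L(s,\Pi\times\widetilde{\Pi})$ at $s=1$ is $a^2+b^2+c^2+d^2$, while the only factors known to vanish at $\beta$ are the $2ad$ copies of $L(s,\Sigma)$ and $L(s,\widetilde{\Sigma})$ from the $\mathbbm{1}$--$\Sigma$ cross terms; the factors $L(s,\pi\times\widetilde{\Sigma})$ (unramified-locally $L(s,\widetilde{\pi}_0)L(s,A^2(\pi)\times\widetilde{\pi}_0)$) and $L(s,\pi_0\times\widetilde{\Sigma})$ have no known zero at $\beta$. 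Since $2ad\le a^2+d^2$, no choice of multiplicities makes the zero count exceed the pole order, so Lemma~\ref{lem:GHL} yields no contradiction; the missing input (exploiting the Clebsch--Gordan structure of $\Sigma\times\widetilde{\Sigma}$, the automorphy of $A^2(\pi),A^3(\pi),A^4(\pi)$, and holomorphy of symmetric-square-type $L$-functions as in Corollary~\ref{cor:sym2}) is precisely the content of Luo's theorem, not a tuning detail. The same defect undermines (2)(d): with $\Pi=\pi\boxplus(A^4(\pi)\otimes\chi)$ the pole order is $2$ and at most two factors could vanish at $\beta$, so Lemma~\ref{lem:GHL} is silent; your construction never uses the hypothesis $L(\beta,A^3(\pi)\otimes\chi)=0$, and the needed non-vanishing $L(\beta,\pi\times(A^4(\pi)\otimes\chi))\neq 0$ is exactly the twist-equivalent configuration the paper flags as resistant to the naive positivity argument (Remark~\ref{rem:2}). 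Finally, your dihedral case of (2) misreads Theorem~\ref{thm:Langlands}(2b): the ``Eisensteinian'' condition there concerns base change to the cubic field attached to $\pi_0$, not $(\pi_0)_K$ for the quadratic field $K$ of $\pi$; in fact $(\pi_0)_K$ is automatically cuspidal (a $\GL_3$ cusp form has no quadratic self-twist), so the correct conclusion is simply that $L(s,\pi\times\pi_0)=L(s,(\pi_0)_K\otimes\xi)$ has no exceptional zero by Proposition~\ref{prop:list}(2); as written, your appeal to a decomposition into ``standard $L$-functions of degree at most three over $K$'' would founder on possible degree-one self-dual factors, for which Proposition~\ref{prop:list}(1,2) does not exclude Siegel zeros.
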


\section{Proofs of Theorems~\ref{symmetric fourth no Siegel zero}~and~\ref{thm:main3}}
\label{sec:proofs_Sym4+self-twist}

\begin{proof}[Proof of Theorem~\ref{symmetric fourth no Siegel zero}]

Let $(\pi,\chi)\in\mathfrak{F}_2\times\mathfrak{F}_1$.  We will study the zeros of $L(s,A^4(\pi)\otimes\chi)$.  To establish Theorem~\ref{symmetric fourth no Siegel zero}, we simply replace $\chi$ with $\chi\omega_{\pi}^2$.  We have three cases to consider.
\begin{enumerate}
    \item If $\pi$ is dihedral or tetrahedral, then by Lemma~\ref{symmetric power decomposition}(2,3a), $L(s,A^4(\pi) \otimes \chi)$ factors as a product of $L$-functions of automorphic representations of $\GL_m(\mathbb{A}_F)$ with $1 \leq m \leq 3$.  By Proposition~\ref{prop:list}(2), any exceptional zero of such a factor is a \textcolor{blue}{zero of a self-dual abelian factor}.
    \item If $\pi$ is octahedral by $\eta$, then Lemma~\ref{symmetric power decomposition}(3b) implies that there exists $\nu\in\mathfrak{F}_2$ such that $L(s,A^4(\pi) \otimes \chi)$ factors as $L(s,\nu \otimes \chi) \cdot L(s,A^2(\pi) \otimes \eta\chi)$, with $A^2(\pi)\in\mathfrak{F}_3$.  Therefore, by Proposition~\ref{prop:list}(2), each factor has \textcolor{red}{no exceptional zero}.
    \item If $\pi$ is not of solvable polyhedral type, then Lemma~\ref{symmetric power decomposition}(4) implies that $A^2(\pi)\in\mathfrak{F}_3$, $A^4(\pi)\in\mathfrak{F}_5$, and both are self-dual.
    \begin{enumerate}
    \item If $A^4(\pi)\otimes\chi\in\mathfrak{F}_5$ is not self-dual, then, by Proposition~\ref{prop:list}(1), $L(s,A^4(\pi)\otimes\chi)$ has \textcolor{red}{no exceptional zero}.
    \item If $A^4(\pi)\otimes\chi\in\mathfrak{F}_5$ is self-dual, then consider $\Pi = \mathbbm{1} \boxplus A^2(\pi) \boxplus (A^4(\pi) \otimes \chi)\in\mathfrak{A}_9$.  The $L$-function $L(s,\Pi\times\tilde{\Pi})$ factors as
    \[
    \zeta_F(s) L(s,A^2(\pi) \times A^2(\pi)) L(s,A^4(\pi) \times A^4(\pi)) L(s,A^4(\pi)\otimes\chi)^2 L(s,A^2(\pi) \times (A^4(\pi) \otimes \chi))^2 L(s,A^2(\pi))^2,
    \]
    which is holomorphic away from a pole of order $3$ at $s=1$.  By Corollary~\ref{cor:sym2}, any nontrivial zero $\rho$ of $L(s,A^4(\pi)\otimes\chi)$ with $\re(\rho)>56/65$ is a zero of $L(s,\Pi\times\tilde{\Pi})$ of multiplicity at least $4$.   Since $-(L'/L)(s,\Pi\times\tilde{\Pi})$ has nonnegative Dirichlet coefficients by Lemma~\ref{lem:HR}, the existence of an exceptional zero of $L(s,A^4(\pi)\otimes\chi)$ would contradict Lemma~\ref{lem:GHL} applied to $L(s,\Pi\times\tilde{\Pi})$.  Therefore, there is \textcolor{red}{no exceptional zero}.\qedhere
    \end{enumerate}
\end{enumerate}

\begin{proof}[Proof of Theorem~\ref{thm:main3}]
Let $(\pi,\pi')\in\mathfrak{F}_n\times\mathfrak{F}_{n'}$.  Suppose that $\psi\in\mathfrak{F}_1$ satisfies $\pi\otimes\psi=\pi$ and $\pi'\otimes\psi\neq \pi'$.  If $\Pi = \widetilde{\pi} \boxplus \pi' \boxplus (\pi' \otimes \psi)\in\mathfrak{A}_{n+2n'}$, then our hypotheses imply that
\[
L(s,\Pi\times\tilde{\Pi}) = L(s,\pi\times\pi')^2 \cdot L(s,\tilde{\pi}\times\tilde{\pi}')^2  \cdot L(s,\pi\times\tilde{\pi})\cdot L(s,\pi'\times\tilde{\pi}')^2 \cdot L(s,\pi'\times(\tilde{\pi}'\otimes\psi)) \cdot L(s,\pi'\times(\tilde{\pi}'\otimes\bar{\psi})).
\]
Since $\pi' \neq \pi' \otimes \psi$, the product $L(s,\pi'\times(\tilde{\pi}'\otimes\psi))\cdot L(s,\pi'\times(\tilde{\pi}'\otimes\bar{\psi}))$ is entire.  Therefore, $L(s,\Pi\times\tilde{\Pi})$ is holomorphic away from a pole of order $3$ at $s=1$.  By Lemma~\ref{lem:HR}, $-(L'/L)(s,\Pi\times\tilde{\Pi})$ has nonnegative Dirichlet coefficients.  Since any real zero of $L(s,\pi\times\pi')$ is also a real zero of $L(s,\tilde{\pi}\times\tilde{\pi}')$ by \eqref{eqn:dual}, the existence of an exceptional zero of $L(s,\pi\times\pi')$ would contradict Lemma~\ref{lem:GHL} applied to $L(s,\Pi\times\tilde{\Pi})$.  Therefore, $L(s,\pi\times\pi')$ has \textcolor{red}{no exceptional zero}.
    \end{proof}
	
\end{proof}

\section{Proof strategy for Theorem~\ref{thm:main}}
\label{sec:strategy}

The following proposition is the only known strategy for eliminating exceptional zeros.

\begin{proposition}
\label{prop:strategy}
Let $(\pi_1,\pi_2)\in\mathfrak{F}_{n_1}\times\mathfrak{F}_{n_2}$.  If there exist a product $D(s)$ of Rankin--Selberg $L$-functions, integers $\ell_1,\ell_2\geq 0$ and $k\geq 1$ satisfying $\ell_1+\ell_2>k$, and a fixed $t\in(0,1)$ such that
\begin{enumerate}
	\item at each unramified $v\nmid\infty$ and $\ell\geq 1$, we have $\mathrm{Re}(a_D(v^{\ell}))\geq 0$ (see \eqref{eqn:aDdef}),
	\item $D(s)$ is holomorphic everywhere except for a pole of order $k$ at $s=1$,
	\item $D(s) L(s,\pi_1\times\pi_2)^{-\ell_1}L(s,\widetilde{\pi}_1\times\widetilde{\pi}_2)^{-\ell_2}$ is holomorphic at each real $s\in(t,1)$,
    \item the degree of the Euler product defining $D(s)$ for $\re(s)>1$ is $d\geq 1$, and
    \item the logarithm of the analytic conductor $\mathfrak{C}_D$ of $D(s)$ is $O_{n_1,n_2}(\log(\mathfrak{C}_{\pi_1}\mathfrak{C}_{\pi_2}))$,
\end{enumerate}
then there exists an effectively computable constant $\Cl[abcon]{ZFR_GHL}=\Cr{ZFR_GHL}(d)>0$ such that $L(\sigma,\pi_1\times\pi_2)$ has no real zero in the interval $\sigma\geq 1-\Cr{ZFR_GHL}/\log(\mathfrak{C}_{\pi_1}\mathfrak{C}_{\pi_2})$.
\end{proposition}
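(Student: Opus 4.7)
The plan is to apply Lemma~\ref{lem:GHL} to $D(s)$ and then to use the multiplicity slack $\ell_1+\ell_2>k$ to derive a contradiction from any hypothetical exceptional zero. This is the standard Hoffstein--Lockhart-style mechanism underlying every instance of Propositions~\ref{prop:list} and~\ref{prop:Thorner}: a real zero $\beta$ of $L(s,\pi_1\times\pi_2)$ near $s=1$ gets amplified to a zero of $D(s)$ of order exceeding the order of its pole, and Lemma~\ref{lem:GHL} rules this out.

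First, I would invoke Lemma~\ref{lem:GHL} applied to $D(s)$: hypotheses (1), (2), and (4) supply the nonnegativity of $\mathrm{Re}(a_D(v^{\ell}))$, the required meromorphic structure with a pole of order $k$ at $s=1$, and the total Euler-product degree $d$, respectively. This yields a constant $c_1=c_1(d)>0$ such that $D(\sigma)$ has at most $k$ real zeros, counted with multiplicity, in $[1-c_1/\log\mathfrak{C}_D,1)$. Invoking hypothesis (5), I replace $\log\mathfrak{C}_D$ by $\log(\mathfrak{C}_{\pi_1}\mathfrak{C}_{\pi_2})$ at the cost of an absolute multiplicative factor depending on $n_1,n_2$, absorbing everything into a new constant $c_2=c_2(d)$.

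Next, suppose for contradiction that $\beta$ is a real zero of $L(s,\pi_1\times\pi_2)$ lying in $[1-c_2/\log(\mathfrak{C}_{\pi_1}\mathfrak{C}_{\pi_2}),1)$. After shrinking $c_2$ further using the universal lower bound $\log(\mathfrak{C}_{\pi_1}\mathfrak{C}_{\pi_2})\gg_{F,n_1,n_2}1$ coming from~\eqref{eqn:AC_def}, I may assume $\beta\in(t,1)$. By~\eqref{eqn:dual}, $\beta$ is also a real zero of $L(s,\tilde{\pi}_1\times\tilde{\pi}_2)$ of the same multiplicity, so $L(s,\pi_1\times\pi_2)^{\ell_1}L(s,\tilde{\pi}_1\times\tilde{\pi}_2)^{\ell_2}$ vanishes at $s=\beta$ to order at least $\ell_1+\ell_2$. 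Hypothesis (3) then guarantees that the quotient $D(s)L(s,\pi_1\times\pi_2)^{-\ell_1}L(s,\tilde{\pi}_1\times\tilde{\pi}_2)^{-\ell_2}$ is holomorphic at $s=\beta$, forcing $D(s)$ itself to vanish at $\beta$ to order at least $\ell_1+\ell_2>k$. This single zero already contributes more than $k$ to the multiplicity count in the interval, contradicting the conclusion of Lemma~\ref{lem:GHL}.

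There is no substantive obstacle; the entire argument is bookkeeping. The only mildly technical point is verifying that the constant produced by Lemma~\ref{lem:GHL} can be chosen to depend only on the total degree $d$ rather than on the individual ranks of the Rankin--Selberg factors composing $D(s)$. Tracing the proof of~\cite[Lemma 5.9]{IK}, the Ramanujan-type inputs~\eqref{eqn:Ramanujan1}--\eqref{eqn:Ramanujan2} and the logarithmic-derivative bounds enter only through $d$, so this dependence is automatic, and the resulting $c_2(d)$ is the desired constant $c(d)>0$.
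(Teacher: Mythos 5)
Your proposal is correct and follows the paper's own argument: the paper likewise combines \eqref{eqn:dual} with hypothesis (3) to force any real zero of $L(s,\pi_1\times\pi_2)$ near $s=1$ to be a zero of $D(s)$ of multiplicity at least $\ell_1+\ell_2>k$, contradicting Lemma~\ref{lem:GHL}. Your extra bookkeeping (passing from $\mathfrak{C}_D$ to $\mathfrak{C}_{\pi_1}\mathfrak{C}_{\pi_2}$ via hypothesis (5) and ensuring $\beta\in(t,1)$) is just a fuller write-up of the same proof.
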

\begin{proof}
By \eqref{eqn:dual}, a real zero of $L(s,\pi_1\times\pi_2)$ is a zero of $D(s)$ with multiplicity at least $\ell_1+\ell_2>k$, so the existence of an exceptional zero of $L(s,\pi_1\times\pi_2)$ contradicts Lemma~\ref{lem:GHL} applied to $D(s)$.
\end{proof}

\subsection{Earlier work}
\label{subsec:earlier_work}

First, we consider the case where $\pi_1\in\cup_{n=2}^{\infty}\mathfrak{F}_{n}$ and $\pi_2=\mathbbm{1}$.  By Proposition~\ref{prop:list}(1), it suffices to assume that $\pi_1=\tilde{\pi}_1$.  Hoffstein and Ramakrishnan~\cite[Proof~of~Theorem~B]{HoffsteinRamakrishnan} showed that sufficient progress towards the modularity of Rankin--Selberg $L$-functions suffices to prove that $L(s,\pi_1)$ has no exceptional zero.  Assume that $L(s,\pi_1\times\widetilde{\pi}_1)$ is modular, so that there exists a representation $\pi_1\boxtimes\widetilde{\pi}_1\in\mathfrak{A}_{n^2}$ satisfying $L(s,\pi_1\times\widetilde{\pi}_1)=L(s,\pi_1\boxtimes\widetilde{\pi}_1)$.  By~\cite[Lemma~4.4]{HoffsteinRamakrishnan}, $\pi_1\boxtimes\widetilde{\pi}_1$ has a cuspidal constituent $\tau\notin\{\mathbbm{1},\pi_1,\widetilde{\pi}_1\}$.  If $L(s,\pi_1\times\tau)$ is modular, then by~\cite[Proofs~of~Lemma~4.4~and~Claim~4.5]{HoffsteinLockhart}, $L(s,\pi_1\times\tau)/L(s,\pi_1)$ is entire.  Therefore, subject to the modularity of $L(s,\pi_1\times\widetilde{\pi}_1)$ and $L(s,\pi_1\times\tau)$, Hoffstein and Ramakrishnan prove that if $\Pi=\mathbbm{1}\boxplus\widetilde{\tau}\boxplus\pi_1$, then $D(s)=L(s,\Pi\times\widetilde{\Pi})$ satisfies the hypotheses of Proposition~\ref{prop:strategy} with $k=3$ and $\ell_1=\ell_2=2$.

Proposition~\ref{prop:list}(2,4,5,6) are proved by verifying the existence of $\pi_1\boxtimes\widetilde{\pi}_1$ and the existence of a nontrivial cuspidal constituent $\tau$ of $\pi_1\boxtimes\widetilde{\pi}_1$ such that there exists an effectively computable constant $\Cl[abcon]{HR}=\Cr{HR}(n,F)\in(0,1)$ satisfying the property that $L(\sigma,\pi_1\times\tau)/L(\sigma,\pi_1)$ is holomorphic at each $\sigma\in(\Cr{HR},1]$.  Proposition~\ref{prop:list}(1,3,7,8) is proved by constructing a $D(s)$ satisfying Proposition~\ref{prop:strategy} using self-duality or twist-equivalence hypotheses.

In~\cite{Thorner_Siegel}, Proposition~\ref{prop:Thorner} is proved in a manner equivalent to the description we now give.  Suppose that $F$ is totally real, $\pi,\pi'\in\mathfrak{F}_2$ are non-dihedral and regular algebraic (so that $\Sym^n(\pi),\Sym^n(\pi')\in\mathfrak{F}_{n+1}$ for all $n\geq 1$~\cite{NewtonThorne3}), and $\pi\not\sim\pi'$.  Consider
\begin{equation}
\label{eqn:JT_Sym}
\begin{aligned}
\mathcal{D}(s)&=L(s,\mathrm{Sym}^{m}(\pi)\times\mathrm{Sym}^m(\widetilde{\pi})) L(s,\mathrm{Sym}^{n}(\pi')\times\mathrm{Sym}^n(\widetilde{\pi}'))^2 L(s,\mathrm{Sym}^4(\pi)\otimes\overline{\omega}_{\pi}^{2})\\
&\cdot L(s,\mathrm{Sym}^{m}(\pi)\times(\mathrm{Sym}^{n}(\pi')\otimes\chi))^2 L(s,\mathrm{Sym}^m(\widetilde{\pi})\times(\mathrm{Sym}^n(\widetilde{\pi}')\otimes\overline{\chi}))^2 L(s,\mathrm{Ad}(\pi))^3\\
&\cdot L(s,\mathrm{Sym}^{m-2}(\pi)\times(\mathrm{Sym}^{n}(\pi')\otimes\chi\omega_{\pi})) L(s,\mathrm{Sym}^{m-2}(\widetilde{\pi})\times(\mathrm{Sym}^{n}(\widetilde{\pi}')\otimes\overline{\chi}\overline{\omega}_{\pi})) \\
&\cdot L(s,\mathrm{Sym}^{m+2}(\pi)\times(\mathrm{Sym}^{n}(\pi')\otimes\chi\overline{\omega}_{\pi})) L(s,\mathrm{Sym}^{m+2}(\widetilde{\pi})\times(\mathrm{Sym}^n(\widetilde{\pi}')\otimes\overline{\chi}\omega_{\pi}))\\
&\cdot \prod_{k=1}^n [L(s,\mathrm{Ad}(\pi)\times(\mathrm{Sym}^{2k}(\pi')\otimes\overline{\omega}_{\pi'}^{k}))^3 L(s,\mathrm{Sym}^4(\pi)\times(\mathrm{Sym}^{2k}(\pi')\otimes \overline{\omega}_{\pi}^{2}\overline{\omega}_{\pi'}^{k}))].
\end{aligned}
\end{equation}
For a place $v$ of $F$, we define
\[
\Pi_v = \Sym^m(\tilde{\pi}_v)\oplus A^2(\pi_v)\otimes\Sym^n(\pi_v')\otimes\chi_v\oplus \Sym^n(\pi_v')\otimes\chi_v.
\]
Note that $\bigotimes_v \Pi_v$ is not yet known to lie in $\mathfrak{A}_{m+4n+5}$ because $\bigotimes_v A^2(\pi_v)\otimes\Sym^n(\pi_v')\otimes\chi_v$ is not yet known to lie in $\mathfrak{A}_{3(n+1)}$ except when $n\in\{0,1\}$.  However, it follows from Lemma~\ref{lem:CG} that if $v\notin S_{\pi}^{\infty}\cup S_{\pi'}^{\infty}\cup S_{\chi}^{\infty}$, then the $v$-th Euler factor of $\mathcal{D}(s)$ is $L(s,\Pi_v\otimes\tilde{\Pi}_v)$, which satisfies
\[
-\frac{L'}{L}(s,\Pi_v\otimes\tilde{\Pi}_v)=\sum_{\ell=1}^{\infty}\frac{|\overline{a_{\Sym^m(\pi)}(v^{\ell})}+a_{A^2(\pi)}(v^{\ell})a_{\Sym^n(\pi')\otimes\chi}(v^{\ell})+a_{\Sym^n(\pi')\otimes\chi}(v^{\ell})|^2\log q_v}{q_v^{\ell s}}.
\]
It follows that for such $v$, the $v^{\ell}$-th Dirichlet coefficient of $-(\mathcal{D}'/\mathcal{D})(s)$ is nonnegative.

If $m=2$ and $n=0$, then $L(s,\mathrm{Sym}^{m}(\pi)\times(\mathrm{Sym}^n(\pi')\otimes\chi)) = L(s,\Sym^2(\pi)\otimes\chi)$, which has no exceptional zero by Proposition~\ref{prop:list}(2).  Otherwise, the poles of $\mathcal{D}(s)$ should only come from $L(s,\mathrm{Sym}^{m}(\pi)\times\mathrm{Sym}^m(\widetilde{\pi})) L(s,\mathrm{Sym}^{n}(\pi')\times\mathrm{Sym}^n(\widetilde{\pi}'))^2$.  The only factors at risk of introducing another pole are of the form $L(s,\Sym^j(\pi)\times(\Sym^j(\pi')\otimes\xi))$ with $\xi\in\mathfrak{F}_1$, so it remains to show that such $L$-functions are entire when $\pi\not\sim\pi'$.  Rankin--Selberg theory and progress towards modularity are currently insufficient to conclude this expectation on their own.  However, since $\pi$ and $\pi'$ are regular algebraic, if there exists $t_{\xi}\in\mathbb{R}$ such that $\xi|\cdot|^{it_{\xi}}$ corresponds with a ray class character over $F$, then one can use the $\ell$-adic representations associated to $\pi$ and $\pi'$ along with the Chebotarev density theorem to establish the absence of extra poles~\cite[Lemma~4.2]{Thorner_Siegel}.  Restricting to $\chi$ satisfying this hypothesis, and observing that
\[
L(s,\mathrm{Sym}^{m}(\pi)\times(\mathrm{Sym}^{n}(\pi')\otimes\chi))^2 L(s,\mathrm{Sym}^m(\widetilde{\pi})\times(\mathrm{Sym}^n(\widetilde{\pi}')\otimes\overline{\chi}))^2
\]
is a factor of $\mathcal{D}(s)$, we see that any real zero $\beta$ of $L(s,\mathrm{Sym}^{m}(\pi)\times(\mathrm{Sym}^{n}(\pi')\otimes\chi))$ is a zero of $\mathcal{D}(s)$ with order at least 4, thus eliminating the existence of exceptional zeros via Proposition~\ref{prop:strategy}.

\subsection{Our approach}
\label{subsec:OurApproach}

In contrast with~\cite{Thorner_Siegel}, we must contend with two significant limitations.  First, we can only use information from the $\GL_1$-twists of the first four symmetric powers from $\GL_2$ and the symmetric square lift of $\Sym^3$.  Second, our information on the possible poles of the $\GL_1$-twists of $L$-functions of the form $L(s,A^j(\pi)\times A^j(\pi'))$ is limited to Lemma~\ref{counting pole lemma}.  We must therefore construct auxiliary Dirichlet series that use the available information more efficiently than in previous works.  We do so as follows.  For simplicity of notation, we will focus our discussion in this section on Theorem~\ref{thm:main}(1a,1b,1c,2,3).

\begin{lemma}
\label{lem:D}
Let $\chi\in\mathfrak{F}_1$.  Let $\pi,\pi'\in\mathfrak{F}_2$ be non-dihedral and twist-inequivalent.  Given $j,k\in\{0,1,2\}$ and $r\in\{0,1\}$, let $c_{j,k,r}\in\mathbb{N}\cup\{0\}$.  Define $D(s)$ by
\[
\prod_{\substack{j,j'\in\{0,1,2\} \\ k,k'\in\{0,1,2\} \\ r,r'\in\{0,1\}}}\prod_{a=0}^{\min\{j,j'\}}\prod_{b=0}^{\min\{k,k'\}}L(s, A^{j+j'-2a}(\pi)\times (A^{k+k'-2b}(\pi')\otimes \bar{\omega}_{\pi}^{\mathbf{1}_{2\mid j}\mathbf{1}_{j'=1}}\bar{\omega}_{\pi'}^{\mathbf{1}_{2\mid k}\mathbf{1}_{k'=1}}\chi^{r-r'}))^{c_{j,k,r}c_{j',k',r'}}.
\]
The logarithm of the analytic conductor of $D(s)$ is $O(\log(\mathfrak{C}_{\pi}\mathfrak{C}_{\pi'}\mathfrak{C}_{\chi}))$.  Also, if
\[
\sum_{v\nmid\infty}\sum_{\ell=1}^{\infty}\frac{a_D(v^{\ell})\log q_v}{q_v^{\ell s}},\qquad \re(s)>1
\]
is the Dirichlet series expansion of $-(D'/D)(s)$ and $v\notin S_{\pi}^{\infty}\cup S_{\pi'}^{\infty}\cup S_{\chi}^{\infty}$, then $a_D(v^{\ell})\geq 0$.
\end{lemma}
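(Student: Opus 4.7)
The approach is to recognize $D(s)$ at each unramified place as the local Rankin--Selberg factor of a formal isobaric sum paired with its contragredient, whereupon nonnegativity of the logarithmic derivative's Dirichlet coefficients becomes automatic.

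The key construction is as follows. For each $v\notin S_{\pi}^{\infty}\cup S_{\pi'}^{\infty}\cup S_{\chi}^{\infty}$, I form the unramified local representation
\[
\Pi_v:=\bigoplus_{\substack{j,k\in\{0,1,2\}\\r\in\{0,1\}}}c_{j,k,r}\cdot\bigl(A^j(\pi_v)\otimes A^k(\pi_v')\otimes\chi_v^{r}\bigr)
\]
of $\mathrm{GL}_N(F_v)$ for appropriate $N$, together with its contragredient $\widetilde{\Pi}_v$.  The plan is to expand $\Pi_v\otimes\widetilde{\Pi}_v$ as a sum indexed by 6-tuples $(j,j',k,k',r,r')$ with coefficient $c_{j,k,r}c_{j',k',r'}$, and then apply Lemma~\ref{lem:CG} twice: once to the $\pi$-piece $A^j(\pi_v)\otimes A^{j'}(\widetilde{\pi}_v)$ (with trivial character) and once to the $\pi'$-piece $A^k(\pi_v')\otimes A^{k'}(\widetilde{\pi}_v')$ (absorbing $\chi_v^{r-r'}$).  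These Clebsch--Gordan decompositions introduce precisely the twist characters $\bar{\omega}_{\pi}^{\mathbf{1}_{2\mid j}\mathbf{1}_{j'=1}}$ and $\bar{\omega}_{\pi'}^{\mathbf{1}_{2\mid k}\mathbf{1}_{k'=1}}$ that appear in the lemma's statement; the first of these can then be moved from the $\pi$-side to the $\pi'$-side of the Rankin--Selberg local factor via the standard identity $L(s,\sigma_v\times(\tau_v\otimes\mu_v))=L(s,(\sigma_v\otimes\mu_v)\times\tau_v)$ for $\mu\in\mathfrak{F}_1$.  This identifies the $v$-th Euler factor of $D(s)$ with $L(s,\Pi_v\times\widetilde{\Pi}_v)$.

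Nonnegativity of $a_D(v^{\ell})$ at unramified $v$ is then immediate: the Satake parameters of $\Pi_v\otimes\widetilde{\Pi}_v$ are $\{\alpha_i(v)\overline{\alpha_j(v)}\}$, so
\[
a_D(v^{\ell})=\sum_{i,j}\bigl(\alpha_i(v)\overline{\alpha_j(v)}\bigr)^{\ell}=\Bigl|\sum_{j,k,r}c_{j,k,r}\,a_{A^j(\pi)}(v^{\ell})\,a_{A^k(\pi')}(v^{\ell})\,\chi_v(\varpi_v)^{r\ell}\Bigr|^2\geq 0.
\]
For the analytic conductor bound, \eqref{eqn:BH} controls the conductor of each factor $L(s,A^{j+j'-2a}(\pi)\times(A^{k+k'-2b}(\pi')\otimes\psi))$ by a fixed-exponent polynomial in $\mathfrak{C}_{A^{j+j'-2a}(\pi)}$, $\mathfrak{C}_{A^{k+k'-2b}(\pi')}$, and $\mathfrak{C}_{\psi}$, where $\psi$ is the displayed twist.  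By Lemma~\ref{lem:AC}, $\log\mathfrak{C}_{A^m(\pi)},\,\log\mathfrak{C}_{\omega_{\pi}}\ll\log\mathfrak{C}_{\pi}$ for $m\leq 4$, and analogously for $\pi'$; hence $\log\mathfrak{C}_{\psi}\ll\log(\mathfrak{C}_{\pi}\mathfrak{C}_{\pi'}\mathfrak{C}_{\chi})$.  Since the number of factors is bounded and the exponents $c_{j,k,r}c_{j',k',r'}$ are fixed, summing yields $\log\mathfrak{C}_D\ll\log(\mathfrak{C}_{\pi}\mathfrak{C}_{\pi'}\mathfrak{C}_{\chi})$.

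There is no substantive obstacle; the proof is essentially bookkeeping.  The only real care required is in tracking which of the tuples $(j,j')$ and $(k,k')$ in $\{0,1,2\}^2$ produce the extra $\bar{\omega}$-twist under Clebsch--Gordan, which is precisely what accounts for the specific indicator exponents $\mathbf{1}_{2\mid j}\mathbf{1}_{j'=1}$ and $\mathbf{1}_{2\mid k}\mathbf{1}_{k'=1}$ in the definition of $D(s)$.
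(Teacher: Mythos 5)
Your proposal is correct and follows essentially the same route as the paper: form $\Pi_v=\bigoplus_{j,k,r}c_{j,k,r}\,A^j(\pi_v)\otimes A^k(\pi_v')\otimes\chi_v^{r}$ at each unramified $v$, use the Clebsch--Gordan decompositions of Lemma~\ref{lem:CG} to identify the $v$-th Euler factor of $D(s)$ with $L(s,\Pi_v\otimes\widetilde{\Pi}_v)$, and read off nonnegativity from the resulting square $\bigl|\sum_{j,k,r}c_{j,k,r}a_{A^j(\pi)}(v^{\ell})a_{A^k(\pi')}(v^{\ell})a_{\chi}(v^{\ell})^{r}\bigr|^2$. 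Your conductor argument via \eqref{eqn:BH} and Lemma~\ref{lem:AC} is just a slightly more explicit version of the paper's one-line appeal to Lemma~\ref{lem:AC}.
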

\begin{proof}
The first result follows from Lemma~\ref{lem:AC}.  For a place $v\notin S_{\pi}^{\infty}\cup S_{\pi'}^{\infty}\cup S_{\chi}^{\infty}$, define
\begin{equation}
\label{eqn:D_def_v}
\Pi_v=\bigoplus_{j=0}^2 \bigoplus_{k=0}^2 \bigoplus_{r=0}^1 c_{j,k,r} A^j(\pi_v)\otimes A^k(\pi_v')\otimes \chi_v^{r}.
\end{equation}
On one hand, by Lemma~\ref{lem:CG}, $\Pi_v\otimes\tilde{\Pi}_v$ decomposes as
\begin{align*}
&\bigoplus_{\substack{j,j'\in\{0,1,2\} \\ k,k'\in\{0,1,2\} \\ r,r'\in\{0,1\}}} c_{j,k,r}c_{j',k',r'} (A^j(\pi_v)\otimes A^{j'}(\widetilde{\pi}_v)) \otimes(A^k(\pi_v')\otimes A^{k'}(\widetilde{\pi}_v'))\otimes \chi_v^{r-r'}\\
&=\bigoplus_{\substack{j,j'\in\{0,1,2\} \\ k,k'\in\{0,1,2\} \\ r,r'\in\{0,1\}}} c_{j,k,r}c_{j',k',r'} \bigoplus_{a=0}^{\min\{j,j'\}} \bigoplus_{b=0}^{\min\{k,k'\}} A^{j+j'-2a}(\pi_v)\otimes A^{k+k'-2b}(\pi_v')\otimes (\bar{\omega}_{\pi}^{\mathbf{1}_{2\mid j}\mathbf{1}_{j'=1}}\bar{\omega}_{\pi'}^{\mathbf{1}_{2\mid k}\mathbf{1}_{k'=1}}\chi^{r-r'})_v.
\end{align*}
This indicates that the $v$-th Euler factor of $D(s)$ is $L(s,\Pi_v\otimes\tilde{\Pi}_v)$.  On the other hand, one computes
\[
-\frac{L'}{L}(s,\Pi_v\otimes\tilde{\Pi}_v)=\sum_{\ell=1}^{\infty}\Big|\sum_{j=0}^2 \sum_{k=0}^2 \sum_{r=0}^1 c_{j,k,r}a_{A^j(\pi)}(v^{\ell})a_{A^k(\pi')}(v^{\ell}) a_{\chi}(v^{\ell})^{r}\Big|^2\frac{\log q_v}{q_v^{\ell s}}.
\]
The desired result follows.
\end{proof}

As of now, if $\Pi_v$ is given by \eqref{eqn:D_def_v}, then $\Pi=\bigotimes_v \Pi_v$ is only known to be a global isobaric automorphic representation when each coefficient $c_{j,k,r}$ satisfying $j+k=4$ equals zero.  Central to our proofs, Lemma~\ref{lem:D} shows that $D(s)$ is a product of Rankin--Selberg $L$-functions that satisfies the condition (1) in Proposition~\ref{prop:strategy}, even if there are coefficients $c_{j,k,r}\geq 1$ satisfying $j+k=4$.  Therefore, it remains to find suitable coefficients $c_{j,k,r}$ for which $D(s)$ satisfies the remaining hypotheses of Proposition~\ref{prop:strategy} for precluding the exceptional zeros of the $L$-functions in Theorem~\ref{thm:main}(1a,1b,1c,2,3).

We also address the cases where $\pi$ or $\pi'$ is dihedral, or $\pi\sim\pi'$, or $\Sym^2(\pi')$ is replaced with an arbitrary $\pi_0\in\mathfrak{F}_3$.  These lead to extensive casework that we handle using Proposition~\ref{prop:list}, Lemmata~\ref{symmetric power decomposition}~and~\ref{lem:CG}, and Theorem~\ref{thm:Langlands}.

Our approach to eliminating exceptional zeros for the triple product $L$-functions in Theorem~\ref{thm:main}(1d,1e,1f) is only a minor modification of the strategy presented above.  It is notationally cumbersome to give a unified treatment like we did for Theorem~\ref{thm:main}(1a,1b,1c,2,3).

\subsection{Proof organization}

Let $m,n,r\geq 1$ be integers, $\chi \in \fF_1$, $\pi,\pi' \in \fF_2$, and $\pi_0\in\mathfrak{A}_3$.  We say that $L(s,A^m(\pi)\times (A^n(\pi')\otimes\chi))$ is in the
\begin{enumerate}
	\item {\it twist-inequivalent general case} if $\pi \not\sim \pi'$, and $A^m(\pi)$ and $A^n(\pi')$ are both cuspidal;
	\item {\it twist-inequivalent reduced case} if $\pi \not\sim \pi'$, and $A^m(\pi)$ or $A^n(\pi')$ is non-cuspidal;
	\item {\it twist-equivalent case} if $\pi \sim \pi'$.
\end{enumerate}
We say that $L(s,A^m(\pi)\times A^n(\pi')\times(\pi_0\otimes\chi))$ is in the
\begin{enumerate}
	\item {\it twist-inequivalent general case} if $\pi \not\sim \pi'$, $A^m(\pi)\not\sim\pi_0$,  $A^n(\pi')\not\sim\pi_0$, and $A^m(\pi)$ and $A^n(\pi')$ and $\pi_0$ are cuspidal;
	\item {\it twist-inequivalent reduced case} if $\pi \not\sim \pi'$, $A^m(\pi)\not\sim\pi_0$,  $A^n(\pi')\not\sim\pi_0$, and $A^m(\pi)$ or $A^n(\pi')$ or $\pi_0$ is non-cuspidal;
	\item {\it twist-equivalent case} if $\pi \sim \pi'$, $A^m(\pi)\sim\pi_0$, or $A^n(\pi')\sim\pi_0$.
\end{enumerate}
Typically, we will first proceed with the assumption that each $L$-function under consideration is in the twist-inequivalent general case.  We will then handle the twist-inequivalent reduced cases, and finally the twist-equivalent cases.

\section{Proof of Theorem~\ref{thm:main}(1a)}
\label{sec:1a}

Let $\pi \in \fF_2$ and $\pi_0 \in \fF_3$.  In this section, we will prove the following result.

\begin{theorem}
\label{thm:GL3xGL3}
If $\pi \in \fF_2$ and $\pi_0 \in \fF_3$, then any exceptional zero of $L(s,A^2(\pi)\times\pi_0)$ is the zero of a self-dual abelian factor.  No such factor exists unless $\pi_0\sim A^2(\pi)$.
\end{theorem}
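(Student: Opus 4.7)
The plan is to split on the structure of $\pi$ and $\pi_0$, handling the twist-equivalent case by explicit factorization and the twist-inequivalent case by constructing an auxiliary product of Rankin--Selberg $L$-functions satisfying Proposition~\ref{prop:strategy}.

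\emph{Dihedral $\pi$.} Then $A^2(\pi)$ is non-cuspidal, so $\pi_0\sim A^2(\pi)$ is impossible for cuspidal $\pi_0$. By Lemma~\ref{symmetric power decomposition}(2), $A^2(\pi)$ decomposes as an isobaric sum of $\mathrm{GL}_1$ and $\mathrm{GL}_2$ pieces, making $L(s,A^2(\pi)\times\pi_0)$ a product of $\mathrm{GL}_1\times\mathrm{GL}_3$ and $\mathrm{GL}_2\times\mathrm{GL}_3$ Rankin--Selberg $L$-functions. Every $\mathrm{GL}_2$ summand is dihedral or further reducible, so Proposition~\ref{prop:list}(2) and Lemma~\ref{lem:Luo} rule out exceptional zeros in each factor.

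\emph{Non-dihedral $\pi$ with $\pi_0\sim A^2(\pi)$.} Writing $\pi_0=A^2(\pi)\otimes\chi$ and applying Clebsch--Gordan (Lemma~\ref{lem:CG}) to $A^2(\pi_v)\otimes A^2(\widetilde{\pi}_v)\otimes\chi_v$ at unramified places, together with the standard matching at ramified places, yields the global factorization
\[
L(s,A^2(\pi)\times\pi_0)=L(s,\chi)\cdot L(s,A^2(\pi)\otimes\chi)\cdot L(s,A^4(\pi)\otimes\chi).
\]
The middle factor lies in $\mathfrak{F}_3$ and has no exceptional zero by Proposition~\ref{prop:list}(2). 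By Proposition~\ref{prop:list}(1) and Theorem~\ref{symmetric fourth no Siegel zero}, any exceptional zero of the other two factors is a zero of a self-dual abelian factor, matching the theorem's conclusion.

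\emph{Non-dihedral $\pi$ with $\pi_0\not\sim A^2(\pi)$.} If $\pi_0$ is not self-dual, take $\Pi=A^2(\pi)\boxplus\pi_0\boxplus\widetilde{\pi}_0$; then $L(s,\Pi\times\widetilde{\Pi})$ has a pole of order $3$ at $s=1$ (one from $L(s,A^2(\pi)\times A^2(\pi))$ and two from $L(s,\pi_0\times\widetilde{\pi}_0)$), while $L(s,A^2(\pi)\times\pi_0)$ and its dual each appear with multiplicity $2$; any real zero thus has multiplicity at least $4>3$ in $L(s,\Pi\times\widetilde{\Pi})$, and Proposition~\ref{prop:strategy} applies. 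If $\pi_0$ is self-dual, Lemma~\ref{lem:exercise} produces a non-dihedral $\pi'\in\mathfrak{F}_2$ with $\pi_0=A^2(\pi')\otimes\eta_0$, $\eta_0:=\omega_{\pi_0}$ quadratic, and $\pi'\not\sim\pi$ by Theorem~\ref{thm:Langlands}(3). I then invoke Lemma~\ref{lem:D} with $\chi=\eta_0$ and coefficients $c_{2,0,0}=c_{0,2,1}=c_{2,2,1}=1$, which corresponds locally to
\[
\Pi_v=A^2(\pi_v)\oplus\bigl(A^2(\pi'_v)\otimes\eta_{0,v}\bigr)\oplus\bigl(A^2(\pi_v)\otimes A^2(\pi'_v)\otimes\eta_{0,v}\bigr).
\]
Expanding $\Pi_v\otimes\widetilde{\Pi}_v$ via Clebsch--Gordan shows that $D(s)$ contains $\zeta_F(s)$ with multiplicity $3$ (one copy of $\mathbbm{1}$ from each of the three diagonal tensor pairings) while the self-dual target $L(s,A^2(\pi)\times A^2(\pi')\otimes\eta_0)$ appears with multiplicity $4$, again giving the desired margin for Proposition~\ref{prop:strategy}.

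\emph{The main obstacle} lies in the self-dual subcase. Expanding $\Pi_v\otimes\widetilde{\Pi}_v$ also produces factors such as $L(s,A^4(\pi)\times A^4(\pi'))$, $L(s,A^4(\pi)\times A^2(\pi')\otimes\eta_0)$, and, when $\pi$ or $\pi'$ is tetrahedral or octahedral, factors involving the isobaric decompositions of $A^3(\pi)$ or $A^4(\pi)$ from Lemma~\ref{symmetric power decomposition}(3). By Lemma~\ref{counting pole lemma} these may acquire additional poles at $s=1$ that would erode the strict inequality $4>3$. Handling this requires an exhaustive subcase analysis: either using Theorem~\ref{thm:Langlands}, Lemma~\ref{symmetric power decomposition}, and the classification of solvable polyhedral types to rule out the coincidences (e.g.\ $A^4(\pi)\sim A^4(\pi')$) under $\pi_0\not\sim A^2(\pi)$, or augmenting the coefficient vector in Lemma~\ref{lem:D} so that the target multiplicity grows faster than the pole order. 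This casework comprises the bulk of the argument in Section~\ref{sec:1a}.
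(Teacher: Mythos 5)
Your dihedral and twist-equivalent cases, and your treatment of non-self-dual $\pi_0$ (via $\Pi=A^2(\pi)\boxplus\pi_0\boxplus\widetilde{\pi}_0$, a correct alternative to the paper's one-line appeal to Proposition~\ref{prop:list}(8)), all check out and follow the same architecture as Section~\ref{sec:1a}. The genuine gap is exactly where you stop: the twist-inequivalent case with $\pi_0$ self-dual, which is the heart of the theorem. With your coefficient choice $c_{2,0,0}=c_{0,2,1}=c_{2,2,1}=1$, the target $L(s,A^2(\pi)\times(A^2(\pi')\otimes\chi))$ indeed appears with multiplicity $4$, but the pole order of $D(s)$ is not $3$: the diagonal block coming from $c_{2,2,1}$ contains $L(s,A^4(\pi)\times A^4(\pi'))$, whose pole at $s=1$ can have order up to $4$ when $\pi,\pi'$ are both tetrahedral, and order $1$ when they are both octahedral or both not of solvable polyhedral type (Lemma~\ref{counting pole lemma}); so $k$ can be as large as $7$, or equal to $4$, and the strict inequality $\ell_1+\ell_2>k$ of Proposition~\ref{prop:strategy} fails precisely in these subcases. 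Your first proposed remedy---ruling out coincidences such as $A^4(\pi)\sim A^4(\pi')$ from the hypothesis $\pi_0\not\sim A^2(\pi)$---cannot work: $\pi\not\sim\pi'$ does not preclude poles of $L(s,A^4(\pi)\times A^4(\pi'))$ (this is the whole content of Lemma~\ref{counting pole lemma}), so the coincidence is not ruled out, only its effect must be outweighed.

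The missing idea, which is what the paper supplies, is the second of your two alternatives made precise: take $c_{2,0,0}=2$ instead of $1$. Then the target multiplicity becomes $8$ while the pole order is $6$ plus the order of the pole of $L(s,A^4(\pi)\times A^4(\pi'))$, hence at most $7$ whenever at least one of $\pi,\pi'$ is non-tetrahedral, and Proposition~\ref{prop:strategy} applies with $\ell_1=8$, $k\leq 7$. In the remaining subcase where both $\pi,\pi'$ are tetrahedral (so the pole order can reach $10$), one does not need new coefficients: Lemma~\ref{symmetric power decomposition}(3a) factors $L(s,A^4(\pi)\times(A^2(\pi')\otimes\chi))$ as the target times two $\GL_1$-twists of $L(s,A^2(\pi'))$, so the same $D(s)$ rewrites with the target to the $12$th power against a pole of order at most $10$. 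Without this (or an equivalent) quantitative resolution, your proof of the main case is a plan rather than a proof; everything else in your write-up is fine.
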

\begin{proof}[Proof of Theorem~\ref{thm:main}(1a)]
In Theorem~\ref{thm:GL3xGL3}, replace $\pi_0$ with $\pi_0\otimes\omega_{\pi}$.
\end{proof}

\subsection{The twist-inequivalent general case}
Assume $\pi$ is non-dihedral and $\pi_0 \not\sim A^2(\pi)$.  It follows from Lemma~\ref{symmetric power decomposition}(2) that $A^2(\pi)\in\mathfrak{F}_3$.

\subsubsection{$\pi_0$ not self-dual}

Since $A^2(\pi) \in \fF_3$ is self-dual, it follows from Proposition~\ref{prop:list}(8) that if $\pi_0$ is not self-dual, then $L(s,A^2(\pi) \times \pi_0)$ has \textcolor{red}{no exceptional zero}.

\subsubsection{$\pi_0$ self-dual}
\label{subsubsec:3x3_SD}

If $\pi_0$ is self-dual, then by Lemma~\ref{lem:exercise}, there exists $\chi \in \fF_1$ with $\chi^2=\mathbbm{1}$ and a non-dihedral $\pi' \in \fF_2$ such that $\pi_0 = A^2(\pi') \otimes \chi$.  We now define
\begin{equation}
\label{eqn:D_1+D_2}
\begin{aligned}
    D_1(s) &= \zeta_F(s)^6 \cdot L(s,A^4(\pi) \times A^4(\pi')), \\
    D_2(s) &= L(s,A^2(\pi))^7\cdot  L(s,A^4(\pi))^5\cdot L(s,A^2(\pi'))^2\cdot  L(s,A^4(\pi'))^2    \cdot  L(s,A^2(\pi') \otimes \chi)^4 \\
    &  \cdot L(s,A^4(\pi) \times (A^2(\pi') \otimes \chi))^4\cdot  L(s,A^2(\pi) \times A^4(\pi'))^3  \cdot  L(s,A^2(\pi) \times A^2(\pi'))^3  \cdot   \\
    & \cdot L(s,A^4(\pi) \times A^2(\pi')),\\
    D(s)&= L(s,A^2(\pi) \times (A^2(\pi') \otimes \chi))^8 \cdot D_1(s) \cdot D_2(s).
\end{aligned}
\end{equation}
The $D(s)$ in \eqref{eqn:D_1+D_2} matches $D(s)$ in Lemma~\ref{lem:D} with $c_{2,0,0} = 2$, $c_{0,2,1} = c_{2,2,1} = 1$, and the remaining $c_{j,k,r}$'s equal to zero.  Our assumption that $\pi_0 \not\sim A^2(\pi)$ implies that $\pi \not\sim \pi'$. Thus, by Lemma~\ref{counting pole lemma}, $D_2(s)$ is entire, and $D_1(s)$ has a pole at $s=1$ of order at least 6.  Regarding the order of this pole, we discuss the following two cases.

{\it Case \ref{subsubsec:3x3_SD}a:  At least one of $\pi$ and $\pi'$ is non-tetrahedral.}  In this case, Lemma~\ref{counting pole lemma} states that $L(s,A^4(\pi) \times A^4(\pi'))$ (resp. $D_1(s)$) has at most a simple pole (resp. a pole of order at most $7$) at $s=1$.  Therefore, $L(s,A^2(\pi) \times \pi_0) = L(s,A^2(\pi) \times A^2(\pi') \otimes \chi)$ has \textcolor{red}{no exceptional zero} by Proposition~\ref{prop:strategy} applied to $D(s)$ in \eqref{eqn:D_1+D_2}, $\ell_1 = 8$, $\ell_2=0$, and $k \leq 7$. 

{\it Case \ref{subsubsec:3x3_SD}b:  $\pi$ and $\pi'$ are tetrahedral.} By Lemma~\ref{counting pole lemma}(2), $L(s,A^4(\pi) \times A^4(\pi'))$ has a pole at $s=1$ of order at most 4. By Lemma~\ref{symmetric power decomposition}(3a), if $\pi$ is tetrahedral by $\mu$, then
\[
L(s,A^4(\pi) \times (A^2(\pi') \otimes \chi)) = L(s,A^2(\pi) \times (A^2(\pi') \otimes \chi)) \cdot L(s,A^2(\pi') \otimes \chi\mu) \cdot L(s,A^2(\pi') \otimes \chi\mu^2).
\]
Inserting this factorization into \eqref{eqn:D_1+D_2}, we conclude that if $D_3(s)$ equals
\begin{align*}
&L(s,A^2(\pi))^7\cdot  L(s,A^4(\pi))^5\cdot L(s,A^2(\pi'))^2\cdot  L(s,A^4(\pi'))^2    \cdot  L(s,A^2(\pi') \otimes \chi)^4 \cdot L(s,A^2(\pi') \otimes \chi\mu)^4\\
& \cdot L(s,A^2(\pi') \otimes \chi\mu^2)^4 \cdot  L(s,A^2(\pi) \times A^4(\pi'))^3  \cdot  L(s,A^2(\pi) \times A^2(\pi'))^3  \cdot L(s,A^4(\pi) \times A^2(\pi'))
\end{align*}
then $D(s)$ in \eqref{eqn:D_1+D_2} factors as
\begin{equation}
\label{eqn:GL3xGL3_2}
D(s) = L(s,A^2(\pi) \times A^2(\pi') \otimes \chi)^{12} \cdot D_1(s) \cdot D_3(s).
\end{equation}
By Lemma~\ref{counting pole lemma}, $D_3(s)$ is entire, while $D_1(s)$ has a pole at $s=1$ of order at most $10$. In this case, $L(s,A^2(\pi) \times \pi_0) = L(s,A^2(\pi) \times A^2(\pi') \otimes \chi)$ has \textcolor{red}{no exceptional zero} by Proposition~\ref{prop:strategy} applied to $D(s)$ in \eqref{eqn:GL3xGL3_2}, with $\ell_1 = 12$, $\ell_2 = 0$, and $k \leq 10$.

\subsection{The twist-inequivalent reduced case}
In this subsection, we continue to assume that $\pi_0 \not\sim A^2(\pi)$, but we now assume that $\pi$ is dihedral by $(\eta,\xi,K)$.  By Lemma~\ref{symmetric power decomposition}(2) we have 
$$L(s,A^2(\pi) \times \pi_0) = L(s,I_K^F(\xi{\xi'}^{-1}) \times \pi_0) \cdot L(s,\pi_0 \otimes \eta),$$
which has \textcolor{red}{no exceptional zero} by Proposition~\ref{prop:list}(2,6).

\subsection{The twist-equivalent case} 
In this subsection, we assume that there exists $\chi\in\fF_1$ such that $\pi_0 = A^2(\pi) \otimes \chi$.  By Lemma~\ref{lem:CG}, we have that
\[
    L(s,A^2(\pi) \times \pi_0) = L(s,A^2(\pi) \times (A^2(\pi) \otimes \chi)) = L(s,A^4(\pi) \otimes \chi) \cdot L(s,A^2(\pi) \otimes \chi) \cdot L(s,\chi).
\]
Thus, any exceptional zero of $L(s,A^2(\pi) \times \pi_0)$ is the \textcolor{blue}{zero of a self-dual abelian factor} by Proposition~\ref{prop:list}(1,2) and Theorem~\ref{symmetric fourth no Siegel zero}.

\section{Proof of Theorem~\ref{thm:main}(1b)}
\label{sec:1b}

Let $\pi,\pi' \in \fF_2$ with $\pi'$ non-dihedral and $\pi \not\sim \pi'$. Let $\chi \in \fF_1$.  In this section, we will prove the following result.

\begin{theorem}
\label{thm:GL4xGL3}
Let $\pi,\pi' \in \fF_2$ and $\chi\in\mathfrak{F}_1$.  If $\pi'$ is non-dihedral and $\pi\not\sim\pi'$, then $L(s,A^3(\pi)\times(A^2(\pi')\otimes\chi))$ has no exceptional zero.
\end{theorem}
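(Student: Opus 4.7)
The plan is to implement the framework of Section~\ref{subsec:OurApproach}: construct an auxiliary $D(s)$ via Lemma~\ref{lem:D} for the twist-inequivalent general case, invoke Proposition~\ref{prop:strategy}, and handle the reduced cases (dihedral or tetrahedral $\pi$) by factorization of $A^3(\pi)$. Note that the twist-equivalent case does not arise because the hypothesis $\pi\not\sim\pi'$ excludes it.

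For the twist-inequivalent general case (non-dihedral, non-tetrahedral $\pi$, with $\pi\not\sim\pi'$), I take $c_{1,0,1}=c_{1,2,1}=c_{2,0,0}=c_{2,2,0}=1$ in Lemma~\ref{lem:D}, with all other coefficients zero. Expanding $\Pi_v\otimes\widetilde{\Pi}_v$ by Lemma~\ref{lem:CG}, the target $L(s,A^3(\pi)\times(A^2(\pi')\otimes\chi))$ appears in $D(s)$ with exponent $\ell_1=3$: the contributing ordered pairs $(j,k,r)\times(j',k',r')$ are precisely the three members of $\{(1,0,1),(1,2,1)\}\times\{(2,0,0),(2,2,0)\}$ whose $(k,k',b)$ choice produces $A^2(\pi')$, and we are forced into $(j,j',a)=(1,2,0)$ because $(2,1,0)$ would produce $A^3(\pi)\otimes\overline{\omega}_\pi$ rather than $A^3(\pi)$. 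By the conjugation symmetry in Lemma~\ref{lem:D}, the dual $L(s,A^3(\pi)\times(A^2(\pi')\otimes\overline{\omega}_\pi\overline{\chi}))$ contributes $\ell_2=3$, so $\ell_1+\ell_2=6$. The diagonal pairs contribute $\zeta_F(s)^{\sum_\alpha c_\alpha^2}=\zeta_F(s)^4$ to $D(s)$, and the only other potentially singular factor is $L(s,A^4(\pi)\times A^4(\pi'))$ from the $(2,2,0)^2$ diagonal with $(a,b)=(0,0)$, whose pole order is at most $1$ by Lemma~\ref{counting pole lemma}(4); factors of the form $L(s,A^m(\pi)\times(A^n(\pi')\otimes\xi))$ with $(m,n)\in\{(2,2),(2,4),(4,2)\}$ are entire by Theorem~\ref{thm:Langlands}(3) and Lemma~\ref{counting pole lemma} using $\pi\not\sim\pi'$, while $(3,3)$-factors do not appear because the admissible $(k,k')$ values force $k+k'$ to be even. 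Thus the pole order of $D(s)$ at $s=1$ is at most $5<6$, and Proposition~\ref{prop:strategy} eliminates the exceptional zero.

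For the reduced cases, I use factorization. If $\pi$ is tetrahedral, Lemma~\ref{symmetric power decomposition}(3a) gives $A^3(\pi)=(\pi\otimes\mu)\boxplus(\pi\otimes\overline{\mu})$, and the target splits into two $\GL_2\times\GL_3$ Rankin--Selberg $L$-functions handled by Lemma~\ref{lem:Luo}; the exceptional-zero condition $A^2(\pi')\sim A^2(\pi\otimes\mu)=A^2(\pi)$ is forbidden by Theorem~\ref{thm:Langlands}(3) under the hypothesis $\pi\not\sim\pi'$. If $\pi$ is dihedral, Lemma~\ref{symmetric power decomposition}(2) yields an analogous decomposition reducing matters to $\GL_2\times\GL_3$ factors (Lemma~\ref{lem:Luo}) and possibly $\GL_1\times\GL_3$ factors (Proposition~\ref{prop:list}(2)); again $\pi'$ non-dihedral forbids any twist-equivalence between $A^2(\pi')$ and the $A^2$ of a summand of $A^3(\pi)$.

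The main obstacle is the coefficient balance in the general case: the $\zeta_F$-exponent scales as $\sum_\alpha c_\alpha^2$, which grows quadratically with the number of activated coefficients, while the target exponent $\ell_1+\ell_2$ scales bilinearly, so four active coefficients is essentially the minimum choice that pushes $\ell_1+\ell_2$ strictly above the pole order. The $\overline{\omega}_\pi$-bookkeeping in Lemma~\ref{lem:D} further privileges the pair $(j,j')=(1,2)$ over $(2,1)$ for producing an untwisted $A^3(\pi)$, forcing the simultaneous activation of $c_{1,\cdot,1}$ and $c_{2,\cdot,0}$ to supply contributions to both the target and its dual.
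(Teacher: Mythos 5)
Your proof is correct and follows the paper's general framework (Lemma~\ref{lem:D} plus Proposition~\ref{prop:strategy} for the general case, the same factorizations via Lemma~\ref{symmetric power decomposition}(2),(3a) for dihedral and tetrahedral $\pi$), but your treatment of the twist-inequivalent general case takes a genuinely different route. The paper first splits on whether $A^3(\pi)\otimes\chi$ is self-dual: if not, it quotes Proposition~\ref{prop:list}(8) directly; if so, it uses the asymmetric coefficient choice $c_{2,0,0}=1$, $c_{1,2,1}=2$, $c_{2,2,0}=1$, under which self-duality forces the dual-direction factors to coincide with the target, yielding the target to the eighth power against $\zeta_F(s)^6\,L(s,A^4(\pi)\times A^4(\pi'))$, i.e.\ $\ell_1=8$, $\ell_2=0$, $k\leq 7$. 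Your symmetric choice $c_{1,0,1}=c_{1,2,1}=c_{2,0,0}=c_{2,2,0}=1$ avoids the dichotomy altogether: I verified that the target and its dual $L(s,A^3(\pi)\times(A^2(\pi')\otimes\bar\omega_\pi\bar\chi))$ each occur with exponent $3$ (the three pairs you list, and their transposes), the diagonal contributes exactly $\zeta_F(s)^4$, and the only other possibly polar factor is the single copy of $L(s,A^4(\pi)\times A^4(\pi'))$, so $\ell_1+\ell_2=6>k\leq 5$ uniformly, whether or not $A^3(\pi)\otimes\chi$ is self-dual. What your version buys is a single argument with a smaller pole budget and no appeal to Proposition~\ref{prop:list}(8); what the paper's version buys is a larger margin ($8$ versus $\leq 7$) in the harder subcase and a one-line disposal of the easier one. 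Two small points of care: the pole bound on $L(s,A^4(\pi)\times A^4(\pi'))$ should cite Lemma~\ref{counting pole lemma}(3),(4) together with the default entire cases (your standing assumption only excludes tetrahedral $\pi$, so the relevant cases are both octahedral or both not of solvable polyhedral type, not just case (4)); and in the dihedral reduced case your explicit allowance for $\GL_1$ constituents of $A^3(\pi)$ (when the induction $I_K^F(\xi^2\xi'^{-1})$ is Eisensteinian) handled via Proposition~\ref{prop:list}(2) is in fact slightly more careful than the paper's phrasing, while Lemma~\ref{lem:Luo}(2a) serves the same purpose as the paper's appeal to Proposition~\ref{prop:list}(6) for the dihedral $\GL_2$ constituents.
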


\begin{remark}
\label{rem:GL4xGL3}
If $\pi\sim\pi'$, or $\pi\not\sim\pi'$ but $\pi'$ is dihedral, then $L(s,\mathrm{Sym}^3(\pi)\times(\Sym^2(\pi')\otimes\chi))$ will have a factor that is a $\GL_1$-twist of $L(s,A^3(\pi))$ or $L(s,\pi,\Sym^5)$.  Lemma~\ref{lem:Luo} is now germane.  Outside of the setting of Proposition~\ref{prop:Thorner}, it is not yet known whether $L(s,\pi,\Sym^5\otimes\psi)$ is holomorphic in a real interval of the form $[1-\Cl[abcon]{sym5}/\log(\mathfrak{C}_{\pi}\mathfrak{C}_{\psi}),\infty)$.
\end{remark}

\begin{proof}[Proof of Theorem~\ref{thm:main}(1b)]
In Theorem~\ref{thm:GL4xGL3}, replace $\chi$ with $\chi \omega_{\pi}\omega_{\pi'}$.
\end{proof}

\subsection{The twist-inequivalent general case}
Assume that $A^3(\pi)$ and $A^2(\pi')$ are cuspidal, hence $A^3(\pi)\otimes\chi$ is cuspidal.  We separately handle the cases when $A^3(\pi)\otimes\chi$ is self-dual or not.
\subsubsection{$A^3(\pi)\otimes\chi$ is not self-dual}
It follows from the cuspidality and self-duality of $A^2(\pi')$ that $L(s,A^3(\pi)\times(A^2(\pi')\otimes\chi))$ has {\color{red}no exceptional zero} by Proposition~\ref{prop:list}(8).
\subsubsection{$A^3(\pi)\otimes\chi$ is self-dual}  In this case, we define
\begin{equation}
\label{eqn:GL4xGL3}
\begin{aligned}
    D_6(s) &= \zeta_F(s)^6 \cdot L(s,A^4(\pi) \times A^4(\pi')), \\
    D_7(s) &= L(s,\pi\otimes\chi)^2 \cdot L(s,\tilde{\pi}\otimes\bar{\chi})^2 \cdot  L(s,A^2(\pi))^6 \cdot  L(s,A^3(\pi)\otimes\chi)^4 \cdot  L(s,A^4(\pi))^2 \\
    &\cdot  L(s,A^2(\pi'))^7 \cdot L(s,A^4(\pi'))^5 \cdot  L(s,\pi\times(A^2(\pi')\otimes\chi))^4\cdot L(s,\tilde{\pi}\times(A^2(\pi')\otimes\bar{\chi}))^4 \\
    &\cdot L(s,\pi\times(A^4(\pi')\otimes\chi))^2\cdot L(s,\tilde{\pi}\times(A^4(\pi')\otimes\bar{\chi}))^2 \cdot L(s,A^2(\pi)\times A^2(\pi'))^7 \\
    &\cdot L(s,A^2(\pi)\times A^4(\pi'))^5\cdot L(s,A^3(\pi)\times(A^4(\pi')\otimes\chi))^4 \cdot L(s,A^4(\pi)\times A^2(\pi'))^3,\\
    D(s) & = L(s,A^3(\pi) \times(A^2(\pi')\otimes\chi))^8 \cdot D_6(s) \cdot D_7(s).
\end{aligned}
\end{equation}
The $D(s)$ in \eqref{eqn:GL4xGL3} matches $D(s)$ from Lemma~\ref{lem:D} with $c_{2,0,0} = 1$,  $c_{1,2,1} = 2$, $c_{2,2,0} = 1$, and $c_{j,k,r}=0$ otherwise.  By Lemma~\ref{counting pole lemma}, $D_7(s)$ is entire, and $L(s,A^4(\pi) \times A^4(\pi'))$ (resp. $D_6(s)$) has at most a simple pole (resp. a pole of order at most $7$) at $s=1$.  Therefore, $L(s,A^3(\pi)\times(A^2(\pi')\otimes\chi))$ has {\color{red}no exceptional zero} by Proposition~\ref{prop:strategy} applied to $D(s)$ in \eqref{eqn:GL4xGL3}, with $\ell_1 = 8$, $\ell_2 = 0$ and $k \leq 7$.

\subsection{The twist-inequivalent reduced case}
We continue to assume that $\pi\not\sim\pi'$, but now we also assume that $A^3(\pi)$ is not cuspidal.  This introduces two remaining cases.
\subsubsection{$\pi$ is dihedral}
By Lemma~\ref{symmetric power decomposition}(2), $L(s,A^3(\pi) \times (A^2(\pi') \otimes \chi))$ factors as a product of $L$-functions of the form $L(s,\nu \times A^2(\pi') \otimes \chi)$, where $\nu \in \fF_2$ is dihedral. Applying   Proposition~\ref{prop:list}(6) to each factor, we conclude that $L(s,A^3(\pi) \times (A^2(\pi') \otimes \chi))$ has \textcolor{red}{no exceptional zero}.
\subsubsection{$\pi$ is tetrahedral by $\mu$}
By Lemma~\ref{symmetric power decomposition}(3a), $L(s,A^3(\pi) \times A^2(\pi') \otimes \chi)$ decomposes as 
$$L(s,A^3(\pi) \times (A^2(\pi') \otimes \chi)) = L(s,\pi \times (A^2(\pi') \otimes \chi\mu)) \cdot L(s,\pi \times (A^2(\pi') \otimes \chi\mu^2)).$$
Since $\pi \not\sim \pi'$, Theorem~\ref{thm:Langlands}(3) implies that $A^2(\pi) \not\sim A^2(\pi')$. Therefore, $L(s,A^3(\pi) \times A^2(\pi') \otimes \chi)$ has \textcolor{red}{no exceptional zero} by Proposition~\ref{prop:list}(6).

\section{Proof of Theorem~\ref{thm:main}(1c)}
\label{sec:1c}

Let $\pi,\pi'\in\mathfrak{F}_2$.  In this section, we will prove the following result.
\begin{theorem}
\label{thm:GL5xGL2}
Let $\pi,\pi' \in \fF_2$.  
\begin{enumerate}
	\item If $\pi\not\sim\pi'$, then any exceptional zero of $L(s,A^4(\pi)\times\pi')$ is a zero of a self-dual abelian factor.  No such factor exists when $\pi'$ is non-dihedral.
	\item If $\pi\sim\pi'$ and $\pi$ is of solvable polyhedral type, then any exceptional zero of $L(s,A^4(\pi)\times\pi')$ is a zero of a self-dual abelian factor
\end{enumerate}
\end{theorem}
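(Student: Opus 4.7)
The plan is to follow the case trichotomy (twist-inequivalent general, twist-inequivalent reduced, twist-equivalent) laid out in Section \ref{sec:strategy}, decomposing $A^4(\pi)$ via Lemma \ref{symmetric power decomposition} as far as possible and handling each Rankin--Selberg factor of the resulting product by the earlier results (Propositions \ref{prop:list} and \ref{prop:Thorner}, Theorems \ref{symmetric fourth no Siegel zero}, \ref{thm:main3}, and \ref{thm:GL3xGL3}). Only in the genuinely irreducible sub-case, where $A^4(\pi)\in\mathfrak{F}_5$, does one invoke the auxiliary Dirichlet-series construction of Lemma \ref{lem:D} together with Proposition \ref{prop:strategy}.

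For Part (1), if $\pi$ is of solvable polyhedral type then Lemma \ref{symmetric power decomposition}(2, 3) writes $A^4(\pi)$ as an isobaric sum of constituents of rank at most $3$, so $L(s,A^4(\pi)\times\pi')$ factors as a product of Rankin--Selberg $L$-functions of degrees at most $6$. Proposition \ref{prop:list}(2, 4, 6) and Theorem \ref{thm:GL3xGL3} eliminate the exceptional zeros of the degree $\geq 4$ factors; the only possible residual exceptional zeros are those of the degree-$2$ factors $L(s,\pi'\otimes\chi)$ coming from the $\GL_1$ constituents of $A^4(\pi)$, and such a zero is a zero of a self-dual abelian factor precisely when $\pi'\otimes\chi$ is self-dual, which cannot happen when $\pi'$ is non-dihedral. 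If instead $\pi$ is not of solvable polyhedral type, then $A^4(\pi)\in\mathfrak{F}_5$ is self-dual and cuspidal: when $\pi'\ne\widetilde{\pi}'$, Proposition \ref{prop:list}(8) applies directly, while for $\pi'=\widetilde{\pi}'$ non-dihedral one constructs $D(s)$ as in Lemma \ref{lem:D} with $\chi=\bar{\omega}_{\pi'}$ and coefficients $c_{2,2,0},c_{2,1,1}$ (augmented by lower-weight terms) so that the cross-terms with $(j,j',a)=(2,2,0)$ and $(k,k',b)=(2,1,1)$ produce $L(s,A^4(\pi)\times\pi')$ with multiplicity $\ell_1$ strictly larger than the pole order of $D(s)$ at $s=1$, which is bounded using Lemma \ref{counting pole lemma}. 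When $\pi'$ is dihedral, automorphic induction reduces $\pi'$ to an idele class character of a quadratic extension, and the resulting self-dual abelian factors are identified by inspection.

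Part (2) assumes $\pi'=\pi\otimes\psi$ for some $\psi\in\mathfrak{F}_1$ with $\pi$ of solvable polyhedral type. Decomposing $A^4(\pi)$ via Lemma \ref{symmetric power decomposition}(2, 3) yields a factorization of $L(s,A^4(\pi)\times(\pi\otimes\psi))$ into $\GL_m\times\GL_2$ Rankin--Selberg $L$-functions with $1\leq m\leq 3$; Proposition \ref{prop:list}(2, 4, 6), Lemma \ref{lem:Luo}, and Theorem \ref{thm:GL3xGL3} eliminate the exceptional zeros of the degree $\geq 4$ factors, leaving only the degree-$2$ factors coming from $\GL_1$ constituents of $A^4(\pi)$ (twisted by $\psi$) as candidates for self-dual abelian factors.

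The main obstacle is arranging the coefficients $c_{j,k,r}$ of Lemma \ref{lem:D} in the general sub-case so that the multiplicity of $L(s,A^4(\pi)\times\pi')$ in $D(s)$ strictly exceeds the pole order of $D(s)$ at $s=1$, uniformly over the polyhedral type of $\pi'$. The target factor arises only through quadratic pairings of $A^2(\pi)$-components, and those same pairings also produce $L(s,A^4(\pi)\times A^4(\pi'))$, whose pole order at $s=1$ can reach $4$ in the tetrahedral case by Lemma \ref{counting pole lemma}(2). Balancing these contributions --- possibly via an explicit case split on whether $\pi'$ is tetrahedral, octahedral, or not of solvable polyhedral type --- is the delicate accounting step of the argument.
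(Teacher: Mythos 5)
Your overall architecture (case split by twist-equivalence and by the polyhedral type of $\pi$, factor-by-factor appeal to Proposition~\ref{prop:list}, plus a Lemma~\ref{lem:D}-type auxiliary $D(s)$ in the cuspidal case) matches the paper, but the proposal has genuine gaps at exactly the points it leaves open. First, in the main case ($A^4(\pi)\in\mathfrak{F}_5$, $\pi'$ self-dual non-dihedral) you never complete the construction: you flag ``balancing these contributions'' against a possible order-$4$ pole of $L(s,A^4(\pi)\times A^4(\pi'))$ in the tetrahedral case as the delicate unresolved step. But under the case hypothesis $A^4(\pi)$ cuspidal, $\pi$ is \emph{not} of solvable polyhedral type, so cases (2) and (3) of Lemma~\ref{counting pole lemma} cannot occur and that factor has at most a simple pole regardless of the type of $\pi'$; no case split on $\pi'$ is needed. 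The paper exploits exactly this, taking $c_{2,1,0}=2$, $c_{2,0,0}=c_{2,2,0}=1$ (no $\chi$ needed, since $\pi'$ is self-dual), which gives $\ell_1=8$ against a pole of order at most $7$ from $\zeta_F(s)^6\cdot L(s,A^4(\pi)\times A^4(\pi'))$. As written, your argument for the central sub-case is incomplete. Relatedly, your treatment of $\pi'$ self-dual dihedral in this case (``automorphic induction \dots identified by inspection'') is not a proof; the paper instead observes that $\pi'=\pi'\otimes\eta$ while $A^4(\pi)\neq A^4(\pi)\otimes\eta$ (compare central characters) and applies Theorem~\ref{thm:main3} to get no exceptional zero at all.

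Second, your bookkeeping of where the self-dual abelian factors can live is wrong, and in Part (2) this becomes a real gap. The degree-$2$ factors $L(s,\pi'\otimes\chi)$ are standard $\GL_2$ $L$-functions, so by Proposition~\ref{prop:list}(2) they have \emph{no} exceptional zero (and they are not abelian factors in any case; self-duality of $\pi'\otimes\chi$ is also unrelated to dihedrality of $\pi'$). The self-dual abelian factors in the reduced cases actually arise from the $\GL_2\times\GL_2$ factors $L(s,\nu\times\pi')$ with $\nu$ dihedral, via Proposition~\ref{prop:list}(4), and they can exist only when $\pi'$ is dihedral --- this is what yields the clause ``no such factor exists when $\pi'$ is non-dihedral.'' More seriously, in Part (2) with $\pi$ octahedral by $\eta$ and $\pi'=\pi\otimes\psi$, the factorization contains the degree-$4$ standard factor $L(s,A^3(\pi)\otimes\eta\psi)$, and none of the tools you cite (Proposition~\ref{prop:list}(2,4,6), Lemma~\ref{lem:Luo}, Theorem~\ref{thm:GL3xGL3}) eliminates its exceptional zero: Proposition~\ref{prop:list}(6) fails because the relevant $\GL_3$ piece is twist-equivalent to $\Sym^2$ of the $\GL_2$ piece, and Lemma~\ref{lem:Luo}(2) only locates, rather than removes, the zero. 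The paper's point is that octahedrality gives the self-twist $A^3(\pi)=A^3(\pi)\otimes\eta$, so Proposition~\ref{prop:list}(3) applies and kills this zero; if it were not eliminated, the conclusion of Part (2) (any exceptional zero lies in a self-dual abelian factor) would fail, since a degree-$4$ cuspidal factor has no abelian constituents. You need to add this self-twist argument to close Part (2).
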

\begin{remark}
\label{rem:2}
If $\pi,\pi'\in\mathfrak{F}_2$, $\psi\in\mathfrak{F}_1$ satisfies $\pi'=\pi\otimes\psi$, and $\pi$ is not of solvable polyhedral type, then
\[
L(s,A^4(\pi)\times\pi')=L(s,\pi,\Sym^5\otimes\bar{\omega}_{\pi}^2\psi)L(s,A^3(\pi)\otimes\psi),\qquad \re(s)>1.
\]
This introduces the difficulties described in Remark \ref{rem:GL4xGL3}.
\end{remark}
\begin{proof}[Proof of Theorem~\ref{thm:main}(1c)]
In Theorem~\ref{thm:GL5xGL2}, replace $\pi'$ with $\pi'\otimes\omega_{\pi}^2$.
\end{proof}

\subsection{The twist-inequivalent general case}
Assume that $\pi \not\sim \pi'$ and $A^4(\pi)$ is cuspidal. 

\subsubsection{$\pi'$ is not self-dual}

Since $A^4(\pi)\in\mathfrak{F}_5$ is self-dual, it follows from Proposition~\ref{prop:list}(8) that if $\pi'$ is not self-dual, then $L(s,A^4(\pi)\times\pi')$ has {\color{red}no exceptional zero}.

\subsubsection{$\pi'$ is self-dual}
\label{subsubsec:5x2_self-dual}

We separately handle the cases where $\pi'$ is dihedral and not.

{\it Case \ref{subsubsec:5x2_self-dual}a: $\pi'$ is dihedral.}  In this case, there exists a nontrivial quadratic $\eta\in\mathfrak{F}_1$ such that $\pi'=\pi'\otimes\eta$.  Note that since $A^4(\pi)$ has trivial character, while $A^4(\pi)\otimes\eta$ has central character $\eta^5=\eta$.  It follows that $A^4(\pi) \neq A^4(\pi) \otimes \eta$. By Theorem~\ref{thm:main3}, we conclude that $L(s,A^4(\pi) \times \pi')$ has \textcolor{red}{no exceptional zero.} 

{\it Case \ref{subsubsec:5x2_self-dual}b: $\pi'$ is non-dihedral.}  Define
\begin{equation}
\label{eqn:GL5xGL2}
\begin{aligned}
    D_4(s) &= \zeta_F(s)^6 \cdot L(s,A^4(\pi) \times A^4(\pi')) \\
    D_5(s) &= L(s,A^2(\pi))^6 \cdot L(s,A^4(\pi))^6 \cdot L(s,\pi')^8 \cdot L(s,A^2(\pi'))^7 \cdot L(s,A^3(\pi'))^4 \cdot L(s,A^4(\pi'))\\
    &\cdot L(s,A^2(\pi)\times\pi')^8 L(s,A^2(\pi)\times A^2(\pi'))^7 L(s,A^2(\pi)\times A^3(\pi'))^4 L(s,A^4(\pi)\times A^2(\pi'))^7\\
    &\cdot L(s,A^2(\pi)\times A^4(\pi')) L(s,A^4(\pi)\times A^3(\pi'))^4,\\
    D(s)&=L(s,A^4(\pi) \times \pi')^8 \cdot D_4(s) \cdot D_5(s).
\end{aligned}
\end{equation}
Since $\pi'$ is assumed to be self-dual, $D(s)$ in \eqref{eqn:GL5xGL2}  matches $D(s)$ in Lemma~\ref{lem:D} with $c_{2,1,0}=2$, $c_{2,0,0}=c_{2,2,0}=1$, and the remaining $c_{j,k,r}$'s equal to zero.  By Lemma~\ref{counting pole lemma}, $D_5(s)$ is entire, while $L(s,A^4(\pi) \times A^4(\pi'))$ (resp. $D_4(s)$) has at most a simple pole (resp. a pole of order at most $7$) at $s=1$.  Therefore, $L(s,A^4(\pi) \times \pi')$ has \textcolor{red}{no exceptional zero} by Proposition~\ref{prop:strategy} applied to $D(s)$ in \eqref{eqn:GL5xGL2}, with $\ell_1=8$, $\ell_2=0$, and $k\leq 7$.

\subsection{The twist-inequivalent reduced case}
We continue to assume that $\pi \not\sim \pi'$, but we now assume that $A^4(\pi)$ is not cuspidal. This introduces three cases.
\subsubsection{$\pi$ is dihedral}
By Lemma~\ref{symmetric power decomposition}(2), $L(s,A^4(\pi) \times \pi')$ factors as a product of $L$-functions of $\GL_1$- or $\GL_2$-twists of $\pi'$. Therefore, by Proposition~\ref{prop:list}(2,4) any exceptional zero of $L(s,A^4(\pi) \times \pi')$ is the \textcolor{blue}{zero of a self-dual abelian $L$-factor}.  Furthermore, by Proposition~\ref{prop:list}(4), if $\pi'$ is non-dihedral, then each factor has \textcolor{red}{no exceptional zero}.
\subsubsection{$\pi$ is tetrahedral by $\mu$}
By Lemma~\ref{symmetric power decomposition}(3a), $L(s,A^4(\pi) \times \pi')$ factors as 
\[
L(s,A^4(\pi) \times \pi') = L(s,A^2(\pi) \times \pi') \cdot L(s,\pi' \otimes \mu) \cdot L(s,\pi' \otimes \mu^2).
\]
By Proposition~\ref{prop:list}(2), $L(s,\pi' \otimes \mu)$ and $L(s,\pi' \otimes \mu^2)$ have no exceptional zero.  By Theorem~\ref{thm:Langlands}(3), our assumption that $\pi \not\sim \pi'$ implies that $A^2(\pi) \not\sim A^2(\pi')$.  Thus, $L(s,A^2(\pi) \times \pi')$ has no exceptional zero by Proposition~\ref{prop:list}(6). We conclude that $L(s,A^4(\pi) \times \pi')$ has \textcolor{red}{no exceptional zero}.
\subsubsection{$\pi$ is octahedral by $\eta$}
By Lemma~\ref{symmetric power decomposition}(3b), there exists a dihedral $\nu\in\mathfrak{F}_2$ such that
\[
L(s,A^4(\pi) \times \pi')  = L(s,A^2(\pi) \times (\pi' \otimes \eta)) \cdot L(s,\nu \times \pi').
\]
By Proposition~\ref{prop:list}(6), $L(s,A^2(\pi) \times (\pi' \otimes \eta))$ has no exceptional zero.  By Proposition~\ref{prop:list}(4), any exceptional zero of $L(s,\nu \times \pi')$ is the zero of a self-dual abelian factor, and no such factor exists when $\pi'$ is non-dihedral.  Therefore, any exceptional zero of $L(s,A^4(\pi) \times \pi')$ is a \textcolor{blue}{zero of a self-dual abelian $L$-factor}, with \textcolor{red}{no exceptional zero} when $\pi'$ is non-dihedral.

\subsection{The twist-equivalent case} 
Suppose that there exists $\mu\in\fF_1$ such that $\pi' = \pi \otimes \mu$, in which case $L(s,A^4(\pi) \times \pi') = L(s,A^4(\pi) \times (\pi \otimes \mu))$.  This introduces three cases.
\subsubsection{$\pi$ is dihedral}
By Lemma~\ref{symmetric power decomposition}(2), $L(s,A^4(\pi) \times (\pi \otimes \mu))$ factors into a product of $\GL_m\times \GL_2$ Rankin--Selberg $L$-functions, where $1 \leq m \leq 2$.  By Proposition~\ref{prop:list}(2,4) applied to each factor, any exceptional zero of $L(s,A^4(\pi) \times \pi')$ is a  \textcolor{blue}{zero of a self-dual abelian factor}.
\subsubsection{$\pi$ is tetrahedral}
By Lemma~\ref{symmetric power decomposition}(3a) and Lemma~\ref{lem:CG}, $L(s,A^4(\pi) \times (\pi \otimes \mu))$ factors as a product of $\GL(2)$ $L$-functions.  Therefore, by Proposition~\ref{prop:list}(2), $L(s,A^4(\pi) \times \pi')$ has \textcolor{red}{no exceptional zero}.
\subsubsection{$\pi$ is octahedral by $\eta$}
By Lemma~\ref{symmetric power decomposition}(3b), there exists a dihedral $\nu\in\mathfrak{F}_2$ such that
\[
L(s,A^4(\pi) \times (\pi \otimes \mu)) = L(s,\pi \otimes \eta\mu) \cdot L(s,\nu \times (\pi \otimes \mu)) \cdot L(s,A^3(\pi) \otimes \eta\mu).
\]
The factor $L(s,\pi\otimes\eta\mu)$ has no exceptional zero by Proposition~\ref{prop:list}(2).  Since $\nu$ is dihedral and $\pi$ is not, $L(s,\nu\times(\pi\otimes\mu))$ has no exceptional zero by Proposition~\ref{prop:list}(4).  Since $\pi$ is octahedral by $\eta$, it follows from Lemma~\ref{symmetric power decomposition}(3b) that $A^3(\pi)\otimes\eta\mu=A^3(\pi)\otimes\mu$.  Therefore, $L(s,A^3(\pi) \otimes \eta\mu)$ has no exceptional zero by Proposition~\ref{prop:list}(3).  In summary, $L(s,A^4(\pi) \times \pi')$ has {\color{red}no exceptional zero}.

\section{Proof of Theorem~\ref{thm:main}(1d)}
\label{sec:1d}

Let $\pi,\pi',\pi'' \in \fF_2$, and assume that $\pi\not\sim\pi''$.  In this section, we will prove the following result.

\begin{theorem}
\label{thm:GL2xGL2xGL2}
If $\pi,\pi',\pi''\in\mathfrak{F}_2$ and $\pi\not\sim\pi''$, then any exceptional zero of $L(s,\pi\times\pi'\times\pi'')$ is a zero of a self-dual abelian factor.  No such factor exists when $\pi,\pi',\pi''$ are all non-dihedral.
\end{theorem}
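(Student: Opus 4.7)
The plan is to emulate the case-splitting strategy of Sections~\ref{sec:1a}--\ref{sec:1c}, separating the twist-inequivalent general case (where $\pi,\pi',\pi''$ are all non-dihedral and pairwise twist-inequivalent) from the reduced cases (one of $\pi,\pi',\pi''$ is dihedral, or $\pi\sim\pi'$, or $\pi'\sim\pi''$).

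I would first dispose of the reduced cases. When $\pi\sim\pi'$, Theorem~\ref{thm:Langlands} yields $\pi\boxtimes\pi'=(\Sym^2(\pi)\otimes\mu)\boxplus\omega_{\pi}\mu$ for an appropriate $\mu\in\mathfrak{F}_1$, so
\[
L(s,\pi\times\pi'\times\pi'')=L(s,(\Sym^2(\pi)\otimes\mu)\times\pi'')\cdot L(s,\pi''\otimes\omega_{\pi}\mu),
\]
and Proposition~\ref{prop:list}(2,6), together with the hypothesis $\pi\not\sim\pi''$ (which forces $A^2(\pi)\not\sim A^2(\pi'')$ by Theorem~\ref{thm:Langlands}(3)), shows that both factors are exceptional-zero-free. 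The case $\pi'\sim\pi''$ is symmetric. When one of $\pi,\pi',\pi''$ is dihedral, say $\pi=I_K^F(\xi)$, automorphic induction and the projection formula reduce $L(s,\pi\times\pi'\times\pi'')$ to the $\GL_2\times\GL_2$ Rankin--Selberg $L$-function $L_K(s,(\pi'_K\otimes\xi)\times\pi''_K)$ over the quadratic extension $K$, which is handled factor-by-factor by Proposition~\ref{prop:list}(4,6,8) together with Lemma~\ref{symmetric power decomposition}; the only surviving exceptional zeros are pinned to self-dual abelian factors coming from the dihedral induction, and these disappear when none of $\pi,\pi',\pi''$ is dihedral.

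In the twist-inequivalent general case, Theorem~\ref{thm:Langlands}(1) gives $\pi'\boxtimes\pi''\in\mathfrak{F}_4$ and rewrites the triple product as the $\GL_2\times\GL_4$ Rankin--Selberg $L$-function $L(s,\pi\times(\pi'\boxtimes\pi''))$. If exactly one of $\pi$ and $\pi'\boxtimes\pi''$ is self-dual, Proposition~\ref{prop:list}(8) directly eliminates any exceptional zero. In the remaining configurations I build an auxiliary Dirichlet series $D(s)$ by generalizing Lemma~\ref{lem:D} to three base representations, setting
\[
\Pi_v=\bigoplus c_{j,k,\ell,r}\,A^j(\pi_v)\otimes A^k(\pi_v')\otimes A^\ell(\pi_v'')\otimes\chi_v^{r},
\]
where the sum is over non-negative integers $j,k,\ell$ with $j+k+\ell\leq 2$ and $r\in\{0,1\}$, and the $c_{j,k,\ell,r}\in\mathbb{Z}_{\geq 0}$ are to be chosen. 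The support restriction $j+k+\ell\leq 2$ guarantees, via Lemma~\ref{lem:CG}, that every summand of $\Pi_v\otimes\widetilde{\Pi}_v$ has at most two of its three symmetric-power indices at least $2$, keeping every Euler factor of $D(s)=L(s,\Pi\times\widetilde{\Pi})$ inside the reach of Jacquet--Piatetski-Shapiro--Shalika theory and of the functorial lifts packaged in Theorem~\ref{thm:Langlands}. Non-negativity of the Dirichlet coefficients of $-(D'/D)(s)$ is automatic from the square structure, the analytic conductor bound comes from Lemma~\ref{lem:AC}, and the pole order at $s=1$ is computed by combining Lemma~\ref{counting pole lemma} with Theorem~\ref{thm:Langlands}.

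The main obstacle will be the combinatorial problem of selecting the coefficients $c_{j,k,\ell,r}$ so that the combined multiplicity of $L(s,\pi\times\pi'\times\pi'')$ and its contragredient in $D(s)$ strictly exceeds the pole order of $D(s)$ at $s=1$. A fully symmetric choice with $r=0$ yields only equality, via an AM--GM argument balancing the linear contributions to the multiplicity against the quadratic contributions to the pole; the symmetry must be broken, either by doubling the coefficient of a single constituent, by inserting a non-trivial twist indexed by $r$ in the spirit of Theorem~\ref{thm:main}(2,3), or---in the most resistant all-self-dual subcase---by absorbing a symmetric-square factor into an entire one using Theorem~\ref{symmetric fourth no Siegel zero} and Corollary~\ref{cor:sym2}. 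Once the coefficients are pinned down, all hypotheses of Proposition~\ref{prop:strategy} are met and the desired zero-free region with no exceptional zero follows.
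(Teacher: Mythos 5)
Your treatment of the reduced cases is essentially the paper's: the twist-equivalent case factors through $L(s,A^2(\pi)\times(\pi''\otimes\omega_\pi\chi))\cdot L(s,\pi''\otimes\omega_\pi\chi)$ and is handled by Proposition~\ref{prop:list}(2,6) via Theorem~\ref{thm:Langlands}(3), and the dihedral case is handled by base change to $K$ and Proposition~\ref{prop:list}(2,4). The problem is the twist-inequivalent non-dihedral case, where your construction has a structural gap.

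With the support restriction $j+k+\ell\leq 2$, the target $L$-function can only arise in $\Pi\times\widetilde{\Pi}$ from pairing a two-factor constituent (a twist of $\pi\boxtimes\pi'$, $\pi\boxtimes\pi''$, or $\pi'\boxtimes\pi''$) against the complementary one-factor constituent (a matching twist of $\widetilde{\pi}''$, $\widetilde{\pi}'$, or $\widetilde{\pi}$): the parity constraint from Clebsch--Gordan forces $j+j'$, $k+k'$, $\ell+\ell'$ all odd, and with both index sums at most $2$ this leaves only the complementary $0/1$ pairings. Hence $\ell_1+\ell_2=2\sum_i a_ib_i$, where $a_i,b_i$ are the coefficients of such a matched pair, while the pole order at $s=1$ is at least $\sum_U c_U^2\geq\sum_i(a_i^2+b_i^2)$ from the diagonal terms (each constituent in the general case is cuspidal, and Rankin--Selberg $L$-functions do not vanish at $s=1$, so nothing cancels). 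The AM--GM inequality you yourself notice is therefore not a defect of the ``fully symmetric'' choice but of the ansatz: $\ell_1+\ell_2\leq k$ for \emph{every} choice of the $c_{j,k,\ell,r}$, so Proposition~\ref{prop:strategy} can never be invoked. None of your proposed remedies escapes this: doubling a coefficient or inserting a $\chi$-twist only relabels the $a_i,b_i$ in the same count, and Theorem~\ref{symmetric fourth no Siegel zero}/Corollary~\ref{cor:sym2} concern $\GL_1$-twists of $A^4(\pi)$ and do not produce extra copies of the degree-$8$ triple product.

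The missing idea is to allow a constituent with index sum $3$. The paper takes $\Pi=(\pi\boxtimes\pi')\boxplus\widetilde{\pi}''\boxplus(A^2(\pi)\boxtimes\widetilde{\pi}'')$, where $A^2(\pi)\boxtimes\widetilde{\pi}''\in\mathfrak{F}_6$ is cuspidal precisely by Theorem~\ref{thm:Langlands}(3) (this is where non-dihedrality and $\pi\not\sim\pi''$ enter, and it also forces $A^2(\pi)\not\sim A^2(\pi'')$ so no accidental poles appear in the cross terms). The cross term $(\pi\boxtimes\pi')\times(A^2(\widetilde{\pi})\boxtimes\pi'')$ splits, by the Clebsch--Gordan identity $\pi_v\otimes A^2(\widetilde{\pi}_v)=\pi_v\oplus A^3(\pi_v)$, into a \emph{second} copy of $L(s,\pi\times\pi'\times\pi'')$ times $L(s,A^3(\pi)\times(\pi'\boxtimes\pi''))$, and the latter is entire because $A^3(\pi)\in\mathfrak{A}_4$ and $\pi'\boxtimes\pi''\in\mathfrak{F}_4$ is cuspidal. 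Since the three constituents are cuspidal of distinct degrees, $L(s,\Pi\times\widetilde{\Pi})$ has a pole of order exactly $3$, while the target and its dual each appear with multiplicity $2$, giving $\ell_1+\ell_2=4>3=k$. Your worry that index sum $3$ leaves ``the reach of Jacquet--Piatetski-Shapiro--Shalika theory'' is unfounded: the resulting factors are Rankin--Selberg $L$-functions of pairs such as $A^3(\pi)\times(\pi'\boxtimes\pi'')$ and $A^4(\pi)\times A^2(\pi'')$, all covered by the known lifts in Lemma~\ref{symmetric power decomposition} and Theorem~\ref{thm:Langlands}. Without this (or an equivalent) enlargement of the ansatz, your general case does not close.
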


\begin{remark}
\begin{enumerate}[leftmargin=*]
	\item By the work of Ramakrishnan~\cite{Ramakrishnan}, we can view $L(s,\pi\times\pi'\times\pi'')$ as a $\GL_2\times\GL_4$ Rankin--Selberg $L$-function that might factor.
	\item If $\pi\sim\pi'\sim\pi''$, then there exists $\psi\in\mathfrak{F}_1$ such that $L(s,\pi\times\pi'\times\pi'')=L(s,\pi\otimes\psi)^2 L(s,A^3(\pi)\otimes\psi)$, in which case we run into the difficulties described in Remark \ref{rem:GL4xGL3}.
\end{enumerate}
\end{remark}

\begin{proof}[Proof of Theorem~\ref{thm:main}(1d)]
The necessary conditions are contained in Theorem~\ref{thm:GL2xGL2xGL2}.
\end{proof}

\subsection{The twist-inequivalent non-dihedral case}

Assume that $\pi,\pi',\pi''$ are non-dihedral and pairwise twist-inequivalent.  By Theorem~\ref{thm:Langlands}, there exist representations $\pi\boxtimes\pi',\pi'\boxtimes\pi''\in\mathfrak{F}_4$ and $A^2(\pi)\boxtimes\tilde{\pi}''\in\mathfrak{F}_6$ such that
\[
L(s,\pi\boxtimes\pi')=L(s,\pi\times\pi'),\quad L(s,\pi'\boxtimes\pi'')=L(s,\pi'\times\pi''), \quad L(s,A^2(\pi)\boxtimes\tilde{\pi}'')=L(s,A^2(\pi)\times\tilde{\pi}'').
\]
Define the isobaric sum $\Pi := (\pi \boxtimes \pi') \boxplus \widetilde{\pi}'' \boxplus (A^2(\pi) \boxtimes \widetilde{\pi}'')\in\mathfrak{A}_{12}$ and
\begin{equation}
\label{eqn:GL2xGL2xGL2}
\begin{aligned}
D_8(s) &= \zeta_F(s)^3,  \\
D_9(s) &= L(s,A^2(\pi))^4 \cdot L(s,A^4(\pi))\cdot L(s,A^2(\pi'))\cdot L(s,A^2(\pi''))^2 \\
    & \cdot L(s,A^2(\pi)\times A^2(\pi'))\cdot L(s,A^2(\pi)\times A^2(\pi''))^3\cdot L(s,A^4(\pi)\times A^2(\pi''))\\
    &\cdot L(s,A^3(\pi)\times(\pi'\boxtimes\pi''))\cdot L(s,A^3(\widetilde{\pi})\times(\tilde{\pi}'\boxtimes\tilde{\pi}'')),\\
D(s)&=L(s,\pi \times \pi' \times \pi'')^2 \cdot L(s,\widetilde\pi \times \widetilde{\pi}' \times \widetilde{\pi}'')^2 \cdot D_8(s) \cdot D_9(s).
\end{aligned}
\end{equation}
By Lemma~\ref{lem:AC}, the logarithm of the analytic conductor of $D(s)$ is $O(\log(\mathfrak{C}_{\pi}\mathfrak{C}_{\pi'}\mathfrak{C}_{\pi''}))$. By Lemma~\ref{lem:CG} and Theorem~\ref{thm:Langlands}, we have that $D(s)=L(s,\Pi\times\tilde{\Pi})$.  Each cuspidal constituent of $\Pi$ has a different rank.  Therefore, $D(s)$ has a pole of order 3 at $s=1$ (coming from $D_8(s)$), while $L(s,\pi \times \pi' \times \pi'')$, $L(s,\tilde{\pi} \times \tilde{\pi}' \times \tilde{\pi}'')$, and $D_9(s)$ are entire.  We conclude that $L(s,\pi\times\pi'\times\pi'')$ has \textcolor{red}{no exceptional zero} by Proposition~\ref{prop:strategy} applied to $D(s)$ in \eqref{eqn:GL2xGL2xGL2}, with $\ell_1 = \ell_2 = 2$ and $k=3$.

\subsection{The twist-equivalent non-dihedral case}
We continue to assume that $\pi,\pi',\pi''$ are non-dihedral.  Without loss of generality, we may assume that there exists $\chi\in\mathfrak{F}_1$ such that $\pi' = \pi \otimes \chi$, in which case
\[
L(s,\pi \times \pi' \times \pi'') = L(s,A^2(\pi) \times (\pi'' \otimes \omega_\pi\chi)) \cdot L(s,\pi'' \otimes \omega_\pi\chi).
\]
Since $\pi,\pi''$ are non-dihedral and $\pi \not\sim \pi''$, it follows from Theorem~\ref{thm:Langlands}(3) that $A^2(\pi) \not\sim A^2(\pi'')$. Therefore, by applying by Proposition~\ref{prop:list}(6) to the first factor and Proposition~\ref{prop:list}(2) to the second, $L(s,\pi \times \pi' \times \pi'')$ has \textcolor{red}{no exceptional zero}.

\subsection{The dihedral case}
Assume (without loss of generality) that $\pi''$ is dihedral by $(\eta,\xi,K)$, in which case $\pi'' = I_K^F(\xi)$.  Write $\pi_K$ (resp. $\pi_K'$) for the base change of $\pi$ (resp. $\pi'$) to $K$.  These are (possibly non-cuspidal) automorphic representations of $\GL_2(\mathbb{A}_K)$.  It follows that
\[
L(s,\pi \times \pi' \times \pi'') = L(s,\pi_K \times (\pi'_K \otimes \xi)),
\]
which factors as a product of $\GL_m \times \GL_n$ $L$-functions over $K$, with $m,n \leq 2$.  Applying Proposition~\ref{prop:list}(2,4) to each of these factors, we conclude that any exceptional zero of $L(s,\pi \times \pi' \times \pi'')$ is a \textcolor{blue}{zero of a self-dual abelian factor}.

\section{Proof of Theorem~\ref{thm:main}(1e)}
\label{sec:1e}

Let $(\pi,\pi',\pi_0) \in \fF_2\times \fF_2\times \fF_3$.  Assume that $\pi_0 \not\sim A^2(\pi)$ and $\pi_0 \not\sim A^2(\pi')$. In this section, we prove the following result.

\begin{theorem}
\label{thm:GL2xGL2xGL3}
Let $(\pi,\pi',\pi_0)\in\mathfrak{F}_2\times \mathfrak{F}_2\times \mathfrak{F}_3$.  If $\pi_0\not\sim A^2(\pi)$ and $\pi_0\not\sim A^2(\pi')$, then $L(s,\pi\times\pi'\times\pi_0)$ has no exceptional zero.
\end{theorem}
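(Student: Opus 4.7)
The plan is a three-case analysis of the triple product, paralleling Section~\ref{sec:1d}. In the twist-inequivalent general case ($\pi \not\sim \pi'$ with $\pi, \pi'$ both non-dihedral), I will construct an auxiliary Dirichlet series $D(s)$ modeled on the one used for Theorem~\ref{thm:GL2xGL2xGL2}, with $\pi_0$ playing the role of the $\GL_2$ factor $\pi''$. Because the summand $A^2(\pi) \boxtimes \tilde\pi_0$ is a $\GL_3 \times \GL_3$ product not known to be modular, I cannot form a bona fide isobaric automorphic representation; instead, I will follow the purely local construction of Lemma~\ref{lem:D} by setting
\[
\Pi_v = (\pi_v \otimes \pi_v') \oplus \tilde\pi_{0,v} \oplus (A^2(\pi_v) \otimes \tilde\pi_{0,v})
\]
and defining $D(s)$ as the global product of standard Rankin--Selberg $L$-functions whose Euler factors at unramified $v$ match $L(s, \Pi_v \otimes \tilde\Pi_v)$. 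Lemma~\ref{lem:CG} together with the Clebsch--Gordan identity $\pi_v \otimes A^2(\tilde\pi_v) = A^3(\pi_v) \oplus \pi_v$ shows that the cross pairing $\sigma_1 \otimes \tilde\sigma_3$ decomposes locally as $(A^3(\pi_v) \oplus \pi_v) \otimes \pi_v' \otimes \pi_{0,v}$, producing an extra copy of $L(s, \pi \times \pi' \times \pi_0)$ beyond the one from $\sigma_1 \otimes \tilde\sigma_2$; the dual direction is analogous. Thus
\[
L(s, \pi \times \pi' \times \pi_0)^2 \cdot L(s, \tilde\pi \times \tilde\pi' \times \tilde\pi_0)^2 \mid D(s),
\]
while each of the three diagonal pairings $\sigma_i \otimes \tilde\sigma_i$ contributes a single simple pole at $s = 1$, giving $k = 3$. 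Proposition~\ref{prop:strategy} with $\ell_1 = \ell_2 = 2$ then applies.

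For the twist-inequivalent reduced case in which $\pi$ is dihedral by a quadratic $\eta$, I will use the factorization $L(s, \pi \times \pi' \times \pi_0) = L(s, \pi \times (\pi' \boxtimes \pi_0))$; by Theorem~\ref{thm:Langlands}(2) and the hypothesis $\pi_0 \not\sim A^2(\pi')$, the factor $\pi' \boxtimes \pi_0 \in \mathfrak{F}_6$ is cuspidal provided $\pi'$ is non-dihedral. Since $\pi \otimes \eta \cong \pi$ while $(\pi' \boxtimes \pi_0) \otimes \eta = (\pi' \otimes \eta) \boxtimes \pi_0 \neq \pi' \boxtimes \pi_0$ (using $\pi' \otimes \eta \neq \pi'$ when $\pi'$ is non-dihedral), Theorem~\ref{thm:main3} gives the conclusion. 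The remaining sub-case in which both $\pi, \pi'$ are dihedral reduces via base change to the common splitting field to Rankin--Selberg $L$-functions over that field, which are controlled by Proposition~\ref{prop:list}. In the twist-equivalent case $\pi' = \pi \otimes \chi$, the factorization
\[
L(s, \pi \times \pi' \times \pi_0) = L(s, A^2(\pi) \times (\pi_0 \otimes \omega_\pi \chi)) \cdot L(s, \pi_0 \otimes \omega_\pi \chi)
\]
combined with the fact that $\pi_0 \not\sim A^2(\pi)$ implies $\pi_0 \otimes \omega_\pi \chi \not\sim A^2(\pi)$ lets me conclude via Theorem~\ref{thm:GL3xGL3} applied to the first factor and Proposition~\ref{prop:list}(2) applied to the second.

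The hard part will be verifying that $D(s)$ has pole of order exactly $3$ in the general case. The diagonal term $T_{33} = (A^2(\pi) \otimes \tilde\pi_0) \otimes (A^2(\pi) \otimes \pi_0)$ decomposes via $A^2(\pi_v) \otimes A^2(\pi_v) = \mathbbm{1}_v \oplus A^2(\pi_v) \oplus A^4(\pi_v)$ and the local expansion $\pi_{0,v} \otimes \tilde\pi_{0,v} = \mathbbm{1}_v \oplus \mathrm{Ad}(\pi_{0,v})$ into six pieces; the claim that only $\mathbbm{1} \otimes \mathbbm{1}$ contributes a pole reduces to showing that the $L$-factors $L(s, A^j(\pi) \times \mathrm{Ad}(\pi_0))$ are holomorphic at $s = 1$ for $j \in \{2, 4\}$. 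By Lemma~\ref{lem:exercise} and Theorem~\ref{thm:Langlands}(3), a matching cuspidal constituent of $\mathrm{Ad}(\pi_0)$ with $A^j(\pi)$ could only arise if $\pi_0$ were a $\GL_1$-twist of $A^2(\varpi)$ for some $\varpi \in \mathfrak{F}_2$ with $\varpi \sim \pi$, which the hypothesis $\pi_0 \not\sim A^2(\pi)$ forbids. Similar reasoning, invoking also $\pi \not\sim \pi'$ and $\pi_0 \not\sim A^2(\pi')$, controls the remaining cross terms $T_{13}$ and $T_{31}$, where the companion factor $L(s, A^3(\pi) \times \pi' \times \pi_0)$ must also be shown to be entire.
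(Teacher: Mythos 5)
Your overall architecture (general / dihedral / twist-equivalent cases, with the twist-equivalent case handled exactly as in the paper via Theorem~\ref{thm:GL3xGL3} and Proposition~\ref{prop:list}(2)) is right, but the construction in the main twist-inequivalent case does not work, and this is the heart of the theorem. By putting $\widetilde{\pi}_0$ into two constituents, $\Pi_v=(\pi_v\otimes\pi_v')\oplus\widetilde{\pi}_{0,v}\oplus(A^2(\pi_v)\otimes\widetilde{\pi}_{0,v})$, you force $L(s,\Pi_v\otimes\widetilde{\Pi}_v)$ to contain the pairings $\sigma_2\otimes\widetilde{\sigma}_3$, $\sigma_3\otimes\widetilde{\sigma}_2$ and $\sigma_3\otimes\widetilde{\sigma}_3$, whose local factors are of the shape $A^j(\pi_v)\otimes\pi_{0,v}\otimes\widetilde{\pi}_{0,v}$ with $j\in\{2,4\}$ (you discuss $T_{33}$ but overlook $T_{23}$, $T_{32}$ altogether). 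No regrouping expresses these as Rankin--Selberg $L$-functions of pairs of isobaric automorphic representations: neither $A^2(\pi)\boxtimes\pi_0$, nor $\pi_0\boxtimes\widetilde{\pi}_0$, nor $\mathrm{Ad}(\pi_0)$ is known to be automorphic, so the ``$L$-factors'' $L(s,A^j(\pi)\times\mathrm{Ad}(\pi_0))$ you propose to control are not known to admit meromorphic continuation at all, and your question about ``matching cuspidal constituents of $\mathrm{Ad}(\pi_0)$'' presupposes an isobaric decomposition that does not exist with current technology. Consequently the global $D(s)$ you define is not a product of Rankin--Selberg $L$-functions, and Proposition~\ref{prop:strategy}/Lemma~\ref{lem:GHL} simply do not apply; the issue is not verifying that the pole has order $3$, but that the requisite analytic properties of $D(s)$ are unknown. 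The paper's construction is designed precisely to avoid this: it takes $\Pi=(\pi\boxtimes\pi_0)\boxplus\widetilde{\pi}'\boxplus(A^2(\pi)\boxtimes\widetilde{\pi}')\in\mathfrak{A}_{14}$, where all three constituents are genuinely automorphic (Kim--Shahidi for $\pi\boxtimes\pi_0$ and Theorem~\ref{thm:Langlands}(3) for $A^2(\pi)\boxtimes\widetilde{\pi}'$), so $\pi_0$ appears only once; then every factor of $L(s,\Pi\times\widetilde{\Pi})$ is a bona fide Rankin--Selberg $L$-function (see \eqref{eqn:GL2xGL2xGL3}), the two copies of $L(s,\pi\times\pi'\times\pi_0)$ come from $\sigma_1\times\widetilde{\sigma}_2$ and the Clebsch--Gordan decomposition of $\sigma_1\times\widetilde{\sigma}_3$ (with companion $L(s,A^3(\pi)\times(\pi'\boxtimes\pi_0))$, entire since $\pi'\boxtimes\pi_0\in\mathfrak{F}_6$), and the pole order $3$ is immediate because the constituents have distinct degrees.

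Two further points. In your dihedral case, the inference ``$\pi'\otimes\eta\neq\pi'$ hence $(\pi'\boxtimes\pi_0)\otimes\eta=(\pi'\otimes\eta)\boxtimes\pi_0\neq\pi'\boxtimes\pi_0$'' is unjustified: one cannot cancel $\boxtimes\,\pi_0$, and a priori the $\GL_6$ form $\pi'\boxtimes\pi_0$ could acquire a self-twist by $\eta$ (equivalently, become induced from $K$) even when $\pi'$ does not, e.g.\ if $(\pi_0)_K$ becomes a twist of $\Sym^2(\pi'_K)$. The paper sidesteps this by applying Theorem~\ref{thm:main3} in the opposite configuration --- the self-twisting factor is $\pi\boxtimes\pi'$ (cuspidal by Ramakrishnan's Theorem M in the relevant sub-case) paired against $\pi_0$, using only the easy fact that a $\GL_3$ cusp form admits no quadratic self-twist --- after first base-changing to $K$ and splitting into sub-cases according to whether $\pi'_K$ is cuspidal and whether $\pi'_K\otimes\xi=\pi'_K\otimes\xi^{\theta}$; this also treats $\pi'$ dihedral or non-dihedral uniformly, whereas your ``both dihedral'' sub-case is only a sketch. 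These secondary issues are repairable along the paper's lines, but the general case needs the paper's choice of $\Pi$ (or something equally automorphic), not the one you propose.
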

\begin{remark}
By the work of Kim and Shahidi~\cite{KimShahidi}, we can view $L(s,\pi\times\pi'\times\pi_0)$ as a $\GL_2\times\GL_6$ Rankin--Selberg $L$-function.  If there exists $\psi\in\mathfrak{F}_1$ such that $\pi_0=A^2(\pi)\otimes\psi$, then
\[
L(s,\pi\times\pi'\times\pi_0)=L(s,A^3(\pi)\times(\pi'\otimes\psi))\cdot L(s,\pi\times(\pi'\otimes\psi)),
\]
and Theorem~\ref{thm:GL2xGL4} (below) applies.
\end{remark}
\begin{proof}[Proof of Theorem~\ref{thm:main}(1e)]
The hypotheses are a special case of those in Theorem~\ref{thm:GL2xGL2xGL3}.
\end{proof}

\subsection{The twist-inequivalent non-dihedral case}
Assume that $\pi,\pi'$ are non-dihedral and $\pi \not\sim \pi'$.  Since $\pi_0\not\sim A^2(\pi)$ and $\pi_0\not\sim A^2(\pi')$ by hypothesis, Theorem~\ref{thm:Langlands} ensures that the representations $\pi\boxtimes\pi_0$, $\pi'\boxtimes\pi_0$, and $A^2(\pi)\boxtimes \tilde{\pi}'$ all lie in $\mathfrak{F}_6$.  We may therefore define the isobaric sum
\[
\Pi := (\pi \boxtimes \pi_0) \boxplus \widetilde{\pi}' \boxplus (A^2(\pi) \boxtimes \widetilde{\pi}')\in \fA_{14}
\]
as well as
\begin{equation}
\label{eqn:GL2xGL2xGL3}
\begin{aligned}
    D_{10}(s) &= \zeta_F(s)^2\cdot L(s,(\pi \boxtimes \pi_0) \times (\widetilde\pi \boxtimes \widetilde{\pi}_0)),\\
    D_{11}(s) &= L(s,A^2(\pi))^3\cdot L(s,A^4(\pi)) \cdot L(s,A^2(\pi'))^2 \cdot L(s,A^2(\pi)\times A^2(\pi'))^3 \\
    &\cdot L(s,A^4(\pi)\times A^2(\pi'))\cdot L(s,A^3(\pi)\times(\pi'\boxtimes\pi_0))\cdot L(s,A^3(\widetilde{\pi})\times(\tilde{\pi}'\boxtimes\tilde{\pi}_0)),\\
    D(s)&=L(s,\pi\times\pi'\times\pi_0)^2 \cdot L(s,\tilde{\pi}\times\tilde{\pi}'\times\tilde{\pi}_0)^2 \cdot D_{10}(s)\cdot D_{11}(s).
\end{aligned}
\end{equation}
By Lemma~\ref{lem:AC}, the logarithm of the analytic conductor of $D(s)$ is $O(\log(\mathfrak{C}_{\pi}\mathfrak{C}_{\pi'}\mathfrak{C}_{\pi_0}))$. By Lemma~\ref{lem:CG} and Theorem~\ref{thm:Langlands}, $D(s) = L(s,\Pi\times\tilde{\Pi})$.  Since $\pi \boxtimes \pi_0, A^2(\pi) \boxtimes \tilde{\pi}' \in \fF_6$, it follows that $D_{10}(s)$ has a pole of order $3$ at $s=1$.  Since $\pi'\boxtimes\pi_0\in\mathfrak{F}_6$, the $L$-function $L(s,A^3(\pi) \times (\pi' \boxtimes \pi_0))$ (hence $D_{11}(s)$) is entire.  Therefore, $L(s,\pi \times \pi' \times \pi_0)$ has \textcolor{red}{no exceptional zero} by Proposition~\ref{prop:strategy} applied to $D(s)$ in \eqref{eqn:GL2xGL2xGL3}, with $\ell_1 = \ell_2 = 2$ and $k = 3$. 

\subsection{The twist-equivalent non-dihedral case}
We continue to assume that $\pi,\pi'$ are non-dihedral and $\pi_0 \not\sim A^2(\pi)$, but we now assume that there exists $\chi\in\mathfrak{F}_1$ such that $\pi' = \pi \otimes \chi$, in which case
\[
L(s,\pi \times \pi' \times \pi_0) = L(s,A^2(\pi) \times (\pi_0 \otimes \omega_\pi\chi)) \cdot L(s,\pi_0 \otimes \omega_\pi\chi).
\]
Applying Theorem~\ref{thm:GL3xGL3} to the first factor and Proposition~\ref{prop:list}(2) to the second factor, we conclude that $L(s,\pi \times \pi' \times \pi_0)$ has \textcolor{red}{no exceptional zero}.

\subsection{The dihedral case}
Assume that $\pi$ is dihedral by $(\eta,\xi,K)$, in which case $\pi = I_K^F(\xi)$.  If $\pi'_K$ (resp. $(\pi_0)_K$) is the base change of $\pi$ (resp. $\pi_0$) from $F$ to $K$, then
\[
L(s,\pi \times \pi' \times \pi_0) = L(s,\pi_K' \times ((\pi_0)_K \otimes \xi)).
\]
Let $\theta = \theta_{K/F}$ be the non-trivial element in Gal$(K/F)$, and set $\xi^\theta = \xi \circ \theta$. Since $\eta$ is a quadratic character, we conclude that $\pi_0 \neq \pi_0 \otimes \eta$.  Therefore, by~\cite[Proposition 2.3.1(5)]{Ramakrishnan} (which summarizes the results in~\cite{AC}), $(\pi_0)_K$ is cuspidal.  It remains to consider the following three cases.
\subsubsection{$\pi_K'$ is non-cuspidal}
Here, the base change $\pi'_K$ is a non-cuspidal isobaric automorphic representation of $\GL_2(\mathbb{A}_K)$.  It is therefore an isobaric sum of two idele class characters over $K$.  We already know that $(\pi_0)_K$ is a cuspidal automorphic representation of $\GL_3(\mathbb{A}_K)$, so $L(s,\pi_K' \times (\pi_0)_K \otimes \xi)$ decomposes as a product of $L$-functions of cuspidal automorphic representations of $\GL_3(\mathbb{A}_K)$, \textcolor{red}{which has no exceptional zero} by Proposition~\ref{prop:list}(2).
\subsubsection{$\pi_K'$ is cuspidal, and $\pi'_K \otimes \xi \neq \pi'_K \otimes \xi^\theta$}
By~\cite[Theorem M]{Ramakrishnan}, $\pi \boxtimes \pi'$ is cuspidal. Note that 
\[
(\pi \boxtimes \pi') \otimes \eta = (\pi \otimes \eta) \boxtimes \pi' = \pi \boxtimes \pi'\qquad\textup{and}\qquad\pi_0 \neq \pi_0 \otimes \eta.
\]
Therefore, $L(s,\pi\times \pi'\times \pi_0)$ has \textcolor{red}{no exceptional zero} by Theorem~\ref{thm:main3}.
\subsubsection{$\pi'_K$ is cuspidal, and $\pi'_K \otimes \xi = \pi'_K \otimes \xi^\theta$}
In this case, we have the identity $\pi'_K = \pi'_K \otimes \xi^\theta\xi^{-1}$.  If $M$ is the quadratic extension of $K$ associated to the quadratic character $\xi^\theta\xi^{-1}$ by class field theory, then there exists an idele class character $\psi$ defined over $M$ such that $\pi'_K = I_M^K(\psi)$.  It follows that
\[
L(s,\pi_K' \times ((\pi_0)_K \otimes \xi)) = L(s,((\pi_0)_K \otimes \xi)_M \otimes \psi).
\]
By~\cite[Proposition 2.3.1(5)]{Ramakrishnan}, this is the $L$-function of a cuspidal automorphic representation of $\GL_3(\mathbb{A}_M)$.  Therefore, $L(s,\pi\times \pi'\times \pi_0)$ has \textcolor{red}{no exceptional zero} by Proposition~\ref{prop:list}(2).

\section{Proof of Theorem~\ref{thm:main}(1f)}
\label{sec:1f}

Let $\pi,\pi',\pi''\in\mathfrak{F}_2$, and suppose that $\pi\not\sim\pi'$.  In this section, we will prove the following result.

\begin{theorem}
\label{thm:GL2xGL3xGL3}
If $\pi,\pi',\pi''\in\mathfrak{F}_2$ and $\pi\not\sim\pi'$, then any exceptional zero of $L(s,\pi\times A^2(\pi')\times A^2(\pi''))$ is a zero of a self-dual abelian factor.  No such factor exists when $\pi$ and $\pi'$ are non-dihedral.
\end{theorem}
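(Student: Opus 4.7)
The plan is to follow the case analysis of Section~\ref{subsec:OurApproach}, with the twist-inequivalent general case handled by a Dirichlet series construction in the spirit of Lemma~\ref{lem:D} adapted to three automorphic factors. First I would handle the twist-equivalent subcases by Clebsch--Gordan: if $\pi \sim \pi''$, so $\pi'' = \pi \otimes \chi$, then $\pi \otimes A^2(\pi) = A^3(\pi) \oplus \pi$ yields
\[
L(s, \pi \times A^2(\pi') \times A^2(\pi'')) = L(s, A^3(\pi) \times (A^2(\pi') \otimes \chi^2)) \cdot L(s, \pi \times (A^2(\pi') \otimes \chi^2)),
\]
each factor dispatched by Theorem~\ref{thm:GL4xGL3} and Proposition~\ref{prop:list}(6); the $\pi' \sim \pi''$ subcase uses $A^2(\pi') \otimes A^2(\pi') = \mathbbm{1} \oplus A^2(\pi') \oplus A^4(\pi')$ together with Proposition~\ref{prop:list}(2,6) and Theorem~\ref{thm:GL5xGL2}. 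Dihedral cases follow the template of Section~\ref{sec:1e}: when $\pi$ or $\pi'$ is dihedral, Lemma~\ref{symmetric power decomposition}(2) factors $A^2$ and reduces the target to lower-rank $L$-functions whose exceptional zeros (if any) are zeros of self-dual abelian factors by Proposition~\ref{prop:list}(2,4,6); when only $\pi''$ is dihedral with $\pi'' = I_K^F(\xi)$, the target becomes a base change $L(s, \pi_K \times A^2(\pi'_K) \otimes \xi)$ over $K$, handled by case analysis on the cuspidality and Galois-invariance of $\pi'_K \otimes \xi$.

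For the main twist-inequivalent non-dihedral case, with $\pi, \pi', \pi''$ additionally assumed not of solvable polyhedral type (the remaining polyhedral edge cases reducing further via Lemma~\ref{symmetric power decomposition}(3)), I would form the local representation
\[
\Pi_v = \pi_v \oplus \bigl(A^2(\pi'_v) \otimes A^2(\pi''_v)\bigr) \oplus A^2(\pi'_v) \oplus A^2(\pi''_v) \oplus \bigl(\pi_v \otimes A^2(\pi'_v)\bigr)
\]
at each unramified non-archimedean place $v$ and set $D(s) = \prod_v L(s, \Pi_v \otimes \tilde{\Pi}_v)$. By Lemma~\ref{lem:CG} and the Kim--Shahidi identity $\sigma \boxtimes A^2(\tau) \in \mathfrak{A}_6$, each local factor $L(s, R_{i,v} \otimes \tilde{R}_{j,v})$ decomposes into a product of standard Rankin--Selberg $L$-functions of rank at most $\mathrm{GL}_6 \times \mathrm{GL}_5$ (with analytic conductors controlled by $\mathfrak{C}_\pi \mathfrak{C}_{\pi'} \mathfrak{C}_{\pi''}$ via Lemma~\ref{lem:AC} and~\eqref{eqn:BH}), and the unramified Dirichlet coefficients of $D(s)$ are nonnegative squares $|a_{\Pi_v}|^2$ as in Lemma~\ref{lem:D}. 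A pole count then gives $D(s)$ a pole of order exactly $k = 5$ at $s = 1$ (one from each of the five diagonal pairs, with no contribution from cross pairs since $R_i \not\cong R_j$ for $i \neq j$), while the target $L(s, \pi \times A^2(\pi') \times A^2(\pi''))$ and its contragredient jointly appear as factors of $D(s)$ with total multiplicity $\ell_1 + \ell_2 = 6$: direct appearances from the cross pairs $(R_1, R_2), (R_5, R_4)$ and their duals $(R_2, R_1), (R_4, R_5)$, plus two further appearances from $(R_5, R_2)$ and $(R_2, R_5)$ after the Clebsch--Gordan expansion $A^2(\pi'_v) \otimes A^2(\pi'_v) = \mathbbm{1} \oplus A^2(\pi'_v) \oplus A^4(\pi'_v)$. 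Since $6 > 5$, Proposition~\ref{prop:strategy} precludes an exceptional zero.

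The principal obstacle is the design of $\Pi_v$: it must be rich enough that the target multiplicity strictly exceeds the pole order, yet lean enough that no factor of $L(s, \Pi_v \otimes \tilde{\Pi}_v)$ is a triple product of three $\mathrm{GL}_3$ factors, as the $\mathrm{GL}_3 \times \mathrm{GL}_3 \times \mathrm{GL}_3$ triple product is not known to be meromorphic. The ``exotic'' summand $A^2(\pi'_v) \otimes A^2(\pi''_v)$ is decisive: its diagonal contribution yields only a single simple pole (the trivial representation appearing exactly once after Clebsch--Gordan applied to each $A^2$-factor separately), yet its cross terms with $\pi_v$ and with $\pi_v \otimes A^2(\pi'_v)$ produce the additional target copies that tip the multiplicity count above the pole count. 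A symmetric inclusion of $\pi_v \otimes A^2(\pi''_v)$ would reintroduce precisely the forbidden $\mathrm{GL}_3 \times \mathrm{GL}_3 \times \mathrm{GL}_3$ triple product in a cross term, so the asymmetric $\Pi_v$ above is the minimal construction that works.
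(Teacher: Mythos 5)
There is a genuine gap in the main (twist-inequivalent, non-dihedral) case: your pole count for $D(s)=\prod_v L(s,\Pi_v\otimes\tilde{\Pi}_v)$ is wrong. The diagonal pair coming from the summand $A^2(\pi'_v)\otimes A^2(\pi''_v)$ does not contribute only the single pole of $\zeta_F(s)$: after Clebsch--Gordan it contains the factor $L(s,A^4(\pi')\times A^4(\pi''))$, and by Lemma~\ref{counting pole lemma} this $\GL_5\times\GL_5$ $L$-function may have a pole at $s=1$ even when $\pi'\not\sim\pi''$ (order up to $1$ when both are octahedral or both not of solvable polyhedral type, and up to $4$ when both are tetrahedral). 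A pole of $D(s)$ need not come from a trivial-representation factor, so the claim ``the trivial representation appears exactly once'' does not bound $k$. With the correct count your construction gives target multiplicity $\ell_1+\ell_2=6$ (three copies of the target and three of its dual, as you list) against a pole of order $k=5+\mathrm{ord}_{s=1}$-pole of $L(s,A^4(\pi')\times A^4(\pi''))$, so the strict inequality $\ell_1+\ell_2>k$ required by Proposition~\ref{prop:strategy} fails as soon as that order is $\geq 1$, which cannot be excluded. Moreover, your proposed escape hatch---restricting to $\pi,\pi',\pi''$ not of solvable polyhedral type and claiming the ``polyhedral edge cases reduce further via Lemma~\ref{symmetric power decomposition}(3)''---does not work: only $A^2$'s of $\pi',\pi''$ occur in the target, and $A^2$ remains cuspidal for tetrahedral and octahedral forms, so nothing factors. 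The paper's construction avoids exactly this squeeze by weighting the summand $\pi_v\otimes A^2(\pi'_v)$ with multiplicity $2$ (and omitting the standalone $\pi_v$ and $A^2(\pi'_v)$ summands), which yields $8$ copies of the target against $k\leq 7$; the doubly tetrahedral case is then handled inside the same auxiliary $D(s)$ by expanding $L(s,(\pi\boxtimes A^2(\pi''))\times A^4(\pi'))$ via $A^4(\pi')=A^2(\pi')\boxplus\mu\boxplus\bar{\mu}$ to gain extra target copies ($12$ versus $k\leq 10$).

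The peripheral cases also need repair. When $\pi$ is dihedral, $A^2(\pi)$ does not appear in the target, so Lemma~\ref{symmetric power decomposition}(2) gives you nothing to factor; the paper instead splits on whether $\pi\boxtimes A^2(\pi')$ is cuspidal, using the self-twist $(\pi\boxtimes A^2(\pi'))\otimes\eta=\pi\boxtimes A^2(\pi')$ together with Theorem~\ref{thm:main3} in the cuspidal case, and the decomposition $\pi\boxtimes A^2(\pi')=A^2(\pi')\otimes\chi_1\boxplus A^2(\pi')\otimes\chi_2$ of~\cite{RamakrishnanWang2} plus Theorem~\ref{thm:GL3xGL3} otherwise. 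When $\pi''$ is dihedral your base-change identity is not correct as written: $A^2(I_K^F(\xi))=I_K^F(\xi\xi'^{-1})\boxplus\eta$, so the target factors as $L(s,(\pi\boxtimes A^2(\pi'))\times I_K^F(\xi\xi'^{-1}))\cdot L(s,\pi\times(A^2(\pi')\otimes\eta))$ (handled via Theorem~\ref{thm:GL2xGL2xGL3} and Proposition~\ref{prop:list}), not as a single twisted base change by $\xi$. Finally, in the $\pi\sim\pi''$ subcase the twist $\chi^2$ should not appear since $A^2(\pi\otimes\chi)=A^2(\pi)$; this is harmless, but the factors are $L(s,\pi\times A^2(\pi'))$ and $L(s,A^3(\pi)\times A^2(\pi'))$, as in the paper.
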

\begin{remark}
If there exists $\chi\in\mathfrak{F}_1$ such that $\pi'=\pi\otimes\chi$, then
\[
L(s,\pi\times A^2(\pi')\times A^2(\pi'')) = L(s,\pi\times A^2(\pi'')) L(s,A^3(\pi)\times A^2(\pi'')).
\]
The first factor is the subject of Lemma~\ref{lem:Luo}.  The second factor is the subject of Theorem~\ref{thm:GL4xGL3} and the remark that follows it.
\end{remark}

\begin{proof}[Proof of Theorem~\ref{thm:main}(1f)]
In Theorem~\ref{thm:GL2xGL3xGL3}, replace $\pi$ with $\pi\otimes\omega_{\pi'}\omega_{\pi''}$.
\end{proof}

\subsection{The twist-inequivalent general case}
Assume that $\pi,\pi',\pi''$ are pairwise twist-inequivalent, and assume that $\pi'$ and $\pi''$ are non-dihedral. We discuss the following three cases.
\subsubsection{$\pi$ is non-dihedral, and $\pi'$ or $\pi''$ is non-tetrahedral}
It follows that $\pi,\pi',\pi''$ are non-dihedral and pairwise twist-inequivalent.  We now generalize the approach described in Section \ref{subsec:OurApproach}, crucially using Theorem~\ref{thm:Langlands} to ensure that $\pi \boxtimes A^2(\pi'),\pi \boxtimes A^2(\pi'')\in\mathfrak{F}_6$.  Define
\begin{equation}
\label{eqn:GL2xGL3xGL3}
\begin{aligned}
    D_{12}(s) &= \zeta_F(s)^6 \cdot L(s,A^4(\pi') \times A^4(\pi'')), \\
    D_{13}(s) &=L(s,A^2(\pi))^4 \cdot L(s,A^2(\pi'))^7 \cdot L(s,A^4(\pi'))^5 \cdot L(s,A^2(\pi''))^2\cdot L(s,A^4(\pi''))^2\\
    &\cdot L(s,\pi \times A^2(\pi''))^2 \cdot L(s,\widetilde\pi \times A^2(\pi''))^2 \cdot L(s,A^2(\pi')\times A^2(\pi''))^3  \\
    &\cdot L(s,A^2(\pi')\times A^4(\pi''))^3\cdot L(s,A^2(\pi)\times A^2(\pi'))^4\cdot L(s,A^2(\pi)\times A^4(\pi'))^4\\
    &\cdot L(s,A^4(\pi')\times A^2(\pi''))\cdot L(s,(\pi\boxtimes A^2(\pi''))\times A^4(\pi'))^2 \cdot L(s,(\tilde{\pi}\boxtimes A^2(\pi''))\times A^4(\pi'))^2,\\
    D(s)&=L(s,(\pi \boxtimes A^2(\pi')) \times A^2(\pi''))^4 \cdot L(s,(\widetilde\pi \boxtimes A^2(\pi')) \times A^2(\pi''))^4 \cdot D_{12}(s) \cdot D_{13}(s).
\end{aligned}\hspace{-2mm}
\end{equation}
By Lemma~\ref{lem:AC}, the logarithm analytic conductor of $D(s)$ is $O(\log(\mathfrak{C}_{\pi}\mathfrak{C}_{\pi'}\mathfrak{C}_{\pi''}))$.
\begin{lemma}
\label{lem:GL2xGL3xGL3}
Let $D(s)$ be as in \eqref{eqn:GL2xGL3xGL3}, and let $a_D(v^{\ell})\log q_v$ be the $v^{\ell}$-th Dirichlet coefficient of $-(D'/D)(s)$.  If $v\notin S_{\pi}^{\infty}\cup S_{\pi'}^{\infty}\cup S_{\pi''}^{\infty}$, then $a_D(v^{\ell})\geq 0$.
\end{lemma}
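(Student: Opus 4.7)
The plan is to mirror the strategy used in the proof of Lemma~\ref{lem:D}, now applied to a triple-product setting. For a place $v \notin S_{\pi}^{\infty} \cup S_{\pi'}^{\infty} \cup S_{\pi''}^{\infty}$, I will exhibit a finite-dimensional (virtual) representation of the form
\[
\Pi_v = \bigoplus_{\substack{j\in\{0,1\} \\ k\in\{0,1,2\} \\ m\in\{0,1,2\}}} c_{j,k,m}\, A^j(\pi_v)\otimes A^k(\pi_v')\otimes A^m(\pi_v''),
\]
for some non-negative integers $c_{j,k,m}$, such that the $v$-th Euler factor of $D(s)$ agrees with $L(s,\Pi_v\otimes\widetilde{\Pi}_v)$. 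Once this is done, the identity
\[
-\frac{L'}{L}(s,\Pi_v\otimes\widetilde{\Pi}_v) = \sum_{\ell=1}^{\infty}\Bigl|\sum_{j,k,m} c_{j,k,m}\, a_{A^j(\pi)}(v^{\ell})\, a_{A^k(\pi')}(v^{\ell})\, a_{A^m(\pi'')}(v^{\ell})\Bigr|^2\,\frac{\log q_v}{q_v^{\ell s}}
\]
(which, in turn, follows from the trace formula $a_{\Pi_v\otimes\widetilde{\Pi}_v}(v^\ell)=|a_{\Pi_v}(v^\ell)|^2$) immediately yields $a_D(v^{\ell})\geq 0$.

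To read off the correct coefficients $c_{j,k,m}$, I will expand $\Pi_v\otimes\widetilde{\Pi}_v$ using the Clebsch--Gordan rule of Lemma~\ref{lem:CG} applied independently in each of the three tensor slots. This produces a double sum indexed by $(j,k,m),(j',k',m')$ and contracted by parameters $a\in\{0,\ldots,\min(j,j')\}$, $b\in\{0,\ldots,\min(k,k')\}$, $c\in\{0,\ldots,\min(m,m')\}$, each contribution being an $L$-factor of the shape $L(s,A^{j+j'-2a}(\pi_v)\otimes A^{k+k'-2b}(\pi_v')\otimes A^{m+m'-2c}(\pi_v''))$ (times an explicit central character). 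Matching the exponent of each such factor against the exponents appearing in $D(s)$ given by \eqref{eqn:GL2xGL3xGL3} gives a linear system for the $c_{j,k,m}^2$ and the cross-products $c_{j,k,m}c_{j',k',m'}$. The anticipated solution is driven by the leading factor $L(s,(\pi\boxtimes A^2(\pi'))\times A^2(\pi''))^4$: taking $c_{1,2,2}=2$ together with $c_{0,0,0}=1$, $c_{1,0,2}=c_{1,2,0}=c_{0,2,2}=c_{0,2,0}=c_{0,0,2}=1$ (and adjusting the remaining small coefficients) should reproduce $D(s)$ upon expansion, with the $\zeta_F(s)^6$ accounting for the diagonal $c_{j,k,m}^2$ contributions.

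The bookkeeping is the main obstacle: unlike Lemma~\ref{lem:D}, one now has three tensor slots, so the Clebsch--Gordan expansion branches nine-fold at each step, and the central character twists (keeping track of the $\bar{\omega}_\pi^{\mathbf{1}_{2\mid j}\mathbf{1}_{j'=1}}$-type factors from Lemma~\ref{lem:CG}) must be tracked carefully so that, after collecting identical $L$-factors, the resulting multiplicities match the exponents in~\eqref{eqn:GL2xGL3xGL3} exactly. Once the choice of coefficients is verified against every factor on the list, the nonnegativity claim follows, and the lemma is proved.
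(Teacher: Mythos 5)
Your overall strategy is the paper's: at each unramified $v$, exhibit $\Pi_v$ as a nonnegative integer combination of the $A^j(\pi_v)\otimes A^k(\pi_v')\otimes A^m(\pi_v'')$ so that the $v$-th Euler factor of $D(s)$ equals $L(s,\Pi_v\otimes\widetilde{\Pi}_v)$, and then conclude from $a_{\Pi_v\otimes\widetilde{\Pi}_v}(v^{\ell})=|a_{\Pi_v}(v^{\ell})|^2\geq 0$.  The gap is that the identification of $\Pi_v$ -- which is the entire content of the lemma -- is both deferred (``adjusting the remaining small coefficients,'' ``should reproduce $D(s)$'') and, as proposed, incorrect.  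Your candidate $c_{1,2,2}=2$, $c_{0,0,0}=1$, $c_{1,0,2}=c_{1,2,0}=c_{0,2,2}=c_{0,2,0}=c_{0,0,2}=1$ fails immediate consistency checks: the Euler product $D(s)$ in \eqref{eqn:GL2xGL3xGL3} has degree $576=24^2$, so $\dim\Pi_v$ must be $24$, whereas your $\Pi_v$ has dimension $64$; the exponent of $\zeta_F(s)$ in $D(s)$ is $6$, which must equal $\sum_{j,k,m} c_{j,k,m}^2$, whereas your coefficients already give at least $11$; and the self-pairing of the constituent taken with multiplicity $2$, namely $\pi_v\otimes A^2(\pi_v')\otimes A^2(\pi_v'')$, would force factors such as $L(s,A^2(\pi)\times A^4(\pi')\times A^4(\pi''))^4$, which do not occur in \eqref{eqn:GL2xGL3xGL3}.

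The correct choice is smaller: $\Pi_v = 2\,\pi_v\otimes A^2(\pi_v')\oplus A^2(\pi_v'')\oplus A^2(\pi_v')\otimes A^2(\pi_v'')$, i.e.\ $c_{1,2,0}=2$, $c_{0,0,2}=1$, $c_{0,2,2}=1$, and all other coefficients zero.  With this choice,
\[
-\frac{L'}{L}(s,\Pi_v\otimes\widetilde{\Pi}_v)=\sum_{\ell=1}^{\infty}\frac{\bigl|2a_{\pi}(v^{\ell})a_{A^2(\pi')}(v^{\ell})+a_{A^2(\pi'')}(v^{\ell})+a_{A^2(\pi')}(v^{\ell})a_{A^2(\pi'')}(v^{\ell})\bigr|^2\log q_v}{q_v^{\ell s}},
\]
and expanding the square with Lemma~\ref{lem:CG} (using Theorem~\ref{thm:Langlands} to interpret the $\pi\boxtimes A^2(\pi')$ and $\pi\boxtimes A^2(\pi'')$ constituents) reproduces every exponent in \eqref{eqn:GL2xGL3xGL3}: the diagonal terms give $\zeta_F(s)^{2^2+1+1}=\zeta_F(s)^6$ together with $L(s,A^2(\pi))^4$, $L(s,A^4(\pi')\times A^4(\pi''))$, etc., while the leading factors $L(s,(\pi\boxtimes A^2(\pi'))\times A^2(\pi''))^4$ and its dual come from the cross terms of $2a_{\pi}a_{A^2(\pi')}$ against $a_{A^2(\pi'')}$ and against $a_{A^2(\pi')}a_{A^2(\pi'')}$.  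Until such a $\Pi_v$ is pinned down and the matching of $L(s,\Pi_v\otimes\widetilde{\Pi}_v)$ with the $v$-th Euler factor of the specific product $D(s)$ is verified, the nonnegativity claim is not established.
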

\begin{proof}
Let $v\notin S_{\pi}^{\infty}\cup S_{\pi'}^{\infty}\cup S_{\pi''}^{\infty}$, and define $\Pi_v = 2\pi_v\otimes A^2(\pi_v')\oplus A^2(\pi_v'')\oplus A^2(\pi_v')\otimes A^2(\pi_v'')$.  On one hand, it follows from Lemma~\ref{lem:CG} and Theorem~\ref{thm:Langlands} that the $v$-th Euler factor of $D(s)$ is $L(s,\Pi_v\otimes\tilde{\Pi}_v)$.  On the other hand, one computes
\[
-\frac{L'}{L}(s,\Pi_v\otimes\tilde{\Pi}_v)=\sum_{\ell=1}^{\infty}\frac{|2a_{\pi}(v^{\ell})a_{A^2(\pi')}(v^{\ell})+a_{A^2(\pi'')}(v^{\ell})+a_{A^2(\pi')}(v^{\ell})a_{A^2(\pi'')}(v^{\ell})|^2\log q_v}{q_v^{\ell s}}.
\]
The desired result follows.
\end{proof}

Since $A^4(\pi') \in \fA_5$ and $\pi \boxtimes A^2(\pi'')\in\mathfrak{F}_6$, $L(s,(\pi \boxtimes A^2(\pi'')) \times A^4(\pi'))$ is entire.  By Lemma~\ref{counting pole lemma}, all other factors of $D_{13}(s)$ are entire, hence $D_{13}(s)$ is entire.  Moreover, by Lemma~\ref{counting pole lemma}(3,4) and our assumption that $\pi',\pi''$ are not both tetrahedral, $L(s,A^4(\pi') \times A^4(\pi''))$ (resp. $D_{12}(s)$) has at most a simple pole (resp. has a pole of order at most $7$) at $s=1$. Therefore, $L(s,(\pi \boxtimes A^2(\pi')) \times A^2(\pi''))$ has \textcolor{red}{no exceptional zero} by Proposition~\ref{prop:strategy} applied to $D(s)$ in \eqref{eqn:GL2xGL3xGL3}, with $\ell_1 = \ell_2 = 4$ and $k \leq 7$.
\subsubsection{$\pi$ is non-dihedral, $\pi'$ and $\pi''$ are tetrahedral}
If $\pi'$ is tetrahedral by $\mu$, then Lemma~\ref{symmetric power decomposition}(3a) implies that
\[
L(s,(\pi \boxtimes A^2(\pi'')) \times A^4(\pi')) = L(s,(\pi \boxtimes A^2(\pi')) \times A^2(\pi'')) \cdot L(s,\pi \times (A^2(\pi'') \otimes \mu)) \cdot L(s,\pi \times (A^2(\pi'') \otimes \mu^2)).
\]
Using this decomposition, we find that if
\begin{align*}
D_{14}(s) &=L(s,A^2(\pi))^4 \cdot L(s,A^2(\pi'))^7 \cdot L(s,A^4(\pi'))^5 \cdot L(s,A^2(\pi''))^2\cdot L(s,A^4(\pi''))^2\\
    &\cdot L(s,\pi \times A^2(\pi''))^2 \cdot L(s,\widetilde\pi \times A^2(\pi''))^2 \cdot L(s,A^2(\pi')\times A^2(\pi''))^3  \\
    &\cdot L(s,A^2(\pi')\times A^4(\pi''))^3\cdot L(s,A^2(\pi)\times A^2(\pi'))^4\cdot L(s,A^2(\pi)\times A^4(\pi'))^4\\
    &\cdot L(s,A^4(\pi')\times A^2(\pi''))\cdot L(s,\pi \times (A^2(\pi'') \otimes \mu))^2 \cdot L(s,\pi \times (A^2(\pi'') \otimes \mu^2))^2\\
    &\cdot L(s,\tilde{\pi} \times (A^2(\pi'') \otimes \bar{\mu}))^2 \cdot L(s,\tilde{\pi} \times (A^2(\pi'') \otimes \bar{\mu}^2))^2,
\end{align*}
then $D(s)$ in \eqref{eqn:GL2xGL3xGL3} satisfies
\begin{equation}
\label{eqn:GL2xGL3xGL3_2}
D(s)= L(s,\pi \times A^2(\pi') \times A^2(\pi''))^6 \cdot L(s,\widetilde\pi \times A^2(\pi') \times A^2(\pi''))^6 \cdot D_{12}(s) \cdot D_{14}(s).
\end{equation}

By Lemma~\ref{counting pole lemma}, $D_{14}(s)$ is entire, and $L(s,A^4(\pi') \times A^4(\pi''))$ (resp. $D_{12}(s)$) has a pole of order at most 4 (resp. at most $10$) at $s=1$.  Therefore, $L(s,\pi \times A^2(\pi') \times A^2(\pi''))$ has \textcolor{red}{no exceptional zero} by Proposition~\ref{prop:strategy} applied to $D(s)$ in \eqref{eqn:GL2xGL3xGL3_2}, with $\ell_1 = \ell_2 = 6$ and $k \leq 10$.

\subsubsection{$\pi$ is dihedral}
\label{subsubsec:GL2xGL3xGL3_pi_dihedral}

We now assume that $\pi$ is dihedral by $(\eta,\xi,K)$.  This introduces two cases.

{\it Case \ref{subsubsec:GL2xGL3xGL3_pi_dihedral}a:  $\pi \boxtimes A^2(\pi')$ is cuspidal.}  In this case, we have that
\[
(\pi \boxtimes A^2(\pi')) \otimes \eta = (\pi \otimes \eta) \boxtimes A^2(\pi') = \pi \boxtimes A^2(\pi').
\]
Since $\eta$ is a non-trivial quadratic character, we also have that $A^2(\pi'') \neq A^2(\pi'') \otimes \eta$.  Therefore, by Theorem~\ref{thm:main3}, $L(s,\pi \times A^2(\pi') \times A^2(\pi''))$ has \textcolor{red}{no exceptional zero.}

{\it Case \ref{subsubsec:GL2xGL3xGL3_pi_dihedral}b:  $\pi \boxtimes A^2(\pi')$ is not cuspidal.}  By~\cite[Theorem 9.1]{RamakrishnanWang2}, there exist $\chi_1,\chi_2\in\mathfrak{F}_1$ such that $\pi\boxtimes A^2(\pi')=A^2(\pi')\otimes\chi_1\boxplus A^2(\pi')\otimes\chi_2$.  It follows that
\[
L(s,(\pi \boxtimes A^2(\pi')) \times A^2(\pi'')) = L(s,A^2(\pi') \times (A^2(\pi'') \otimes \chi_1)) \cdot L(s,A^2(\pi') \times (A^2(\pi'') \otimes \chi_2)).
\]
Applying Theorem~\ref{thm:GL3xGL3} to each factor, we conclude that $L(s,(\pi \boxtimes A^2(\pi')) \times A^2(\pi''))$ has \textcolor{red}{no exceptional zero}.

\subsection{The twist-inequivalent reduced case}
Assume that $\pi,\pi',\pi''$ are pairwise twist-inequivalent, and at least one of $\pi'$ and $\pi''$ is dihedral.  Without loss of generality, we assume that $\pi'$ is dihedral by $(\eta,\xi,K)$.  This introduces the following two cases.
\subsubsection{$\pi''$ is non-dihedral}
By Lemma~\ref{symmetric power decomposition}(2), we have the identity
\[
L(s,(\pi \boxtimes A^2(\pi')) \times A^2(\pi'')) = L(s,(\pi  \boxtimes A^2(\pi''))\times I_K^F(\xi{\xi'}^{-1})) \cdot L(s,\pi \times (A^2(\pi'') \otimes \eta)),
\]
Since $\pi''$ is non-dihedral, $A^2(\pi'')\in\mathfrak{F}_3$ while $A^2(I_K^F(\xi{\xi'}^{-1}))$ is not, hence $A^2(\pi'')\not\sim A^2(I_K^F(\xi{\xi'}^{-1}))$.  Since $\pi\not\sim\pi''$ by hypothesis, it follows from Theorem~\ref{thm:Langlands}(3) that $A^2(\pi)\not\sim A^2(\pi'')$.  By applying  Theorem~\ref{thm:GL2xGL2xGL3} to the first factor and Proposition~\ref{prop:list}(6) to the second factor, we conclude that $L(s,(\pi \boxtimes A^2(\pi')) \times A^2(\pi''))$ has \textcolor{red}{no exceptional zero}.
\subsubsection{$\pi''$ is dihedral}
It follows from Lemma~\ref{symmetric power decomposition}(2), that there exist non-trivial quadratic characters $\eta',\eta'' \in \fF_1$ and dihedral $\nu',\nu'' \in \fF_2$ such that
\[
    L(s,(\pi \boxtimes A^2(\pi')) \times A^2(\pi'')) = L(s,(\pi \boxtimes \nu') \times \nu'') \cdot L(s,\pi \times (\nu' \otimes \eta'')) \cdot L(s,\pi \times (\nu'' \otimes \eta')) \cdot L(s,\pi \otimes \eta'\eta'').
\]
Applying Theorem~\ref{thm:GL2xGL2xGL2} to the first factor and Proposition~\ref{prop:list}(2,4) to the others, we conclude that any exceptional zero of $L(s,(\pi\boxtimes A^2(\pi'))\times A^2(\pi''))$ is a \textcolor{blue}{zero of a self-dual abelian $L$-factor}.

\subsection{The twist-equivalent case}
Assume that exactly two of $\pi,\pi',\pi''$ are twist-equivalent.  Without loss of generality, this introduces two cases.
\subsubsection{$\pi\not\sim\pi'$ and $\pi' \sim \pi''$}
In this case we have $A^2(\pi') = A^2(\pi'')$, and Lemma~\ref{lem:CG} implies that
\[
L(s,\pi \times A^2(\pi') \times A^2(\pi'')) = L(s,\pi) \cdot L(s,\pi \times A^2(\pi')) \cdot L(s,\pi \times A^4(\pi')).
\]
Applying Proposition~\ref{prop:list}(2) to the first factor, Proposition~\ref{prop:list}(6) to the second, and Theorem~\ref{thm:GL5xGL2} to the third, we conclude that any exceptional zero of $L(s,\pi \times A^2(\pi') \times A^2(\pi''))$ is a \textcolor{blue}{zero of a self-dual abelian factor}.  Also, if $\pi$ and $\pi'$ are non-dihedral, then there is \textcolor{red}{no exceptional zero}.
\subsubsection{$\pi\not\sim\pi'$ and $\pi\sim\pi''$}
Here, we have that $A^2(\pi'') = A^2(\pi)$, so Lemma~\ref{lem:CG} implies that
\[
L(s,\pi \times A^2(\pi') \times A^2(\pi'')) = L(s,\pi \times A^2(\pi')) \cdot L(s,A^3(\pi) \times A^2(\pi')).
\]
Applying Proposition~\ref{prop:list}(6) to the first factor and Theorem~\ref{thm:GL4xGL3} to the second factor, we conclude that any exceptional zero of $L(s,\pi \times A^2(\pi') \times A^2(\pi''))$ is a  \textcolor{blue}{zero of a self-dual abelian factor}.  In particular, if $\pi$ and $\pi'$ are non-dihedral, then there is \textcolor{red}{no exceptional zero}.

\section{Proof of Theorem~\ref{thm:main}(2)}
\label{sec:2}

Let $\pi,\pi'\in\mathfrak{F}_2$.  In this section, we prove the following result.

\begin{theorem}
\label{thm:GL2xGL4}
Let $\pi,\pi'\in\mathfrak{F}_2$.  If $A^3(\pi)\neq A^3(\widetilde{\pi}')$ or $\pi\sim\pi'$, then any exceptional zero of $L(s,A^3(\pi)\times\pi')$ is a zero of a self-dual abelian factor.  If $\pi\not\sim\pi'$ and at least one of $\pi,\pi'$ is non-dihedral, then no such factor exists.
\end{theorem}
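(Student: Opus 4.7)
The plan is to case-split on whether $\pi \sim \pi'$, and within the twist-inequivalent case on the solvable polyhedral type of $\pi$. I would first handle the \emph{twist-equivalent case}: writing $\pi' = \pi \otimes \chi$, Lemma~\ref{lem:CG} applied to $A^3(\pi_v) \otimes \pi_v = A^4(\pi_v)\otimes\omega_\pi \oplus A^2(\pi_v)\otimes\omega_\pi$ gives the clean factorization
\[
L(s, A^3(\pi) \times \pi') = L(s, A^4(\pi) \otimes \omega_\pi \chi) \cdot L(s, A^2(\pi) \otimes \omega_\pi \chi),
\]
after which Theorem~\ref{symmetric fourth no Siegel zero} localizes any exceptional zero of the first factor to a zero of a self-dual abelian factor, and Proposition~\ref{prop:list}(2) handles the second (cuspidal on $\GL_3$ when $\pi$ is non-dihedral, otherwise decomposing further into abelian pieces).

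For the \emph{twist-inequivalent case}, I would split by the type of $\pi$. When $\pi$ is dihedral or tetrahedral by $\mu$, Lemma~\ref{symmetric power decomposition}(2,3a) decomposes $A^3(\pi)$ into dihedral $\GL_2$ cuspidals or into $\pi\otimes\mu\boxplus\pi\otimes\bar\mu$, reducing $L(s, A^3(\pi)\times\pi')$ to $\GL_2\times\GL_2$ pieces handled by Proposition~\ref{prop:list}(4). When $\pi$ is octahedral by $\eta$, the self-twist identity $A^3(\pi) = A^3(\pi)\otimes\eta$ together with Theorem~\ref{thm:main3} (with $\psi = \eta$) finishes unless $\pi' = \pi'\otimes\eta$, i.e.\ $\pi'$ is dihedral by the same $\eta$; in that residual subcase I would base-change to the quadratic extension $K/F$ attached to $\eta$, where $\pi_K$ is classically tetrahedral, so that Lemma~\ref{symmetric power decomposition}(3a) over $K$ yields $L(s, A^3(\pi)\times\pi') = L_K(s, A^3(\pi_K)\otimes\xi')$ as a product of $\GL_2/K$ L-functions handled by Proposition~\ref{prop:list}(2).

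The main subcase is $\pi$ not of solvable polyhedral type, where $A^3(\pi)\in\mathfrak{F}_4$ and $A^4(\pi)\in\mathfrak{F}_5$ are both cuspidal. Here I would invoke Lemma~\ref{lem:D} with $\chi = \mathbbm{1}$ and coefficients $c_{2,0,0} = c_{1,1,0} = 2$, $c_{2,2,0} = 1$, all other $c_{j,k,r} = 0$; equivalently
\[
\Pi_v = 2\, A^2(\pi_v) \oplus 2\, \pi_v\otimes\pi_v' \oplus A^2(\pi_v)\otimes A^2(\pi_v').
\]
A direct Clebsch--Gordan unwinding shows that the ordered pairs $(1,1,0; 2,0,0)$ and $(1,1,0; 2,2,0)$ each produce one copy of $L(s, A^3(\pi)\times\pi')$ in $D(s)$, giving $\ell_1 = c_{1,1,0}(c_{2,0,0} + c_{2,2,0}) = 6$, while the conjugate pairs $(2,0,0;1,1,0)$ and $(2,2,0;1,1,0)$ produce $\ell_2 = 6$ copies of $L(s, A^3(\tilde\pi)\times\tilde\pi')$, for a total of $\ell_1 + \ell_2 = 12$.

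The key step, and the main obstacle, is bounding the pole order $k$ of $D(s)$ at $s=1$ uniformly in the type of $\pi'$: the three diagonal pairs contribute $\zeta_F(s)^9$; the factor $L(s, A^4(\pi)\times A^4(\pi'))$ contributes a pole of order at most $1$ by Lemma~\ref{counting pole lemma}, and is entire whenever $\pi'$ is of solvable polyhedral type; the factor $L(s, A^4(\pi'))$ contributes a pole of order at most $2$, occurring only when $\pi'$ is dihedral. The simple poles that $L(s, A^3(\pi)\times A^3(\pi'))$ and $L(s, A^3(\tilde\pi)\times A^3(\tilde\pi'))$ could otherwise contribute with multiplicity $c_{1,1,0}c_{2,2,0} = 2$ each are ruled out precisely by the hypothesis $A^3(\pi)\neq A^3(\tilde\pi')$; this is the unique place that hypothesis is used. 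A subcase-by-subcase check in $\pi'$ then yields $k\le 11$ in every situation, so Proposition~\ref{prop:strategy} with $\ell_1+\ell_2 = 12 > 11$ delivers no exceptional zero. The coefficients $(2,2,1)$ must be tuned carefully: the potential double pole of $L(s, A^4(\pi'))$ in the dihedral subcase is dominated by $\ell_1 + \ell_2$ only because the competing pole of $L(s, A^4(\pi)\times A^4(\pi'))$ vanishes automatically in that same subcase, since $A^4(\pi')$ is then non-cuspidal.
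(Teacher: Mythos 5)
Your proposal is correct, and it reaches all the conclusions of Theorem~\ref{thm:GL2xGL4}, but it takes a genuinely different route in the main case. The twist-equivalent case and the dihedral/tetrahedral reduced cases coincide with the paper's treatment (Lemma~\ref{lem:CG} plus Theorem~\ref{symmetric fourth no Siegel zero} and Proposition~\ref{prop:list}(2); Lemma~\ref{symmetric power decomposition}(2,3a) plus Proposition~\ref{prop:list}(4)). In the cuspidal-$A^3(\pi)$ range, however, the paper runs a single $D(s)$ built from Lemma~\ref{lem:D} with $c_{1,0,0}=c_{2,1,0}=2$, $c_{1,2,0}=1$, i.e.\ $\Pi_v=2\pi_v\oplus 2A^2(\pi_v)\otimes\pi_v'\oplus \pi_v\otimes A^2(\pi_v')$; that choice never produces the factor $L(s,A^4(\pi)\times A^4(\pi'))$, so its only polar pieces are $\zeta_F(s)^9$, $L(s,A^4(\pi'))$ (pole $\leq 2$, only for dihedral $\pi'$), and the $A^3\times A^3$ factors killed by the hypothesis $A^3(\pi)\neq A^3(\widetilde{\pi}')$, giving $k\leq 11<12$ uniformly for all $\pi$ with $A^3(\pi)\in\mathfrak{F}_4$ --- in particular octahedral $\pi$ is absorbed into the same computation. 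Your choice $c_{2,0,0}=c_{1,1,0}=2$, $c_{2,2,0}=1$ is also valid: I verified that the ordered pairs produce $L(s,A^3(\pi)\times\pi')^{4+2}$ and $L(s,A^3(\widetilde{\pi})\times\widetilde{\pi}')^{4+2}$ (so ``each produce one copy'' should read ``with exponent equal to the product of the coefficients,'' but your totals $\ell_1=\ell_2=6$ are right), $\zeta_F(s)^9$ from the diagonal, and your key observation --- that the $\leq 2$ pole of $L(s,A^4(\pi'))$ and the $\leq 1$ pole of $L(s,A^4(\pi)\times A^4(\pi'))$ occur in mutually exclusive subcases --- correctly yields $k\leq 11$. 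The price of your symmetric choice is that it needs $A^4(\pi)$ cuspidal, so octahedral $\pi$ must be split off; your use of Theorem~\ref{thm:main3} via the self-twist $A^3(\pi)=A^3(\pi)\otimes\eta$ is clean and even gives ``no exceptional zero'' there without the hypothesis $A^3(\pi)\neq A^3(\widetilde{\pi}')$, which the paper's uniform argument does not separate out.

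Two small points to tighten. First, in the residual octahedral subcase ($\pi'=\pi'\otimes\eta$) you assert that $\pi_K$ is tetrahedral over the quadratic field $K$ cut out by $\eta$; this is true (it is part of Kim--Shahidi's analysis of the octahedral case) but it is not among the facts quoted in Lemma~\ref{symmetric power decomposition}, so it needs a citation. A route staying closer to the paper's toolkit: since $A^3(\pi)\in\mathfrak{F}_4$ admits the self-twist by the quadratic $\eta$, automorphic induction (Arthur--Clozel) gives $A^3(\pi)=I_K^F(\Sigma)$ with $\Sigma$ cuspidal on $\GL_2(\mathbb{A}_K)$, whence $L(s,A^3(\pi)\times\pi')=L_K(s,\Sigma\otimes\xi)\cdot L_K(s,\Sigma^{\theta}\otimes\xi)$ for $\pi'=I_K^F(\xi)$, and Proposition~\ref{prop:list}(2) finishes. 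Second, Lemma~\ref{lem:D} is stated for non-dihedral, twist-inequivalent $\pi,\pi'$, while your (and the paper's) application allows $\pi'$ dihedral; the positivity of the unramified coefficients does not use that hypothesis, but it is worth saying so explicitly, and likewise the entirety of $L(s,A^4(\pi)\times A^4(\pi'))$ for $\pi'$ of solvable polyhedral type should be justified by the non-cuspidality of $A^4(\pi')$ against the rank-$5$ cuspidal $A^4(\pi)$ rather than by Lemma~\ref{counting pole lemma}, whose hypotheses exclude dihedral $\pi'$.
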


\begin{proof}[Proof of Theorem~\ref{thm:main}(2)]
In Theorem~\ref{thm:GL2xGL4}, replace $\pi'$ with $\pi'\otimes\omega_{\pi}$.
\end{proof}

\subsection{The twist-inequivalent general case} Assume that $\pi \not\sim \pi'$ and $A^3(\pi)\in\mathfrak{F}_4$.  Define
\begin{equation}
\label{eqn:GL4xGL2}
\begin{aligned}
    D_{15}(s) &= \zeta_F(s)^9 \cdot L(s,A^3(\pi) \times A^3(\pi'))^4 \cdot L(s,A^3(\widetilde{\pi})\times A^3(\widetilde{\pi}'))^4 \cdot L(s,A^4(\pi'))  \\
    D_{16}(s) &= L(s,\pi \times \pi')^6 \cdot L(s,\widetilde{\pi} \times \widetilde{\pi}')^6 \cdot L(s,\pi \times A^3(\pi'))^4 \cdot L(s,\widetilde\pi \times A^3(\widetilde{\pi}'))^4 \\
    &\cdot L(s,A^2(\pi'))^9 \cdot L(s,A^2(\pi) \times A^2(\pi'))^4 \cdot L(s,A^4(\pi))^4 \cdot L(s,A^4(\pi) \times A^2(\pi'))^4 \\
    &\cdot L(s,A^2(\pi))^9 \cdot L(s,A^2(\pi) \times A^2(\pi'))^5 \cdot L(s,A^2(\pi) \times A^4(\pi')),\\
    D(s)&= L(s,A^3(\pi) \times \pi')^6 \cdot L(s,A^3(\widetilde{\pi})\times \widetilde{\pi}')^6 \cdot D_{15}(s) \cdot D_{16}(s).
\end{aligned}
\end{equation}
The $D(s)$ in \eqref{eqn:GL4xGL2} matches $D(s)$ in Lemma~\ref{lem:D} with $c_{1,2,0} = 1$, $c_{1,0,0} =c_{2,1,0} = 2$, and $c_{j,k,r}=0$ otherwise.  If $\pi'$ is non-dihedral, then we apply Lemma~\ref{counting pole lemma} to see that $D_{16}(s)$ is entire, and $L(s,A^4(\pi'))$ is entire. If $\pi'$ is dihedral, then we use the assumption $\pi \not\sim \pi'$ and Lemma~\ref{symmetric power decomposition}(2) to see that $D_{16}(s)$ is again entire, and $L(s,A^4(\pi'))$ has at most a double pole at $s=1$. In either case, if $A^3(\pi)\neq A^3(\widetilde{\pi}')$ (i.e., $L(s,A^3(\pi) \times A^3(\pi'))$ is entire), then $L(s,A^3(\pi) \times \pi')$ has \textcolor{red}{no exceptional zero} by Proposition~\ref{prop:strategy} applied to $D(s)$ in \eqref{eqn:GL4xGL2}, with $\ell_1 = \ell_2 = 6$ and $k \leq 11$. 

\subsection{The twist-inequivalent reduced case}
We continue to assume that $\pi \not\sim \pi'$, but we now assume that $A^3(\pi)$ is not cuspidal. We have two cases to consider.
\subsubsection{$\pi$ is dihedral}
By Lemma~\ref{symmetric power decomposition}(2), there exist dihedral representations $\nu_1,\nu_2\in\mathfrak{F}_2$ such that $L(s,A^3(\pi) \times \pi')$ factors as $L(s,\nu_1\times\pi')\cdot L(s,\nu_2\times\pi')$.  Applying Proposition~\ref{prop:list}(4) to each factor, we conclude that any exceptional zero of $L(s,A^3(\pi) \times \pi')$ is a \textcolor{blue}{zero of a self-dual abelian factor}, with  \textcolor{red}{no exceptional zero} when $\pi'$ is non-dihedral.
\subsubsection{$\pi$ is tetrahedral by $\mu$}
By Lemma~\ref{symmetric power decomposition}(3a), we have that
\[
L(s,A^3(\pi) \times \pi') = L(s,\pi \times (\pi' \otimes \mu)) \cdot L(s,\pi \times (\pi' \otimes \mu^2)).
\]
Since $\pi$ is non-dihedral and $\pi\not\sim\pi'$, it follows from Proposition~\ref{prop:list}(4) that $L(s,A^3(\pi) \times \pi')$ has \textcolor{red}{no exceptional zero}.

\subsection{The twist-equivalent case}
If $\psi\in\mathfrak{F}_1$ satisfies $\pi' = \pi \otimes \psi$, then
\[
    L(s,A^3(\pi) \times \pi') = L(s,A^3(\pi) \times (\pi \otimes \eta)) = L(s,A^4(\pi) \otimes \omega_\pi\psi) \cdot L(s,A^2(\pi) \otimes \omega_\pi\psi)
\]
by Lemma~\ref{lem:CG}.  By Theorem~\ref{symmetric fourth no Siegel zero} applied to the first factor and Proposition~\ref{prop:list}(2) applied to the second factor, any exceptional zero of $L(s,A^3(\pi) \times \pi')$ is a \textcolor{blue}{zero of a self-dual abelian factor}.

\section{Proof of Theorem~\ref{thm:main}(3)}
\label{sec:3}

Let $\chi\in\mathfrak{F}_1$, and let $\pi,\pi'\in\mathfrak{F}_2$ satisfy $\pi\not\sim\pi'$.  In this section, we prove the following result.

\begin{theorem}
\label{thm:GL5xGL3}
Let $\pi,\pi'\in\mathfrak{F}_2$ and $\chi\in\mathfrak{F}_1$.  Suppose that $\pi\not\sim\pi'$.  If $\chi^2\neq\mathbbm{1}$ or $A^4(\pi)\neq A^4(\pi')$, then any exceptional zero of $L(s,A^4(\pi)\times(A^2(\pi')\otimes\chi))$ is a zero of a self-dual abelian factor.  No such factor exists when $\pi'$ is non-dihedral.
\end{theorem}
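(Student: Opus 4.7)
The plan is to follow the case-splitting template of Sections~\ref{sec:1a}--\ref{sec:2}: handle the twist-inequivalent reduced case (when $\pi$ is dihedral, tetrahedral, or octahedral, so $A^4(\pi)$ is non-cuspidal) by factoring $L(s, A^4(\pi) \times (A^2(\pi') \otimes \chi))$ via Lemma~\ref{symmetric power decomposition}, and treat the twist-inequivalent general case (when $\pi$ is not of solvable polyhedral type) by constructing $D(s)$ via Lemma~\ref{lem:D} and applying Proposition~\ref{prop:strategy}.

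In the reduced case, Lemma~\ref{symmetric power decomposition}(2,3) expresses $A^4(\pi)$ as an isobaric sum of representations in $\mathfrak{F}_1 \cup \mathfrak{F}_2$, reducing the target to a product of Rankin--Selberg $L$-functions of type $\GL_m \times \GL_3$ for $m \leq 2$. Each factor is controlled by Proposition~\ref{prop:list}(2,6) and Theorem~\ref{thm:GL3xGL3} when $\pi'$ is non-dihedral (i.e., $A^2(\pi') \in \mathfrak{F}_3$), giving no exceptional zero. When $\pi'$ is dihedral, further decomposing $A^2(\pi') \otimes \chi$ via Lemma~\ref{symmetric power decomposition}(2) into characters and dihedral $\GL_2$-constituents yields factors whose exceptional zeros, if any, are zeros of self-dual abelian $L$-factors.

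For the general case, so that $A^4(\pi) \in \mathfrak{F}_5$ and $A^2(\pi') \in \mathfrak{F}_3$, every pairing in Lemma~\ref{lem:D} producing the target requires $(j, j', a) = (2, 2, 0)$, so all nonzero coefficients $c_{j, k, r}$ must have $j = 2$. I would adopt a $\chi$-twisted analogue of the Section~\ref{sec:1c} choice, trying e.g.\ $c_{2, 1, 0} = c_{2, 1, 1} = 2$ together with $c_{2, 0, 0} = c_{2, 2, 0} = c_{2, 0, 1} = c_{2, 2, 1} = 1$. Under $\pi \not\sim \pi'$ with both non-dihedral, Theorem~\ref{thm:Langlands}(3) gives $A^2(\pi) \not\sim A^2(\pi')$, eliminating poles of $L(s, A^2(\pi) \times A^2(\pi') \otimes \chi^t)$ for $t \in \{-1, 0, 1\}$, while the hypothesis $\chi^2 \neq \mathbbm{1}$ or $A^4(\pi) \neq A^4(\pi')$ precludes simultaneous poles of $L(s, A^4(\pi) \times A^4(\pi') \otimes \chi^t)$ for $t \in \{-1, 0, 1\}$ at $s = 1$.

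The principal obstacle is arranging the combined multiplicity of the target and its $\bar{\chi}$-twisted counterpart in $D(s)$ to strictly exceed the pole order. Because $j = j' = 2$ is forced, each coefficient $c_{2, k, r}$ contributes $c_{2, k, r}^2$ to the $\zeta_F$-multiplicity, and the coefficients $c_{2, 2, r}$ additionally contribute $c_{2, 2, r}^2$ to the multiplicity of $L(s, A^4(\pi) \times A^4(\pi'))$; symmetric coefficient choices within Lemma~\ref{lem:D} alone typically yield equality rather than strict inequality. To generate the needed surplus, I would supplement $D(s)$ with an auxiliary factor built from the cuspidal $\GL_6$ representation $\pi \boxtimes A^2(\pi') \in \mathfrak{F}_6$ provided by Theorem~\ref{thm:Langlands}(3), in the spirit of Sections~\ref{sec:1e} and~\ref{sec:1f}, thereby generating extra target pairings via Clebsch--Gordan cross terms without the problematic $\zeta_F$ or $A^4 \times A^4$ contributions. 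Once $\ell_1 + \ell_2 > k$ is secured, Proposition~\ref{prop:strategy} yields the desired standard zero-free region and completes the proof.
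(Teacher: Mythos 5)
Your reduced-case outline is essentially the paper's (though note that when $A^4(\pi)$ is cuspidal and $\pi'$ is dihedral, the factors $L(s,A^4(\pi)\otimes\eta\chi)$ and $L(s,A^4(\pi)\times(I_K^F(\xi\xi'^{-1})\otimes\chi))$ require Theorems~\ref{symmetric fourth no Siegel zero} and~\ref{thm:GL5xGL2}, not just Proposition~\ref{prop:list}), but the general case has a genuine gap. The paper does not attack arbitrary $\chi$ with a single construction from Lemma~\ref{lem:D}. It first splits on self-duality: if $A^2(\pi')\otimes\chi$ is not self-dual, Proposition~\ref{prop:list}(8) already finishes, since $A^4(\pi)\in\mathfrak{F}_5$ is self-dual; if it is self-dual, then $A^2(\pi')\otimes\chi^2=A^2(\pi')$ forces $\chi^6=\mathbbm{1}$, and the argument splits on the order of $\chi$. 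For $\chi=\mathbbm{1}$ or $\chi$ quadratic, the hypothesis forces $A^4(\pi)\neq A^4(\pi')$, and the asymmetric choices $c_{2,1,0}=2$, $c_{2,0,0}=c_{2,2,0}=1$ (resp.\ $c_{2,2,0}=2$, $c_{2,0,1}=c_{2,2,1}=1$), which exploit $\chi=\bar{\chi}$, give $\ell_1=7$ (resp.\ $8$) against $k=6$. For $\chi^2\neq\mathbbm{1}$ no auxiliary $D(s)$ is used at all: $A^2(\pi')\otimes\chi$ has a self-twist by the nontrivial character $\chi^2$ while $A^4(\pi)$ does not (compare central characters), and Theorem~\ref{thm:main3} applies. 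Your plan has no counterpart to any of these reductions, and this is not merely an efficiency issue: when $A^4(\pi)=A^4(\pi')$ and $\chi^2\neq\mathbbm{1}$ (allowed by the hypothesis), every admissible $D(s)$ built as in Lemma~\ref{lem:D} whose target multiplicity is positive either contains $L(s,A^4(\pi)\times A^4(\pi'))$, which then genuinely has a pole, or (dropping the $c_{2,2,r}$ blocks) has target multiplicity bounded by the $\zeta_F$-multiplicity via $2c_{2,1,0}c_{2,1,1}\leq c_{2,1,0}^2+c_{2,1,1}^2$; so $\ell_1+\ell_2>k$ is unattainable and some replacement for Theorem~\ref{thm:main3} is indispensable.

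Your proposed repair does not close this. As you note, your symmetric choice gives $\ell_1+\ell_2=12$ while the diagonal self-pairings alone already contribute $\sum_{k,r}c_{2,k,r}^2=12$ copies of $\zeta_F(s)$, so Proposition~\ref{prop:strategy} fails. Adjoining the block $\pi\boxtimes A^2(\pi')\in\mathfrak{F}_6$ cannot create the needed surplus: by Clebsch--Gordan, its cross terms against your $A^2(\pi)\otimes A^k(\pi')\otimes\chi^r$ blocks have $\pi$-part $\Sym^2\otimes\Sym^1=\Sym^3\oplus\Sym^1$, so they produce only $A^3(\pi)$- and $A^1(\pi)$-type factors and never $L(s,A^4(\pi)\times(A^2(\pi')\otimes\chi))$, while its self-pairing adds at least one more $\zeta_F$; positivity forces the full square, so these diagonal costs are unavoidable. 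To manufacture the target from a cross term with a $\Sym^1(\pi)$ block you would need $\Sym^3(\pi)$ blocks, which reintroduce possible poles of $L(s,A^3(\pi)\times(A^3(\pi')\otimes\cdot))$ (Lemma~\ref{counting pole lemma}(1)) together with their own $\zeta_F$ diagonal, or $\Sym^5(\pi)$, which is not available. Finally, your reading of the hypothesis is off: if $A^4(\pi)=A^4(\pi')$ and $\chi^2\neq\mathbbm{1}$, the untwisted factor $L(s,A^4(\pi)\times A^4(\pi'))$ does have a pole, so the hypothesis does not ``preclude'' the poles your construction must avoid; it is only in the cases $\chi=\mathbbm{1}$ or $\chi^2=\mathbbm{1}$, reached after the self-duality reduction, that it guarantees $A^4(\pi)\neq A^4(\pi')$ and hence entirety of that factor.
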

\begin{remark}
If there exists $\psi\in\mathfrak{F}_1$ such that $\pi'=\pi\otimes\psi$, then $L(s,A^4(\pi)\times(A^2(\pi')\otimes\chi))=L(s,A^4(\pi)\times(A^2(\pi)\otimes\chi))$.  When $\pi$ is of solvable polyhedral type, this factors according to Lemma~\ref{symmetric power decomposition}, and any exceptional zero is a zero of a self-dual abelian factor.  If $\pi$ is not of solvable polyhedral type, then we have the factorization
\begin{align*}
L(s,A^4(\pi)\times(A^2(\pi')\otimes\chi))
&=L(s,A^2(\pi)\otimes\chi)\cdot L(s,A^4(\pi)\otimes\chi)\cdot L(s,\pi,\Sym^6\otimes\chi\bar{\omega}_{\pi}^3),
\end{align*}
which is related to Corollary~\ref{cor:sym2}.  Outside of the context of Proposition~\ref{prop:Thorner}, we cannot yet preclude the existence of exceptional zeros for $L(s,\pi,\Sym^6\otimes\chi\bar{\omega}_{\pi}^3)$.
\end{remark}
\begin{proof}[Proof of Theorem~\ref{thm:main}(3)]
In Theorem~\ref{thm:GL5xGL3}, replace $\chi$ with $\chi\omega_{\pi}^2\omega_{\pi'}$.
\end{proof}

\subsection{The twist-inequivalent general case}

Suppose that $\pi \not\sim \pi'$, $A^2(\pi')\in\mathfrak{F}_3$, and  $A^4(\pi)\in\mathfrak{F}_5$.  Since $A^4(\pi)$ is self-dual, we consider two cases.

\subsubsection{$A^2(\pi') \otimes \chi$ is not self-dual}

In this case, $L(s,A^4(\pi) \times (A^2(\pi') \otimes \chi))$ has \textcolor{red}{no exceptional zero} by Proposition~\ref{prop:list}(8). 

\subsubsection{$A^2(\pi') \otimes \chi$ is self-dual}
\label{A^2(pi')otimeschiisself-dual}

The self-duality of $A^2(\pi')$ and $A^2(\pi') \otimes \chi$ imply that
\begin{equation}
\label{eqn:A2_twist_SD}
A^2(\pi') \otimes \chi^2 = A^2(\pi').
\end{equation}
By comparing central characters, we conclude that $\chi^6=\mathbbm{1}$.

{\it Case \ref{A^2(pi')otimeschiisself-dual}a: $\chi=\mathbbm{1}$}.  Define
\begin{equation}
\label{eqn:GL5xGL3}
\begin{aligned}
    D_{17}(s) &= \zeta_F(s)^6 \cdot L(s,A^4(\pi) \times A^4(\pi')) \\
    D_{18}(s) &= L(s,A^2(\pi))^6\cdot L(s,A^4(\pi))^6\cdot L(s,\pi')^4 \cdot L(s,\widetilde{\pi}')^4 \cdot L(s,A^2(\pi'))^7\cdot L(s,A^3(\pi'))^2\\
    & \cdot L(s,A^3(\widetilde{\pi}'))^2 \cdot L(s,A^4(\pi')) \cdot L(s,A^2(\pi) \times \pi')^4 \cdot L(s,A^2(\pi) \times \widetilde{\pi}')^4\\
    &\cdot L(s,A^4(\pi) \times \pi')^4 \cdot L(s,A^4(\pi) \times \widetilde{\pi}')^4 \cdot L(s,A^2(\pi) \times A^2(\pi'))^7\cdot L(s,A^2(\pi) \times A^4(\pi'))\\
    &\cdot L(s,A^4(\pi) \times A^3(\pi'))^2 \cdot L(s,A^4(\pi) \times A^3(\widetilde{\pi}'))^2 \\ 
    &\cdot L(s,A^2(\pi) \times A^3(\pi'))^2 \cdot L(s,A^2(\pi) \times A^3(\widetilde{\pi}'))^2,  \\
    D(s) &= L(s,A^4(\pi) \times A^2(\pi'))^7 \cdot D_{17}(s) \cdot D_{18}(s).
\end{aligned}
\end{equation}
The $D(s)$ in \eqref{eqn:GL5xGL3} matches $D(s)$ in Lemma~\ref{lem:D} with $c_{2,1,0} = 2$, $c_{2,0,0}=c_{2,2,0} = 1$, and $c_{j,k,r}=0$ otherwise.  By Lemma~\ref{counting pole lemma}, $D_{18}(s)$ is entire. If $A^4(\pi)\neq A^4(\pi')$ (i.e., $L(s,A^4(\pi) \times A^4(\pi'))$ is entire), then $D_{17}(s)$ has a pole of order $6$ at $s=1$.  It follows that $L(s,A^4(\pi) \times A^2(\pi'))$ \textcolor{red}{has no exceptional zero} by Proposition~\ref{prop:strategy} applied to $D(s)$ in \eqref{eqn:GL5xGL3}, with $\ell_1 = 7$, $\ell_2 = 0$, and $k = 6$.

{\it Case \ref{A^2(pi')otimeschiisself-dual}b: $\chi\neq \mathbbm{1}$ and $\chi^2=\mathbbm{1}$}.  Define
\begin{equation}
\label{eqn:GL5xGL3_2}
\begin{aligned}
    D_{19}(s) &= \zeta_F(s)^6 \cdot L(s,A^4(\pi)\times A^4(\pi'))^5\cdot L(s,A^4(\pi)\times(A^4(\pi')\otimes\chi))^4,\\
    D_{20}(s) &= L(s,\chi)^4\cdot L(s,A^2(\pi))^6\cdot L(s,A^2(\pi)\otimes\chi)^4\cdot L(s,A^4(\pi))^6\cdot L(s,A^4(\pi)\otimes\chi)^4\\
    &\cdot L(s,A^2(\pi'))^7\cdot L(s,A^2(\pi')\otimes\chi)^8\cdot L(s,A^4(\pi'))^5\cdot L(s,A^4(\pi')\otimes\chi)^4\\
    &\cdot L(s,A^2(\pi)\times A^2(\pi'))^7\cdot L(s,A^2(\pi)\times(A^2(\pi')\otimes\chi))^8\cdot L(s,A^2(\pi)\times A^4(\pi'))^5\\
    &\cdot L(s,A^2(\pi)\times(A^4(\pi')\otimes\chi))^4\cdot L(s,A^4(\pi)\times A^2(\pi'))^7,\\
D(s)&=L(s,A^4(\pi)\times (A^2(\pi')\otimes\chi))^8\cdot D_{19}(s)\cdot D_{20}(s).
\end{aligned}
\end{equation}
The $D(s)$ in \eqref{eqn:GL5xGL3_2} matches $D(s)$ in Lemma~\ref{lem:D} with $c_{2,2,0} = 2$, $c_{2,0,1} =c_{2,2,1} = 1$, and $c_{j,k,r}=0$ otherwise.  By Lemma~\ref{counting pole lemma}, $D_{20}(s)$ is entire.

The central characters of $A^4(\pi)$ and $A^4(\pi')\otimes\chi$ are $\mathbbm{1}$ and $\chi^5=\chi$, respectively.  Since $\chi\neq\mathbbm{1}$ by hypothesis, we conclude that $A^4(\pi)\neq A^4(\pi')\otimes\chi$, which implies that $L(s,A^4(\pi)\times(A^4(\pi')\otimes\chi))$ is entire.  Therefore, if $A^4(\pi)\neq A^4(\pi')$, then $L(s,A^4(\pi)\times A^4(\pi'))$ is entire, and  $D_{19}(s)$ has a pole of order $6$ at $s=1$.  We conclude that $L(s,A^4(\pi) \times (A^2(\pi') \otimes \chi))$ has \textcolor{red}{no exceptional zero} by Proposition~\ref{prop:strategy} applied to $D(s)$ in \eqref{eqn:GL5xGL3_2}, with $\ell_1=8$, $\ell_2=0$, and $k=6$.

{\it Case \ref{A^2(pi')otimeschiisself-dual}c:  $\chi^2 \neq \mathbbm{1}, \chi^6 = \mathbbm{1}$.} 
Twisting both sides of \eqref{eqn:A2_twist_SD} by $\chi$, we conclude that $A^2(\pi') \otimes \chi  = (A^2(\pi') \otimes \chi) \otimes  \chi^2$, where $\chi^2$ is non-trivial.  Since the central characters of $A^4(\pi)$ and $A^4(\pi)\otimes\chi^2$ are $\mathbbm{1}$ and $\chi^{10}$, respectively, our conditions on $\chi$ ensure that $A^4(\pi)\neq A^4(\pi)\otimes\chi^2$.  Therefore, $A^2(\pi')\otimes\chi$ has a self-twist by $\chi^2$, but $A^4(\pi)$ does not, in which case $L(s,A^4(\pi)\times(A^2(\pi')\otimes\chi))$ has \textcolor{red}{no exceptional zero} by Theorem~\ref{thm:main3}.

\subsection{The twist-inequivalent reduced case}
We continue to assume that $\pi \not\sim \pi'$, but we now assume that $A^4(\pi)$ or $A^2(\pi')$ is non-cuspidal. This introduces four cases.
\subsubsection{$\pi'$ is dihedral by $(\eta,\xi,K)$}
By Lemma~\ref{symmetric power decomposition}(2), $L(s,A^4(\pi) \times (A^2(\pi') \otimes \chi))$ decomposes as
\[
L(s,A^4(\pi) \times (A^2(\pi') \otimes \chi)) = L(s,A^4(\pi) \times (I_K^F(\xi{\xi'}^{-1}) \otimes \chi)) \cdot L(s,A^4(\pi) \otimes \eta\chi).
\]
By Theorems~\ref{symmetric fourth no Siegel zero}~and~\ref{thm:GL5xGL2}, any exceptional zero of the factors on the right-hand side is a \textcolor{blue}{zero of a self-dual abelian factor}.
\subsubsection{$\pi'$ is non-dihedral, $\pi$ is dihedral}
By Lemma~\ref{symmetric power decomposition}(2), $L(s,A^4(\pi) \times (A^2(\pi') \otimes \chi))$ factors as a product of $L$-functions of the form $L(s,A^2(\pi') \otimes \eta\chi)$ or $L(s,\nu \times (A^2(\pi') \otimes \chi))$, where $\nu \in \fF_2$ is dihedral and $\eta \in \fF_1$.  By Proposition~\ref{prop:list}(2,6), each factor has \textcolor{red}{no exceptional zero}.
\subsubsection{$\pi'$ is non-dihedral, $\pi$ is tetrahedral by $\mu$}
By Lemma~\ref{symmetric power decomposition}(3a), we have the factorization
\[
L(s,A^4(\pi) \times (A^2(\pi') \otimes \chi)) = L(s,A^2(\pi) \times (A^2(\pi') \otimes \chi)) \cdot L(s,A^2(\pi') \otimes \mu\chi) \cdot L(s,A^2(\pi') \otimes \mu^2\chi).
\]
Since $\pi \not\sim \pi'$, Theorem~\ref{thm:Langlands}(3) implies that $A^2(\pi') \not\sim A^2(\pi)$.  Applying Theorem~\ref{thm:GL3xGL3} to the first factor and Proposition~\ref{prop:list}(2) to the others, we conclude that $L(s,A^4(\pi) \times (A^2(\pi') \otimes \chi))$ has 
\textcolor{red}{no exceptional zero}.
\subsubsection{$\pi'$ is non-dihedral, $\pi$ is octahedral by $\eta$}
By Lemma~\ref{symmetric power decomposition}(3b), there exists a dihedral $\nu \in \fF_2$ such that $L(s,A^4(\pi) \times (A^2(\pi') \otimes \chi)) = L(s,A^2(\pi) \times (A^2(\pi') \otimes \eta\chi)) \cdot L(s,\nu \times (A^2(\pi') \otimes \chi))$.  Applying Theorem~\ref{thm:GL3xGL3} to the first factor and Proposition~\ref{prop:list}(6) to the second factor, we conclude that $L(s,A^4(\pi) \times (A^2(\pi') \otimes \chi))$ \textcolor{red}{has no exceptional zero}.

\bibliographystyle{abbrv}
\bibliography{ThornerZhao}

\begin{thebibliography}{10}

\bibitem{AC}
J.~Arthur and L.~Clozel.
\newblock {\em Simple algebras, base change, and the advanced theory of the
  trace formula}, volume 120 of {\em Annals of Mathematics Studies}.
\newblock Princeton University Press, Princeton, NJ, 1989.

\bibitem{Banks}
W.~D. Banks.
\newblock Twisted symmetric-square {$L$}-functions and the nonexistence of
  {S}iegel zeros on {$\mathrm{GL}(3)$}.
\newblock {\em Duke Math. J.}, 87(2):343--353, 1997.

\bibitem{Brumley}
F.~Brumley.
\newblock Effective multiplicity one on $\mathrm{GL}_n$ and narrow zero-free
  regions for {R}ankin-{S}elberg {$L$}-functions.
\newblock {\em Amer. J. Math.}, 128(6):1455--1474, 2006.

\bibitem{BushnellHenniart}
C.~J. Bushnell and G.~Henniart.
\newblock An upper bound on conductors for pairs.
\newblock {\em J. Number Theory}, 65(2):183--196, 1997.

\bibitem{GJ}
S.~Gelbart and H.~Jacquet.
\newblock A relation between automorphic representations of {$\mathrm{GL}(2)$}
  and {$\mathrm{GL}(3)$}.
\newblock {\em Ann. Sci. \'{E}cole Norm. Sup. (4)}, 11(4):471--542, 1978.

\bibitem{GodementJacquet}
R.~Godement and H.~Jacquet.
\newblock {\em Zeta functions of simple algebras}.
\newblock Lecture Notes in Mathematics, Vol. 260. Springer-Verlag, Berlin-New
  York, 1972.

\bibitem{HarcosThorner}
G.~{Harcos} and J.~{Thorner}.
\newblock {A new zero-free region for Rankin-Selberg $L$-functions}.
\newblock {\em J. Reine Angew. Math.}
\newblock Accepted for publication.

\bibitem{HarcosThorner2}
G.~{Harcos} and J.~{Thorner}.
\newblock {Tatuzawa's theorem for Rankin-Selberg $L$-functions}.
\newblock {\em arXiv e-prints}, page arXiv:2508.10844, Aug. 2025.

\bibitem{HoffsteinLockhart}
J.~Hoffstein and P.~Lockhart.
\newblock Coefficients of {M}aass forms and the {S}iegel zero.
\newblock {\em Ann. of Math. (2)}, 140(1):161--181, 1994.
\newblock Appendix by Dorian Goldfeld, Hoffstein and Daniel Lieman.

\bibitem{HoffsteinRamakrishnan}
J.~Hoffstein and D.~Ramakrishnan.
\newblock Siegel zeros and cusp forms.
\newblock {\em Int. Math. Res. Not.}, (6):279--308, 1995.

\bibitem{Humphries}
P.~Humphries and F.~Brumley.
\newblock Standard zero-free regions for {R}ankin-{S}elberg {$L$}-functions via
  sieve theory.
\newblock {\em Math. Z.}, 292(3-4):1105--1122, 2019.

\bibitem{HumphriesThorner}
P.~Humphries and J.~Thorner.
\newblock Towards a $\mathrm{GL}_n$ variant of the {H}oheisel phenomenon.
\newblock {\em Trans. Amer. Math. Soc.}, 375(3):1801--1824, 2022.

\bibitem{IK}
H.~Iwaniec and E.~Kowalski.
\newblock {\em Analytic number theory}, volume~53 of {\em American Mathematical
  Society Colloquium Publications}.
\newblock American Mathematical Society, Providence, RI, 2004.

\bibitem{JPSS}
H.~Jacquet, I.~I. Piatetskii-Shapiro, and J.~A. Shalika.
\newblock Rankin-{S}elberg convolutions.
\newblock {\em Amer. J. Math.}, 105(2):367--464, 1983.

\bibitem{JS}
H.~Jacquet and J.~A. Shalika.
\newblock A non-vanishing theorem for zeta functions of {$\mathrm{GL}_{n}$}.
\newblock {\em Invent. Math.}, 38(1):1--16, 1976/77.

\bibitem{Kim}
H.~H. Kim.
\newblock Functoriality for the exterior square of {$\mathrm{GL}_4$} and the
  symmetric fourth of {$\mathrm{GL}_2$}.
\newblock {\em J. Amer. Math. Soc.}, 16(1):139--183, 2003.
\newblock With appendix 1 by Dinakar Ramakrishnan and appendix 2 by Kim and
  Peter Sarnak.

\bibitem{KimShahidi2}
H.~H. Kim and F.~Shahidi.
\newblock Cuspidality of symmetric powers with applications.
\newblock {\em Duke Math. J.}, 112(1):177--197, 2002.

\bibitem{KimShahidi}
H.~H. Kim and F.~Shahidi.
\newblock Functorial products for $\mathrm{GL}_2\times\mathrm{GL}_3$ and the
  symmetric cube for $\mathrm{GL}_2$.
\newblock {\em Ann. of Math. (2)}, 155(3):837--893, 2002.
\newblock Appendix by Colin J. Bushnell and Guy Henniart.

\bibitem{Lapid}
E.~Lapid.
\newblock On the {H}arish-{C}handra {S}chwartz space of {$G(F)\backslash
  G(\mathbb{A})$}.
\newblock In {\em Automorphic representations and {$L$}-functions}, volume~22
  of {\em Tata Inst. Fundam. Res. Stud. Math.}, pages 335--377. Tata Inst.
  Fund. Res., Mumbai, 2013.
\newblock Appendix by Farrell Brumley.

\bibitem{Luo}
W.~Luo.
\newblock Non-existence of {S}iegel zeros for cuspidal functorial products on
  {$GL(2) \times GL(3)$}.
\newblock {\em Proc. Amer. Math. Soc.}, 151(5):1915--1919, 2023.

\bibitem{LRS}
W.~Luo, Z.~Rudnick, and P.~Sarnak.
\newblock On the generalized {R}amanujan conjecture for {$\mathrm{GL}(n)$}.
\newblock In {\em Automorphic forms, automorphic representations, and
  arithmetic ({F}ort {W}orth, {TX}, 1996)}, volume~66 of {\em Proc. Sympos.
  Pure Math.}, pages 301--310. Amer. Math. Soc., Providence, RI, 1999.

\bibitem{MoeglinWaldspurger}
C.~M\oe~glin and J.-L. Waldspurger.
\newblock Le spectre r\'{e}siduel de {$\mathrm{GL}(n)$}.
\newblock {\em Ann. Sci. \'{E}cole Norm. Sup. (4)}, 22(4):605--674, 1989.

\bibitem{MullerSpeh}
W.~M\"{u}ller and B.~Speh.
\newblock Absolute convergence of the spectral side of the {A}rthur trace
  formula for $\mathrm{GL}_n$.
\newblock {\em Geom. Funct. Anal.}, 14(1):58--93, 2004.
\newblock Appendix by E. M. Lapid.

\bibitem{NewtonThorne3}
J.~{Newton} and J.~A. {Thorne}.
\newblock {Symmetric power functoriality for Hilbert modular forms}.
\newblock {\em Ann. of Math. (2)}.
\newblock Accepted for publication.

\bibitem{NewtonThorne}
J.~Newton and J.~A. Thorne.
\newblock Symmetric power functoriality for holomorphic modular forms.
\newblock {\em Publ. Math. Inst. Hautes \'{E}tudes Sci.}, 134:1--116, 2021.

\bibitem{NewtonThorne2}
J.~Newton and J.~A. Thorne.
\newblock Symmetric power functoriality for holomorphic modular forms, {II}.
\newblock {\em Publ. Math. Inst. Hautes \'{E}tudes Sci.}, 134:117--152, 2021.

\bibitem{Ramakrishnan}
D.~Ramakrishnan.
\newblock Modularity of the {R}ankin-{S}elberg {$L$}-series, and multiplicity
  one for $\mathrm{SL}(2)$.
\newblock {\em Ann. of Math. (2)}, 152(1):45--111, 2000.

\bibitem{Ramakrishnan_exercise}
D.~Ramakrishnan.
\newblock An exercise concerning the selfdual cusp forms on {${\rm GL}(3)$}.
\newblock {\em Indian J. Pure Appl. Math.}, 45(5):777--785, 2014.

\bibitem{RamakrishnanWang}
D.~Ramakrishnan and S.~Wang.
\newblock On the exceptional zeros of {R}ankin-{S}elberg {$L$}-functions.
\newblock {\em Compositio Math.}, 135(2):211--244, 2003.

\bibitem{RamakrishnanWang2}
D.~Ramakrishnan and S.~Wang.
\newblock A cuspidality criterion for the functorial product on
  $\mathrm{GL}(2)\times\mathrm{GL}(3)$ with a cohomological application.
\newblock {\em Int. Math. Res. Not.}, (27):1355--1394, 2004.

\bibitem{RamakrishnanYang}
D.~Ramakrishnan and L.~Yang.
\newblock A constraint for twist equivalence of cusp forms on
  {$\mathrm{GL}(n)$}.
\newblock {\em Funct. Approx. Comment. Math.}, 65(1):105--117, 2021.

\bibitem{Shahidi}
F.~Shahidi.
\newblock On certain {$L$}-functions.
\newblock {\em Amer. J. Math.}, 103(2):297--355, 1981.

\bibitem{Takeda}
S.~Takeda.
\newblock On a certain metaplectic {E}isenstein series and the twisted
  symmetric square {$L$}-function.
\newblock {\em Math. Z.}, 281(1-2):103--157, 2015.

\bibitem{Thorner_Siegel}
J.~Thorner.
\newblock Exceptional zeros of {R}ankin--{S}elberg {$L$}-functions and joint
  {S}ato--{T}ate distributions.
\newblock {\em Int. Math. Res. Not. IMRN}, (20):rnaf307, 2025.

\bibitem{Wattanawanichkul}
N.~{Wattanawanichkul}.
\newblock {A metric approach to zero-free regions for $L$-functions}.
\newblock {\em arXiv e-prints}, page arXiv:2504.05606, Apr. 2025.

\end{thebibliography}

\end{document}